\def\emph{\alert}
\newcounter{quotethmcnt}
\def\equationautorefname~#1\null{(#1)}
\def\itemautorefname~#1\null{#1}
\newcommand{\mynewthm}[3][]{%
  \newaliascnt{#2}{thmnum}%
  \newtheorem{#2}[#2]{#3}%
  \aliascntresetthe{#2}%
  \newtheorem*{#2*}{#3}%
  \expandafter\newcommand\csname #2autorefname\endcsname{#3}%
  \expandafter\renewcommand\csname the#2\endcsname{\thethmnum}%
}
\newtheorem*{clm}{Claim}
\newenvironment{clmprf}{%
  \begin{proof}[Proof of claim]%
  }{\end{proof}}
\let\xxx=\frametitle
\def\frametitle#1{%
  \xxx{%
    \setbeamercolor*{math text}{use={titlelike,my math text},fg=titlelike.fg!80!my math text.fg}%
    #1}%
  \setbeamercolor{math text}{use=my math text,fg=my math text.fg}%
}
\newcommand{\beamerenv}[3]{%
\newenvironment<>{#1}%
{%
  \setbeamercolor{temp}{fg=structure.fg}%
  \setbeamercolor{structure}{fg=#2}%
  \setbeamercolor{block body}{use=structure,bg=structure.fg!5!white}%
  \begin{#3}%
}%
{\end{#3}\setbeamercolor{structure}{fg=temp.fg}}}
\newcommand{\mynewthm}[3][green!50!black]{%
  \newtheorem*{#2x}{#3}%
  \beamerenv{#2}{#1}{#2x}%
}
\newcommand{\myiffrench}[2]{#2}
\newcommand{\myiffrench}[2]{\iflanguage{french}{#1}{#2}}
\theoremstyle{plain}
\theoremstyle{definition}
\theoremstyle{remark}
\newcommand{\myenumlabel}[1]{\textnormal{(\roman{#1})}}
\newcounter{cycprfcnt}
\newcounter{cycprffirst}
\newcommand{\cycprfpreamble}[1]%
{%
  \setcounter{cycprfcnt}{1}
  \setcounter{cycprffirst}{#1}
  \setlength{\itemindent}{0.5\leftmargin}%
  \setlength{\leftmargin}{0pt}%
  \newcommand{\cpcurr}{\myenumlabel{cycprfcnt}}%
  \newcommand{\cpnext}{\addtocounter{cycprfcnt}{1}\cpcurr}%
  \newcommand{\cpprev}{\addtocounter{cycprfcnt}{-1}\cpcurr}%
  \newcommand{\cpnum}[1]{\setcounter{cycprfcnt}{##1}\cpcurr}%
  \newcommand{\cpfirst}{\cpnum{1}}%
  \newcommand{\impnext}{\cpcurr{} $\Longrightarrow$ \cpnext.}%
  \newcommand{\impprev}{\cpcurr{} $\Longrightarrow$ \cpprev.}%
  \newcommand{\impfirst}{\cpcurr{} $\Longrightarrow$ \cpfirst.}%
  \newcommand{\impnum}[2]{\cpnum{##1}{} $\Longrightarrow$ \cpnum{##2}.}%
  \def\makelabel##1{\ifnum\value{cycprffirst}=0\hspace{-0.7\itemindent}\setcounter{cycprffirst}{1}\fi##1}%
}%
\newenvironment{cycprf}[1][0]%
{\begin{list}{\impnext}{\cycprfpreamble{#1}}}%
{\qedhere\end{list}}%
\newenvironment{cycprf*}[1][0]%
{\begin{list}{\impnext}{\cycprfpreamble{#1}}}%
{\end{list}}%
\def\indsym#1#2{%
  \setbox0=\hbox{$\m@th#1x$}%
  \kern\wd0%
  \hbox to 0pt{\hss$\m@th#1\mid$\hbox to 0pt{$\m@th#1^{#2}$\hss}\hss}%
  \lower.9\ht0\hbox to 0pt{\hss$\m@th#1\smile$\hss}%
  \kern\wd0}
\def\nindsym#1#2{%
  \setbox0=\hbox{$\m@th#1x$}%
  \kern\wd0%
  \hbox to 0pt{\hss$\m@th#1\not$\kern1.4\wd0\hss}
  \hbox to 0pt{\hss$\m@th#1\mid$\hbox to 0pt{$\m@th#1^{#2}$\hss}\hss}%
  \lower.9\ht0\hbox to 0pt{\hss$\m@th#1\smile$\hss}%
  \kern\wd0}
\def\dotminussym#1#2{%
  \setbox0=\hbox{$\m@th#1-$}%
  \kern.5\wd0%
  \hbox to 0pt{\hss\hbox{$\m@th#1-$}\hss}%
  \raise.6\ht0\hbox to 0pt{\hss$\m@th#1.$\hss}%
  \kern.5\wd0}
\renewcommand{\emptyset}{\varnothing}
\renewcommand{\setminus}{\smallsetminus}
\newcommand{\rest}{{\restriction}}
\newcommand{\half}[1][1]{\tfrac{#1}{2}}
\DeclareMathOperator{\tp}{tp}
\newcommand{\qf}{\mathrm{qf}}
\DeclareMathOperator{\Th}{Th}
\DeclareMathOperator{\tS}{S}
\newcommand{\tSqf}{\tS^\qf}
\DeclareMathOperator{\diam}{diam}
\DeclareMathOperator{\co}{co}
\newcommand{\cco}{\overline\co}
\newcommand{\id}{\mathrm{id}}
\DeclareMathOperator{\Emb}{Emb}
\DeclareMathOperator{\Aut}{Aut}
\DeclareMathOperator{\dom}{dom}
\newcommand{\cE}{\mathcal{E}}
\newcommand{\sI}{\mathscr{I}}
\newcommand{\bG}{\mathbf{G}}
\newcommand{\bM}{\mathbf{M}}
\newcommand{\bN}{\mathbf{N}}
\newcommand{\bR}{\mathbf{R}}
\DeclareMathOperator{\epi}{epi}
\begin{document}

\title{Generic orbits and type isolation in the Gurarij space}

\author{Itaï \textsc{Ben Yaacov}}

\address{Itaï \textsc{Ben Yaacov} \\
  Université Claude Bernard -- Lyon 1 \\
  Institut Camille Jordan, CNRS UMR 5208 \\
  43 boulevard du 11 novembre 1918 \\
  69622 Villeurbanne Cedex \\
  France}

\urladdr{\url{http://math.univ-lyon1.fr/~begnac/}}

\thanks{Research supported by the Institut Universitaire de France, by ANR project GruPoLoCo (ANR-11-JS01-008) and by a grant from the Simons Foundation (\#202251 to the second author).}
\thanks{Part of the work was carried out during the Universality and Homogeneity programme at the Hausdorff Institute of the University of Bonn (Fall 2013).}

\author{C. Ward \textsc{Henson}}

\address{C. Ward \textsc{Henson} \\
  University of Illinois at Urbana-Champaign \\
  Urbana, Illinois 61801 \\
  USA}

\urladdr{\url{http://www.math.uiuc.edu/~henson/}}

\svnInfo $Id: Gurarij.tex 2822 2016-03-11 14:20:50Z begnac $
\thanks{\textit{Revision} {\svnInfoRevision} \textit{of} \today}

\keywords{Gurarij space ; Banach space ; isolated type ; atomic model ; prime model ; group action ; generic orbit}
\subjclass[2010]{46B04 ; 03C30 ; 03C50 ; 03C98}

\begin{abstract}
  We study the question of when the space of embeddings of a separable Banach space $E$ into the separable Gurarij space $\bG$ admits a generic orbit under the action of the linear isometry group of $\bG$.
  The question is recast in model-theoretic terms, namely type isolation and the existence of prime models.
  We characterise isolated types over $E$ using tools from convex analysis.
  We show that if the set of isolated types over $E$ is dense, then a dense $G_\delta$ orbit exists, and otherwise all orbits are meagre.
  We then study some (families of) examples with respect to this dichotomy.
  We also point out that the class of Gurarij spaces is the class of models of an $\aleph_0$-categorical theory with quantifier elimination, and calculate the density character of the space of types over $E$, answering a question of Avilés et al.
\end{abstract}

\maketitle

\tableofcontents

\section*{Introduction}

In 1966, Gurarij \cite{Gurarij:UniversalPlacement} defined what came to be known as the (separable) Gurarij space, and proved that it is almost isometrically unique.
The isometric uniqueness of the Gurarij space was proved in 1976 by Lusky \cite{Lusky:UniqueGurarij}.
In the same paper, Lusky points out that his arguments could be modified to prove also the isometric uniqueness of the separable Gurarij space equipped with a distinguished smooth unit vector (namely, a unit that admits a unique norming linear functional, see \autoref{dfn:NormingLinearFunctional}).
In other words, if $\bG$ denotes the separable Gurarij space, then the set of smooth unit vectors in $\bG$ forms an orbit under the action of the linear isometry group $\Aut(\bG)$.
By Mazur \cite{Mazur:Konvexe}, this orbit is moreover a dense $G_\delta$ subset of the unit sphere.

These facts are strongly reminiscent of familiar model theoretic phenomena, and, as we show in this paper, are indeed special cases thereof.
It was observed some time ago by the second author that the uniqueness of the Gurarij space can be accounted for by it being the unique separable model of an $\aleph_0$-categorical theory, which moreover eliminates quantifiers.
Similarly, the Gurarij space is atomic over a vector if and only if the latter is smooth, so Lusky's second uniqueness result is a special case of the uniqueness of the prime model.

These observations serve as a starting point for the present paper, whose goals are threefold:
\begin{itemize}
\item Make the observations above precise, and generalise them to uniqueness results over a subset other than the empty set or a singleton -- in other words, we study uniqueness and primeness of the Gurarij space over a subspace $E$ of dimension possibly greater than one.
  As we shall see, this requires us to (define and) characterise when types over $E$ are isolated.
\item Present in a manner accessible to non-logicians, and without making use of formal logic, some tools and techniques of model theory: types, type spaces, type isolation, the Tarski-Vaught Criterion, the Omitting Types Theorem, atomic models, and the primeness and uniqueness of atomic models.
\item Present to model theorists, who are familiar with the tools mentioned in the previous item in the context of classical logic, how these tools adapt to the metric setting.
\end{itemize}

\medskip

In \autoref{sec:QuantifierFreeTypes} we recall the definition of (quantifier-free) types and type spaces over a Banach space $E$, and study their properties.
The topometric structure of the type space, a fundamental notion of metric model theory, is defined there, as well as (topometrically) isolated types, which are among the main objects of study of this paper.

In \autoref{sec:Gurarij} we start studying Gurarij spaces.
At the technical level, we define and study Gurarij (and other) spaces that are atomic over a fixed separable parameter space $E$, and prove the Omitting Types Theorem (\autoref{thm:GurarijOmittingTypes}).
We prove appropriate generalisations of the homogeneity and universality properties of the Gurarij space to homogeneity and universality over $E$.
In particular, we show that the prime Gurarij spaces over $E$ (see \autoref{cor:PrimenessCriterion}) are those Gurarij spaces that are separable and atomic over $E$.
If a prime Gurarij space over $E$ exists then it is unique, up to an isometric isomorphism over $E$, and is denoted $\bG[E]$.
When $\bG[E]$ exists, the set of embeddings of $E$ in $\bG$ over which $\bG$ is prime forms a dense $G_\delta$ orbit among all embeddings of $E$; otherwise, all orbits are meagre.
We also give the standard model theoretic criterion for the existence of $\bG[E]$, namely that the isolated types over $E$ are dense.

While this (re-)development of model-theoretic tools is carried out in a fairly specific context, we present arguments that would be valid in the general case; these are sometimes followed by separate results that improve the general ones in the specific context of the Gurarij space.
The few results that do make explicit use of formal logic (essentially, \autoref{prp:BanachModelCompletion} and \autoref{thm:GurarijTheory}) serve mostly as parenthetical remarks required for the sake of completeness, and are not used in any way in other parts of the paper.

At this point we move on to the questions of when $\bG[E]$ exists and how to characterise isolated types in a fashion suitable to the Banach space context.
In \autoref{sec:IsolatedTypesDimensionOne} we consider the special case where $\dim E = 1$, giving a model-theoretic account of Lusky's result about smooth points in $\bG$.
Before considering the general case, we introduce an essential tool in \autoref{sec:LegendreFenchel}, namely the presentation of $1$-types as convex Katětov functions (as per \cite{BenYaacov:UniversalGurarijIsometryGroup}), and the Legendre-Fenchel transformation of those.
In \autoref{sec:IsolatedTypes} we characterise isolated $1$-types in terms of their Legendre-Fenchel conjugate, which allows us to give in \autoref{sec:Existence} some sufficient conditions for the existence of $\bG[E]$ for finite-dimensional spaces $E$ (e.g., smooth, polyhedral, or of dimension $\leq 3$ -- see \autoref{thm:IsolatedTypesFiniteDimension}), as well as examples when $\bG[E]$ does not exist.
The question of a satisfactory necessary and sufficient condition on $E$ for the existence of $\bG[E]$ remains an open problem.

We conclude in \autoref{sec:CountingTypes} with a ``counting types'' result, showing that the space of types over $E$ is metrically separable if and only if $E$ is finite-dimensional and polyhedral.
This allows us to answer a question of Avilés et al.\ \cite{Aviles-CabelloSanchez-Castillo-Gonzalez-Moreno:UniversalDisposition}.

\medskip

Throughout, $E$, $F$ and so on denote vector spaces over the real numbers -- normed spaces, most of the time, although the Legendre-Fenchel duality in \autoref{sec:LegendreFenchel} is stated for general locally convex spaces.
An embedding (or isomorphism, automorphism) of normed spaces is always isometric.

The topological dual of a normed space $E$ will be denoted $E^*$.
We shall often use the notation $B(E)$ for the closed unit ball of $E$ and $\partial B(E)$ for the unit sphere (which, regardless of topology, is the boundary of $B(E)$ in the sense of convex geometry), and similarly for $E^*$ instead of $E$.

\section{Quantifier-free types in Banach spaces}
\label{sec:QuantifierFreeTypes}

Before we start, let us state the following basic amalgamation result, which we shall use many times, quite often implicitly.

\begin{fct}
  \label{fct:BanachSpaceAmalgamation}
  For any three Banach spaces $E_0$, $F_0$ and $F_1$, and isometric embeddings $f_i\colon E_0 \rightarrow F_i$, there is a fourth Banach space $E_1$ and isometric embeddings $g_i\colon F_i \rightarrow E_1$ such that $g_0 f_0 = g_1 f_1$.
\end{fct}
\begin{proof}
  Equip the direct sum $F_0 \oplus F_1$ with the semi-norm $\|v+u\| = \inf_{w \in E_0} \|v + f_0 w\| + \|u - f_1 w\|$, divide by the kernel and complete.
\end{proof}

We can now define the fundamental objects of study of this section and, to a large extent, the entire paper.

\begin{dfn}
  \label{dfn:Type}
  Let $E$ be a Banach space and $X$ a sequence of distinct symbols (indexed by an arbitrary set $I$) that we call \emph{variables}.
  We let $E(X) = E \oplus \bigoplus_{x \in X} \bR x$, and define $\tS_X(E)$ to consist of all semi-norms on $E(X)$ that extend the norm on $E$, calling it the \emph{space of types in $X$ over $E$}.
  We denote members of $\tS_X(E)$ by $\xi$, $\zeta$ and so on, and the corresponding semi-norms by $\|{\cdot}\|^\xi$, $\|{\cdot}\|^\zeta$ and so on.

  When $X = \{x_i\}_{i \in I}$ we may also write $E(I) = E \oplus \bigoplus_{i \in I} \bR x_i$ instead of $E(X)$, and similarly $\tS_I(E)$, whose members are called $I$-types.
\end{dfn}

\begin{dfn}
  \label{dfn:TypeOfTuple}
  Given a Banach space extension $E \subseteq F$ and an $I$-sequence $\bar a = \{a_i\}_{i \in I} \subseteq F$, we define the \emph{type of $\bar a$ over $E$}, in symbols $\xi = \tp(\bar a/E) \in \tS_I(E)$, to be the semi-norm $\left\| b + \sum \lambda_i x_i \right\|^\xi = \left\| b + \sum \lambda_i a_i \right\|$, and say that $\bar a$ \emph{realises} $\xi$.
  When a sequence $\bar b$ generates $E$, we may also write $\tp(\bar a/\bar b)$ for $\tp(\bar a/E)$.

  Conversely, given a type $\xi \in \tS_I(E)$, we define the Banach space \emph{generated} by $\xi$, in symbols $E[\xi]$, as the space obtained from $\bigl( E(I), \|{\cdot}\|^\xi \bigr)$ by dividing by the kernel and completing, together with the distinguished generators $\{x_i\}_{i \in I} \subseteq E[\xi]$.
\end{dfn}

\begin{rmk}
  A model-theorist will recognise types as we define them here as \emph{quantifier-free} types, which do not, in general, capture ``all the pertinent information''.
  However, by \autoref{fct:BanachSpaceAmalgamation}, they do capture a maximal existential type.
  Moreover, it follows from \autoref{lem:VariableChange} below (and more specifically, from the assertion that $\pi_{\bar x}\colon \tS_{\bar x,y}(0) \rightarrow \tS_{\bar x}(0)$ is open) that being an existentially closed Banach space is an elementary property, so the theory of Banach spaces admits a model companion.
  Then \autoref{fct:BanachSpaceAmalgamation} can be understood to say that the model companion eliminates quantifiers, so quantifier-free types and types are in practice the same -- see \autoref{prp:BanachModelCompletion}.
  As we shall see later, the model companion is separably categorical, and its unique separable model is $\bG$, the separable Gurarij space.
\end{rmk}

\begin{dfn}
  \label{dfn:TypeSpaceTopometricStructure}
  We equip $\tS_I(E)$ with a topological structure as well as with a metric structure, \emph{which may be distinct}.
  The \emph{topology} on $\tS_I(E)$ is the least one in which, for every member $x \in E(I)$, the map $\hat x\colon \xi \mapsto \|x\|^\xi$ is continuous.
  Given $\xi,\zeta \in \tS_I(E)$, we define the \emph{distance} $d(\xi,\zeta)$ to be the infimum, over all $F$ extending $E$ and over all realisations $\bar a$ and $\bar b$ of $\xi$ and $\zeta$, respectively, of $\sup_i \|a_i - b_i\|$.
\end{dfn}

It is fairly clear that:
\begin{enumerate}
\item The distance on $\tS_I(E)$ refines the topology.
\item While the distance need not agree with the topology (we shall see that unless the parameter space $E$ is trivial and $I$ is finite, they are in fact distinct), it is lower semi-continuous.
\end{enumerate}
In other words, $\tS_I(E)$, equipped with this double structure, is a \emph{topometric space} in the sense of \cite{BenYaacov:TopometricSpacesAndPerturbations}.

\begin{lem}
  \label{lem:ApproximateTypeAmalgamation}
  Let $E,F$ be Banach spaces, $I$ an index set, and consider tuples $\bar a = (a_i)_{i\in I} \in E^I$, $\bar b \in F^I$ and $\bar \varepsilon \in \bR^I$.
  Also let $\bR^{(I)}$ denote the set of all $I$-tuples in which all but finitely many entries vanish.
  The following conditions are equivalent.
  \begin{enumerate}
  \item
    There exists a semi-norm $\|{\cdot}\|$ on $E \oplus F$ extending the respective norms of $E$ and $F$, such that for each $i \in I$ one has $\|a_i - b_i\| \leq \varepsilon_i$.
  \item
    For all $\bar r \in \bR^{(I)}$, one has
    \begin{gather*}
      \Biggl| \left\| \sum r_i a_i \right\| - \left\| \sum r_i b_i \right\| \Biggr| \leq \sum |r_i| \varepsilon_i.
    \end{gather*}
  \end{enumerate}
\end{lem}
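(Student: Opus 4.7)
The direction (1) implies (2) is routine: given a semi-norm on $E\oplus F$ as in (1), for any $\bar r \in \bR^{(I)}$ the triangle inequality gives
\begin{gather*}
  \left\| \sum r_i a_i \right\| = \left\| \sum r_i b_i + \sum r_i (a_i - b_i) \right\| \leq \left\| \sum r_i b_i \right\| + \sum |r_i| \varepsilon_i,
\end{gather*}
and exchanging the roles of $\bar a$ and $\bar b$ yields the reverse inequality, so (2) holds.

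For the converse, the plan is to construct an explicit candidate semi-norm on $E \oplus F$, guided by the infimum construction in \autoref{fct:BanachSpaceAmalgamation}, namely
\begin{gather*}
  \|(v,u)\| := \inf_{\bar r \in \bR^{(I)}} \left( \left\| v - \sum r_i a_i \right\|_E + \left\| u + \sum r_i b_i \right\|_F + \sum |r_i| \varepsilon_i \right).
\end{gather*}
Taking $\bar r = (\delta_{ij})_{j \in I}$ immediately gives $\|(a_i, -b_i)\| \leq \varepsilon_i$, which is the target estimate once $a_i$ and $b_i$ are identified with $(a_i, 0)$ and $(0, b_i)$, so the desired inequality is built in by design.

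The remaining verifications split into two parts. First, routine algebra shows that $\|{\cdot}\|$ is a semi-norm on $E \oplus F$: positive homogeneity follows from the substitution $\bar r \mapsto \lambda \bar r$, and subadditivity from combining near-optimal choices $\bar r_1, \bar r_2$ for two summands into $\bar r_1 + \bar r_2$. Second, and this is the heart of the argument, one must show that $\|{\cdot}\|$ restricts to the original norms on $E$ and on $F$. Taking $\bar r = 0$ gives the upper bounds $\|(v,0)\| \leq \|v\|_E$ and $\|(0,u)\| \leq \|u\|_F$ for free, so only the reverse inequalities demand work, and it is precisely here that condition (2) is invoked. Applying (2) in the form $\left\| \sum r_i b_i \right\|_F \geq \left\| \sum r_i a_i \right\|_E - \sum |r_i| \varepsilon_i$, one obtains
\begin{gather*}
  \left\| v - \sum r_i a_i \right\|_E + \left\| \sum r_i b_i \right\|_F + \sum |r_i| \varepsilon_i \geq \left\| v - \sum r_i a_i \right\|_E + \left\| \sum r_i a_i \right\|_E \geq \|v\|_E,
\end{gather*}
so $\|(v, 0)\| \geq \|v\|_E$; the estimate $\|(0, u)\| \geq \|u\|_F$ is entirely symmetric.

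I expect no serious obstacle; the only delicate moment is isolating the right candidate semi-norm, and once the formula above is chosen the rest is bookkeeping. Condition (2) is exactly the inequality needed to guarantee that the construction does not accidentally collapse the original norms on the two summands.
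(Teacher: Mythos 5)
Your proof is correct and takes essentially the same approach as the paper: both directions, and in particular the infimum formula defining the candidate semi-norm and the use of condition (2) to show it restricts to the given norms on $E$ and $F$, match the paper's argument. You spell out a few verifications the paper leaves implicit (the $\varepsilon_i$-estimate via $\bar r = (\delta_{ij})_j$, the semi-norm axioms), but there is no difference of substance.
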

\begin{proof}
  One direction being trivial, we prove the other.
  For $c + d \in E \oplus F$ define
  \begin{gather*}
    \|c + d\|' = \inf_{\bar r \in \bR^{(I)}} \left\| c - \sum r_i a_i \right\| + \left\| d + \sum r_i b_i \right\| + \sum |r_i|\varepsilon_i.
  \end{gather*}
  This is easily checked to be a semi-norm, with $\|c\|' \leq \|c\|$ for $c \in E$.
  Now, for $c \in E$ and $\bar r \in \bR^{(I)}$ we have
  \begin{gather*}
    \left\| c - \sum r_i a_i \right\| + \left\| \sum r_i b_i \right\| + \sum |r_i|\varepsilon_i \geq \left\| c - \sum r_i a_i \right\| + \left\| \sum r_i a_i \right\| \geq \|c\|.
  \end{gather*}
  Therefore $\|c\|' = \|c\|$, and similarly $\|d\|' = \|d\|$ for $d \in F$, concluding the proof.
\end{proof}

\begin{prp}
  \label{prp:TypeDistance}
  Let $\xi,\zeta \in \tS_I(E)$, and let $E(I)_1$ consist of all $a + \sum \lambda_i x_i \in E(I)$ (where $a \in E$ and all but finitely many of the $\lambda_i$ vanish) such that $\sum |\lambda_i| = 1$.
  Then
  \begin{gather*}
    d(\xi,\zeta) = \sup_{x \in E(I)_1} \left| \|x\|^\xi - \|x\|^\zeta \right|.
  \end{gather*}
  Moreover, the infimum in the definition of distance between types is attained.
\end{prp}
\begin{proof}
  Immediate from \autoref{lem:ApproximateTypeAmalgamation}.
\end{proof}

\begin{conv}
  \label{conv:Topometric}
  When referring to the topological or metric structure of $\tS_I(E)$, we shall follow the convention that unqualified terms from the vocabulary of general topology (open, compact, and so on) apply to the topological structure, while terms specific to metric spaces (bounded, complete, and so on) refer to the metric structure.

  Excluded from this convention is the notion of isolation which will be defined in a manner that takes into account both the topology and the distance.
\end{conv}

While this convention may seem confusing at first, it is quite convenient, as in the following.

\begin{lem}
  \label{lem:TypeSpaceTopologyMetric}
  \begin{enumerate}
  \item
    The space $\tS_I(E)$ is Hausdorff, and every closed and bounded set thereof is compact.
  \item
    The distance on $\tS_I(E)$ is lower semi-continuous.
    In particular, the closure of a bounded set is bounded.
  \item
    Assume that $I$ is finite, say $I = n = \{0,1,\ldots,n-1\} \in \bN$.
    Then every bounded set in $\tS_n(E)$ is contained in an open bounded set.
    It follows that the space $\tS_n(E)$ is locally compact, and that a compact subset is necessarily (closed and) bounded.
  \item
    A subset $X \subseteq \tS_n(E)$ is closed if and only if its intersection with every compact set is compact.
  \end{enumerate}
\end{lem}
\begin{proof}
  For the first item, clearly $\tS_I(E)$ is Hausdorff.
  If $X \subseteq \tS_I(E)$ is bounded, then for every $x \in E(I)$ there exists $M_x$ such that $\|x\|^\xi \leq M_x$ for all $\xi \in X$.
  We can therefore identify $X$ with a subset of $Y = \prod_x [0,M_x]$, and if $X$ is closed in $\tS_I(E)$, then it is closed in $Y$ and therefore compact.

  The second item follows from \autoref{prp:TypeDistance}, and the third is immediate.

  For the fourth item, assume that $X \subseteq \tS_n(E)$ is not closed, let $\xi \in \overline X \setminus X$ and let $U$ be a bounded neighbourhood of $\xi$, in which case $\overline U \cap X$ is not compact.
\end{proof}

When $I$ is infinite, the distance on $S_I(E)$ is somewhat badly behaved: it can be infinite, and parts of \autoref{lem:TypeSpaceTopologyMetric} may fail.
Using it will become even more problematic for finer notions considered below, such as type isolation.
Henceforth we shall only consider the distance between types when $I$ is finite.

\begin{dfn}
  \label{lem:VariableRestriction}
  Let $m \leq n$.
  The \emph{variable restriction map} $\pi\colon \tS_n(E) \rightarrow \tS_m(E)$ is the natural one induced by the inclusion $E(x_0,\ldots,x_{m-1}) \subseteq E(x_0,\ldots,x_{n-1})$, namely $\|y\|^{\pi \xi} = \|y\|^\xi$ for $y \in E(x_0,\ldots,x_{m-1})$.
\end{dfn}

\begin{lem}
  \label{lem:VariableRestriction}
  Let $m \leq n$, and let $\pi\colon \tS_n(E) \rightarrow \tS_m(E)$ denote the variable restriction map.
  Then for every $\xi \in \tS_n(E)$ and $\zeta \in \tS_m(E)$ we have $d(\pi \xi,\zeta) = d(\xi,\pi^{-1}\zeta)$.
  Moreover there exists $\rho \in \pi^{-1}\zeta$ such that $d(\pi \xi,\zeta) = d(\xi,\rho)$ and $\|x_i\|^\rho = \|x_i\|^\xi$ for all $m \leq i < n$.

  In particular, the map $\pi$ is metrically open.
\end{lem}
\begin{proof}
  The inequality $d(\pi \xi,\zeta) \leq d(\xi,\pi^{-1}\zeta)$ is immediate.
  For the opposite inequality, assume that $d(\pi \xi, \zeta) < r$.
  By definition, there exist an extension $F \supseteq E$ and realisations $\bar a$ of $\pi \xi$ and $\bar b$ of $\zeta$ in $F$ such that $\|a_i - b_i\| < r$ for $i < m$.
  By \autoref{fct:BanachSpaceAmalgamation}, possibly extending $F$, there is $\bar c \in F^{n-m}$ such that $\tp(\bar a \bar c) = \xi$.
  Then $\rho = \tp(\bar b\bar c/E)$ is as desired for both the main assertion and the moreover part.
  It follows that $\pi B(\xi,r) \supseteq B(\pi \xi,r)$, so $\pi$ is metrically open.
\end{proof}

\begin{dfn}
  \label{dfn:Isolation}
  We say that a type $\xi \in \tS_n(E)$ is \emph{isolated} if the distance and the topology agree at $\xi$, i.e., if every metric neighbourhood of $\xi$ is also a topological one.
\end{dfn}

This is the definition of isolation in a topometric space, taking into account both the metric structure and the topological structure.
Ordinary topological spaces can be viewed as topometric spaces by equipping them with the discrete $0/1$ distance, in which case the notion of isolation as defined here coincides with the usual one.

Many results regarding ordinary topological spaces still hold, when translated correctly, with the topometric definitions.
For example, the fact that a dense set must contain all isolated points becomes the following.
Notice that in \autoref{lem:IsolatedTypesMetricallyClosed} below we prove that the set of isolated types is itself metrically closed.

\begin{lem}
  \label{lem:DenseIsolatedTypes}
  Let $E$ be a Banach space, $D \subseteq \tS_n(E)$ a dense, metrically closed set.
  Then $D$ contains all isolated types.
\end{lem}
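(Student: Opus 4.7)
The plan is to mimic the classical fact that a dense subset of a topological space must contain every isolated point, but read off the topometric definition of isolation carefully.

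Let $\xi \in \tS_n(E)$ be isolated. By \autoref{dfn:Isolation}, for every $\varepsilon > 0$, the metric ball $B(\xi,\varepsilon) = \{\zeta : d(\xi,\zeta) < \varepsilon\}$, being a metric neighbourhood of $\xi$, must also be a topological neighbourhood. Concretely, this yields a topologically open set $U_\varepsilon$ with $\xi \in U_\varepsilon \subseteq B(\xi,\varepsilon)$.

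Since $D$ is (topologically) dense, $U_\varepsilon \cap D$ is nonempty, so we may pick $\zeta_\varepsilon \in D$ with $d(\xi,\zeta_\varepsilon) < \varepsilon$. Letting $\varepsilon$ range over $1/n$ for $n \in \bN$, we obtain a sequence $\zeta_n \in D$ converging metrically to $\xi$. The hypothesis that $D$ is metrically closed then gives $\xi \in D$.

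There is no real obstacle: the only substantive point is the correct reading of \autoref{dfn:Isolation}, namely that isolation lets us upgrade density in the (coarser) topological sense into metric approximation by points of $D$, at which stage metric closedness finishes the argument.
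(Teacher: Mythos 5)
Your proof is correct and follows essentially the same route as the paper's one-line argument: isolation upgrades topological density into metric approximation, and metric closedness then forces $\xi \in D$. You have simply unwound the paper's compressed statement into explicit steps.
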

\begin{proof}
  If $\xi$ is isolated, then all metric neighbourhoods of $\xi$, which are also topological neighbourhoods, must intersect $D$.
\end{proof}

For reasons that will become clearer in \autoref{sec:Gurarij}, one of our main goals in this paper is to characterise isolated types.
We start with the easiest situation.

\begin{prp}
  \label{prp:SeparableCategoricity}
  Let $0$ denote the trivial Banach space.
  Then every type in $\tS_n(0)$ is isolated.
  In other words, the distance on $\tS_n(0)$ agrees with the topology.
\end{prp}
\begin{proof}
  Given $N \in \bN$, let $X_N \subseteq 0(n)_1$ be the finite set consisting of all $\sum \lambda_i x_i$ where $\sum |\lambda_i| = 1$ and each $\lambda_i$ is of the form $\frac{k}{N}$.
  For $\xi \in \tS_n(0)$, let $U_{\xi,N}$ be its neighbourhood consisting of all $\zeta$ such that
  \begin{gather*}
    \forall x \in X_N \qquad \|x\|^\xi - 1/N < \|x\|^\zeta < \|x\|^\xi + 1/N.
  \end{gather*}
  This means in particular that $\|x_i\|^\zeta < \|x_i\|^\xi + 1$ for all $i < n$, and now an easy calculation together with \autoref{prp:TypeDistance} yields that there exists a constant $C(\xi)$ such that for all $N$, $U_{\xi,N}$ is contained in the ball of radius $C(\xi)/N$ around $\xi$, which is what we had to show.
\end{proof}

This already allows us to construct the following useful tool of variable change in a type.

\begin{dfn}
  \label{dfn:VariableChange}
  \begin{enumerate}
  \item Given a linear map $\varphi\colon E(\bar y) \rightarrow E(\bar x)$ extending $\id_E$, we define a pull-back map $\varphi^*\colon \tS_{\bar x}(E) \rightarrow \tS_{\bar y}(E)$, or $\varphi^*\colon \tS_n(E) \rightarrow \tS_m(E)$, by
    \begin{gather*}
      \|w\|^{\varphi^* \xi} = \| \varphi w \|^\xi, \qquad w \in E(\bar y).
    \end{gather*}
    For $A \subseteq \tS_{\bar y}(E)$ we define $\varphi_* A =(\varphi^*)^{-1}(A) \subseteq \tS_{\bar x}(E)$.
  \item Given a tuple $\bar z$ in $E(\bar x)$, of the same length as $\bar y$, define $\varphi\colon E(\bar y) \rightarrow E(\bar x)$ to be the unique linear map extending $\id_E$ and sending $y_i \mapsto z_i$.
    We then write $\xi\rest_{\bar z} = \varphi^* \xi$, so
    \begin{gather*}
      \left\| a + \sum \lambda_i y_i \right\|^{\xi\rest_{\bar z}} = \left\| a + \sum \lambda_i z_i \right\|^\xi.
    \end{gather*}
    (In this notation $E$, $\bar x$ and $\bar y$ are assumed to be known from context.)
  \end{enumerate}
\end{dfn}

\begin{lem}
  \label{lem:VariableChange}
  For a fixed tuple $\bar y \in E(\bar x)^m$, possibly with repetitions, the restriction map $\tS_n(E) \rightarrow \tS_m(E)$, $\xi \mapsto \xi\rest_{\bar y}$, is continuous and Lipschitz (here $n = |\bar x|$).
  If $\bar y$ are linearly independent over $E$, then this map is also topologically and metrically open.
  Moreover, the metric openness is ``Lipschitz'' as well, in the sense that there exists a constant $C = C(\bar y)$ such that for all $\xi$ and all $r > 0$ we have
  \begin{gather*}
    B(\xi,r) \rest_{\bar y} \supseteq B(\xi \rest_{\bar y},Cr).
  \end{gather*}
\end{lem}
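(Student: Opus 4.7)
The plan is to split the assertions into the easy pair (continuity and the Lipschitz bound) and the openness claim, which requires a reduction. For continuity, observe that $\|z\|^{\xi\rest_{\bar y}} = \|\varphi z\|^\xi$ for $z \in E(\bar y)$, so continuity in $\xi$ is immediate from the definition of the topology on $\tS_{\bar x}(E)$. For the Lipschitz estimate, write $y_j = e_j + \sum_i c_{j,i} x_i$ and set $C(\bar y) := \max_j \sum_i |c_{j,i}|$; then for $z = a + \sum_j \lambda_j y_j \in E(\bar y)_1$ the element $\varphi z \in E(\bar x)$ has variable-coefficient sum at most $C(\bar y)$, and rescaling to $E(\bar x)_1$ in \autoref{prp:TypeDistance} yields $|\|z\|^{\xi\rest_{\bar y}} - \|z\|^{\xi'\rest_{\bar y}}| \leq C(\bar y)\,d(\xi,\xi')$, giving the $C(\bar y)$-Lipschitz bound.

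For openness, the key idea is to reduce to coordinate restriction using the linear independence hypothesis. The matrix $(c_{j,i})$ has rank $m$, so after permuting $\bar x$ we may take $\bar z = (x_{m+1},\dots,x_n)$ and observe that $(\bar y, \bar z)$ is itself a free $E$-basis of the variable part of $E(\bar x)$. The map $\Psi\colon \xi \mapsto \xi\rest_{(\bar y,\bar z)}$ is then a bijection $\tS_{\bar x}(E) \to \tS_{\bar y,\bar z}(E)$, and the Lipschitz estimate of the first step, applied both to $\Psi$ and to the inverse change of variables expressing $\bar x$ in terms of $(\bar y,\bar z)$, shows that $\Psi$ is a bi-Lipschitz homeomorphism. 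Under $\Psi$ the map $\xi \mapsto \xi\rest_{\bar y}$ becomes the coordinate restriction $\pi\colon \tS_{\bar y,\bar z}(E) \to \tS_{\bar y}(E)$, so it suffices to show that $\pi$ is both topologically and metrically open. The metric case, including the Lipschitz form $\pi B(\sigma,r)\supseteq B(\pi\sigma,r)$, is the metric openness supplied by \autoref{lem:TypeSpaceTopologyMetric}, and pulling back through $\Psi$ yields the required constant $C(\bar y)$.

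The main obstacle is topological openness of $\pi$, which is not directly supplied by the previous lemma. The plan is to adapt the realisation construction used there for the metric case: given a basic open $V = \{\sigma : |\|w_k\|^\sigma - \|w_k\|^{\sigma_0}| < \epsilon_k\}$ around $\sigma_0$, decompose each $w_k = u_k + v_k$ with $u_k = a_k + \sum_j \alpha_{k,j}\,y_j \in E(\bar y)$ and $v_k \in \operatorname{span}(\bar z)$, and for any $\eta$ in a suitable topological neighbourhood of $\pi\sigma_0$, find realisations $\bar b$ of $\eta$ and $\bar a$ of $\sigma_0$ in a common extension $F \supseteq E$; then $\sigma := \tp(b_1,\dots,b_m,a_{m+1},\dots,a_n/E)$ automatically satisfies $\pi\sigma = \eta$ and $|\|w_k\|^\sigma - \|w_k\|^{\sigma_0}| \leq \sum_j |\alpha_{k,j}|\,\|b_j - a_j\|_F$. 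The delicate point is to construct $F$ meeting these bounds from only topological (rather than metric) closeness of $\eta$ to $\pi\sigma_0$ at the specific witnesses $u_k$; I would attempt this via a targeted application of \autoref{lem:ApproximateTypeAmalgamation} to the finite-dimensional subspace spanned by $E$ and the $u_k$'s, controlling only the relevant linear combinations rather than demanding closeness on all of $E(\bar y)$.
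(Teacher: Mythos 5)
Your treatment of continuity, the Lipschitz estimate, and the metric openness (via the reduction to coordinate restriction and Lemma~\ref{lem:TypeSpaceTopologyMetric}\ref{item:VariableRestrictionMetric}) is correct and matches the paper in spirit.

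For topological openness, however, your strategy diverges from the paper's and has a real gap, which you flag but do not close. Your plan is to realise $\eta$ (topologically close to $\pi\sigma_0$) and $\sigma_0$ in a common $F \supseteq E$ and set $\sigma=\tp(b_0,\dots,b_{m-1},a_m,\dots,a_{n-1}/E)$. To get $\sigma\in V$ you must control $\bigl\|\sum_j\alpha_{k,j}(b_j-a_j)\bigr\|_F$, and the amalgam you need must identify the two copies of $E$ as well as satisfy these finitely many side constraints. Working out the analogue of \autoref{lem:ApproximateTypeAmalgamation} for such an amalgam, the condition on $(\eta,\eta_0)$ becomes: for all $e\in E$ and all $\bar s$,
\begin{gather*}
  \Bigl| \bigl\| e + \textstyle\sum_j (\sum_k s_k\alpha_{k,j}) x_j \bigr\|^{\eta} - \bigl\| e + \textstyle\sum_j (\sum_k s_k\alpha_{k,j}) x_j \bigr\|^{\eta_0} \Bigr| \leq \textstyle\sum_k |s_k|\varepsilon_k.
\end{gather*}
This quantifies over \emph{all} of $E$, not just the finitely many witnesses $u_k$, precisely because the amalgam must agree on $E$. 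A basic open neighbourhood of $\eta_0$ constrains only finitely many elements of $E(\bar y)$, so topological closeness cannot ensure this condition; by \autoref{prp:TypeDistance}, what it does ensure is $d(\eta,\eta_0)$ small, which is the metric hypothesis you were trying to avoid needing. Your proposed fix --- applying the amalgamation lemma only over the subspace generated by the $u_k$'s --- does not produce a type over all of $E$, so $\pi\sigma=\eta$ would fail.

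The paper avoids the problem entirely by a different route. For $E=0$ the topology and metric on $\tS_n(0)$ agree (\autoref{prp:SeparableCategoricity}), so the already-established metric openness \emph{is} topological openness. For finite-dimensional $E$ with basis $\bar b$, one identifies $E(\bar x)$ with $0(\bar w,\bar x)$ and realises $\cdot\rest_{\bar y}\colon\tS_{\bar x}(E)\to\tS_{\bar y}(E)$ as the restriction of the open map $\cdot\rest_{\bar w,\bar y}\colon\tS_{\bar w,\bar x}(0)\to\tS_{\bar w,\bar y}(0)$ to the fibres over $\tp(\bar b)\in\tS_{\bar w}(0)$; because the restriction maps to $\tS_{\bar w}(0)$ commute with $\cdot\rest_{\bar w,\bar y}$, openness passes to the fibre map. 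The general case follows because any basic open set in $\tS_{\bar x}(E)$ only involves finitely many parameters from $E$. I recommend adopting this reduction to the trivial parameter space rather than pursuing the direct amalgamation.
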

\begin{proof}
  Continuity and the Lipschitz condition are easy.
  We therefore assume that $\bar y$ are linearly independent over $E$, in which case we may also view them as formal unknowns.
  This gives rise to an inclusion map $E(\bar y) \hookrightarrow E(\bar x)$, and $\xi \mapsto \xi\rest_{\bar y}$ may be viewed as a map $\tS_{\bar x}(E) \rightarrow \tS_{\bar y}(E)$.
  In the special case where $\bar y$ generate $E(\bar x)$ over $E$ we have $E(\bar x) = E(\bar y)$, and the Lipschitz map $\tS_{\bar y}(E) \rightarrow \tS_{\bar x}(E)$, $\xi \mapsto \xi\rest_{\bar x}$ is the inverse of $\xi \mapsto \xi\rest_{\bar y}$ giving the moreover part.
  In the special case where $y_i = x_i$ for $i < m$, the moreover part follows from \autoref{lem:VariableRestriction}.
  In the general case, we may complete $\bar y$ to a basis for $E(\bar x)$ over $E$, and the moreover part follows as a composition of the two special cases.

  For topological openness, we proceed as follows.
  In the case where $E = 0$, this follows from metric openness and \autoref{prp:SeparableCategoricity}.
  Let us consider now the case where $E$ is finite-dimensional.
  We fix a basis $\bar b$ for $E$ and a corresponding tuple of variables $\bar w$.
  We may then identify $E(\bar x)$ with $0(\bar w,\bar x)$, and thus $\bar y$ with its image in $0(\bar w,\bar x)$.
  We already know that $\cdot\rest_{\bar w,\bar y}\colon \tS_{\bar w,\bar x}(0) \rightarrow \tS_{\bar w,\bar y}(0)$ is open.
  In addition, we have a commutative diagram
  \begin{gather*}
    \begin{xy}
      (0,0)*+{\tS_{\bar w,\bar x}(0)}="Swx",
      (4,0)*+{\tS_{\bar w,\bar y}(0)}="Swy",
      (2,2)*+{\tS_{\bar w}(0)}="Sw",
      \ar^{\cdot\rest_{\bar w,\bar y}} "Swx";"Swy",
      \ar_{\cdot\rest_{\bar w}} "Swx";"Sw",
      \ar^{\cdot\rest_{\bar w}} "Swy";"Sw"
    \end{xy}
  \end{gather*}
  and the map $\cdot\rest_{\bar y}\colon \tS_{\bar x}(E) \rightarrow \tS_{\bar y}(E)$ is homeomorphic to the fibre of the horizontal arrow over $\tp(\bar b) \in \tS_{\bar w}(0)$, so it is open as well.
  The infinite-dimensional case follows from the finite-dimensional one, since any basic open set in $\tS_{\bar x}(E)$ can be defined using finitely many parameters in $E$.
\end{proof}

We leave it to the reader to check that if $\bar y$ are not linearly independent over $E$, then $\cdot\rest_{\bar y}$ is not metrically open, and \textit{a fortiori} not topologically so (consider, for example $\cdot\rest_{x,x}\colon \tS_1(0) \rightarrow \tS_2(0)$).

\begin{lem}
  \label{lem:TopometricOpen}
  Let $U \subseteq \tS_n(E)$ be open and $r > 0$.
  Then $B(U,r) = \{\xi : d(\xi,U) < r\} \subseteq \tS_n(E)$ is open as well.
\end{lem}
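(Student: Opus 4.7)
The plan is to ``thicken the witness'' of $\zeta \in B(U,r)$ by lifting it to a joint type on twice as many variables and then projecting back. Take $\zeta \in B(U,r)$ and pick $\xi \in U$ with $d(\xi,\zeta) < r$. By definition of distance, there is a Banach space extension $F \supseteq E$ and realisations $\bar a$ of $\xi$ and $\bar b$ of $\zeta$ in $F$ with $\max_{i<n} \|a_i - b_i\| < r$. The idea is to encode both $\xi$ and $\zeta$ together with this near-amalgamation as a single type and then exploit topological openness of variable restriction to spread $\zeta$ into an open set.

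Concretely, I introduce two disjoint $n$-tuples of variables $\bar x$ and $\bar y$, and work in $\tS_{\bar x,\bar y}(E)$. Let $\rho = \tp(\bar a\bar b/E)$, so that $\rho\rest_{\bar x} = \xi$ and $\rho\rest_{\bar y} = \zeta$, and consider
\begin{gather*}
  W = \bigl\{\rho' \in \tS_{\bar x,\bar y}(E) : \rho'\rest_{\bar x} \in U \text{ and } \max_{i<n}\|x_i - y_i\|^{\rho'} < r\bigr\}.
\end{gather*}
The set $W$ is open in $\tS_{\bar x,\bar y}(E)$: the continuity of $\cdot\rest_{\bar x}$ (\autoref{lem:VariableChange}) handles the first condition, while the second is the preimage of $[0,r)^n$ under the continuous coordinate maps $\widehat{x_i-y_i}$. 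Plainly $\rho \in W$.

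Now I invoke \autoref{lem:VariableChange} again: since $\bar y$ is linearly independent over $E$, the restriction map $\cdot\rest_{\bar y}\colon \tS_{\bar x,\bar y}(E) \to \tS_n(E)$ is topologically open. Hence $W\rest_{\bar y}$ is an open neighbourhood of $\zeta$, and it suffices to show $W\rest_{\bar y} \subseteq B(U,r)$. For any $\rho' \in W$, realising $(\bar x,\bar y)$ inside $E[\rho']$ directly witnesses a joint realisation of $\rho'\rest_{\bar x}$ and $\rho'\rest_{\bar y}$ at distance $\max_i\|x_i-y_i\|^{\rho'} < r$, so $d(\rho'\rest_{\bar y}, \rho'\rest_{\bar x}) < r$ with $\rho'\rest_{\bar x} \in U$, as needed.

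I do not anticipate a real obstacle: everything follows from the earlier Lemma \ref{lem:VariableChange} once one identifies the right open set in the $2n$-variable type space. The only point requiring mild care is the identification of $\max_i\|x_i-y_i\|^{\rho'} < r$ as simultaneously an open condition on $\rho'$ and an upper bound for $d(\rho'\rest_{\bar x},\rho'\rest_{\bar y})$ -- but this is immediate from the definitions of the topology and of the distance.
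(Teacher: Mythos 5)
Your proof is correct and is essentially the paper's own argument: both pass to the doubled variable space $\tS_{\bar x,\bar y}(E)$, cut out the open set defined by $\|x_i-y_i\|<r$ together with the preimage of $U$ under $\cdot\rest_{\bar x}$, and then apply the topological openness of $\cdot\rest_{\bar y}$ from \autoref{lem:VariableChange}. The only cosmetic difference is that the paper states the global equality $\bigl(W\cap(\cdot\rest_{\bar x})^{-1}U\bigr)\rest_{\bar y}=B(U,r)$ directly, whereas you unwind the same fact as a point-by-point containment argument.
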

\begin{proof}
  Let $\bar x$ and $\bar y$ be two $n$-tuples of variables.
  Let us identify $\tS_n(E)$ with $\tS_{\bar x}(E)$, and let $W \subseteq \tS_{\bar x,\bar y}(E)$ consist of all $\xi$ such that $\| x_i - y_i \|^\xi < r$ for $i < n$.
  Then $W$ is open, and by \autoref{lem:VariableChange} so is $V = \bigl( W \cap (\cdot\rest_{\bar x})^{-1}(U) \bigr)\rest_{\bar y} \subseteq \tS_{\bar y}(E)$.
  Identifying $\tS_{\bar y}(E)$ with $\tS_n(E)$ as well, $V = B(U,r)$.
\end{proof}

\begin{lem}
  \label{lem:IsolatedTypesMetricallyClosed}
  Let $E$ be a Banach space.
  \begin{enumerate}
  \item A type in $\tS_n(E)$ is isolated if and only if all its metric neighbourhoods have non empty interior.
  \item The set of isolated types in $\tS_n(E)$ is metrically closed.
  \end{enumerate}
\end{lem}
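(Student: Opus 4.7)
The plan is to prove (1) in both directions and then derive (2) as a quick consequence, using \autoref{lem:TopometricOpen} as the only real tool.

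For (1), the forward direction is immediate from the definition: if $\xi$ is isolated then the metric neighbourhood $B(\xi,r)$ is a topological neighbourhood, hence contains $\xi$ in its interior. For the converse, I would suppose every metric neighbourhood of $\xi$ has non-empty interior and show, given $r>0$, that $B(\xi,r)$ contains a topologically open set around $\xi$. The trick is to apply the hypothesis to the smaller ball $B(\xi,r/2)$: pick a non-empty open $V \subseteq B(\xi,r/2)$. Then by \autoref{lem:TopometricOpen} the set $W = B(V,r/2)$ is topologically open. Every point of $V$ is at distance $<r/2$ from $\xi$, which gives $\xi \in W$; and the triangle inequality gives $W \subseteq B(\xi,r)$. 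So $B(\xi,r)$ is a topological neighbourhood of $\xi$, and $\xi$ is isolated.

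For (2), let $\xi$ lie in the metric closure of the set of isolated types. I will verify that every metric neighbourhood of $\xi$ has non-empty interior and conclude via (1). Given $r>0$, choose an isolated type $\xi_0$ with $d(\xi,\xi_0) < r/2$. By isolation, the metric neighbourhood $B(\xi_0,r/2)$ of $\xi_0$ is also a topological neighbourhood, so it contains a topologically open set $U \ni \xi_0$. The triangle inequality then yields $U \subseteq B(\xi_0,r/2) \subseteq B(\xi,r)$, exhibiting a non-empty open subset of $B(\xi,r)$.

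I do not expect any serious obstacle: the whole argument rests on \autoref{lem:TopometricOpen} (ensuring that ``thickening'' an open set in the metric keeps it topologically open), which hands us the missing ingredient for the non-trivial direction of (1), and (2) is then a direct approximation argument via (1).
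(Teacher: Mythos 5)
Your proof is correct and follows exactly the route the paper has in mind (the paper simply states that both assertions ``follow easily'' from \autoref{lem:TopometricOpen}). Your expansion — for (1) the forward direction by definition and the converse by thickening a non-empty open $V \subseteq B(\xi,r/2)$ to $W = B(V,r/2)$, for (2) an approximation argument reducing to (1) via an isolated type within distance $r/2$ — is precisely the ``easy'' argument being referred to, carried out in full detail.
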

\begin{proof}
  The first assertion follows easily from \autoref{lem:TopometricOpen}, and the second from the first.
\end{proof}

Another basic operation one can consider on types is the \emph{restriction of parameters} $\tS_n(F) \rightarrow \tS_n(E)$ when $E \subseteq F$.

\begin{lem}
  \label{lem:ParameterRestriction}
  Let $E \subseteq F$ be an isometric inclusion of Banach spaces.
  Then the natural type restriction map $\theta\colon \tS_n(F) \rightarrow \tS_n(E)$ is continuous, closed, and satisfies $\theta B(\xi,r) = B(\theta \xi,r)$.

  In particular, $\theta$ is both topologically and metrically a quotient map.
\end{lem}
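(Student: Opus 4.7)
The plan is to handle the four assertions---continuity, the metric identity $\pi B(\xi,r) = B(\pi\xi,r)$, closedness, and the concluding ``quotient map'' statement---separately, with surjectivity of $\pi$ (needed for the quotient assertion) obtained as a by-product of the metric identity.

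Continuity should be immediate from the definition of the topology on $\tS_n(E)$: for each $x \in E(n) \subseteq F(n)$, the composition $\hat x \circ \pi$ on $\tS_n(F)$ coincides with the function $\zeta \mapsto \|x\|^\zeta$ computed in $F(n)$, which is continuous by definition of the topology on $\tS_n(F)$, and these functions generate the topology on $\tS_n(E)$.

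For the metric identity, the easy inclusion $\pi B(\xi,r) \subseteq B(\pi\xi,r)$ follows because any realisations witnessing $d(\xi,\zeta) < r$ in an extension of $F$ are automatically realisations of $\pi\xi$ and $\pi\zeta$ over $E$, so $\pi$ cannot increase distances. The opposite inclusion is the crux: given $\eta$ with $d(\pi\xi,\eta) < r$, I would invoke the ``infimum attained'' clause of \autoref{prp:TypeDistance} to produce a single Banach space $H \supseteq E$ carrying realisations $\bar c$ of $\pi\xi$ and $\bar b$ of $\eta$ with $\sup_i \|c_i - b_i\| = d(\pi\xi,\eta)$. Because $\bar c$ and the canonical realisation $\bar a$ of $\xi$ in $F[\xi]$ both realise $\pi\xi$ over $E$, the closed subspaces they generate over $E$ are naturally isometrically identified, and \autoref{fct:BanachSpaceAmalgamation} amalgamates $H$ and $F[\xi]$ over this common subspace, identifying $\bar c$ with $\bar a$ inside a larger space $G$. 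Then $\zeta := \tp(\bar b / F)$ lifts $\eta$ and satisfies $d(\xi,\zeta) \leq d(\pi\xi,\eta) < r$. Surjectivity drops out of the same construction (for any $\eta$, amalgamate $F$ and $E[\eta]$ over $E$).

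Closedness is the step I expect to require slightly more thought. The plan is to use \autoref{lem:TypeSpaceTopologyMetric}, specifically the characterisation of closed sets as those whose intersection with every compact set is compact, together with the fact that closed and bounded sets are compact. The key observation is that $\pi^{-1}$ carries bounded sets to bounded sets: any element of $F(n)$ decomposes as $y + \sum \lambda_i x_i$ with $y \in F$ of fixed norm, and the values $\|x_i\|^\zeta$ are bounded on $\pi^{-1}(K)$ by the bound on $K$. Given a closed $A \subseteq \tS_n(F)$ and a compact $K \subseteq \tS_n(E)$, the set $A \cap \pi^{-1}(K)$ is then closed and bounded, hence compact, and $\pi(A) \cap K = \pi(A \cap \pi^{-1}(K))$ is its continuous image, hence compact; so $\pi(A)$ is closed. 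Continuity, surjectivity and closedness together yield the topological quotient assertion, and the ball identity upgrades this to the metric quotient statement.
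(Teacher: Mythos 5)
Your proof is correct and follows essentially the same route as the paper's: continuity is immediate from the definition of the topologies, closedness is deduced from \autoref{lem:TypeSpaceTopologyMetric} by showing $\pi^{-1}$ carries compact sets to compact sets, and the ball identity $\pi B(\xi,r) = B(\pi\xi,r)$ is obtained by amalgamating over a common subspace (the paper invokes \autoref{fct:BanachSpaceAmalgamation} directly; you unpack this a bit more via the ``infimum attained'' clause of \autoref{prp:TypeDistance}, but the amalgamation is the same mechanism). No gaps.
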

\begin{proof}
  It is clear that $\theta$ is continuous.
  To see that it is closed we use \autoref{lem:TypeSpaceTopologyMetric}.
  Indeed, since closed sets are exactly those that intersect compact sets on compact sets, it will be enough to show that if $K \subseteq \tS_n(E)$ is compact, then so is $\theta^{-1} K$, which follows from the characterisation of compact sets as closed and bounded.

  It is clear that $\theta B(\xi,r) \subseteq B(\theta \xi,r)$.
  Conversely, if $\zeta_0 \in \tS_n(E)$, then using \autoref{fct:BanachSpaceAmalgamation}, there exists $\zeta \in \theta^{-1}\zeta_0$ with $d(\xi,\zeta) \leq d(\theta \xi,\zeta_0)$, which proves that $\theta B(\xi,r) = B(\theta \xi,r)$.
\end{proof}

We also obtain that the theory $T$ of (unit balls of) Banach spaces admits a model completion $T^*$, namely a companion with quantifier elimination, whose types are exactly those defined above.
Moreover, as we shall see in the following section, the models of $T^*$ are exactly the Gurarij spaces.
Since this is somewhat of an aside with respect to the rest of this paper, we shall allow ourselves to be brief, and assume that the reader is familiar with continuous first order logic (see \cite{BenYaacov-Usvyatsov:CFO,BenYaacov-Berenstein-Henson-Usvyatsov:NewtonMS}), and, for the part regarding Banach spaces as unbounded metric structures, also with unbounded continuous logic (see \cite{BenYaacov:Unbounded}).

\begin{lem}
  \label{lem:QuantifierEliminatingTheoryFromTypeSpace}
  Let $T$ be an inductive theory in continuous first order logic, and for $n \in \bN$ let $\tSqf_n(T)$ denote the space of quantifier-free types consistent with $T$, equipped with the natural logic topology.
  Assume first, that every two models of $T$ amalgamate over a common substructure, and second, that for every $n$, the variable restriction map $\tSqf_{n+1}(T) \rightarrow \tSqf_n(T)$ is open.
  Then $T$ admits a model completion, namely a companion that eliminates quantifiers.

  (In fact, an approximate amalgamation property for models of $T$ over a common finitely generated substructure suffices.)
\end{lem}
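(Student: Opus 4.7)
The plan is to take $T^*$ to be the theory of existentially closed (e.c.) models of $T$ and to verify directly that it is the sought-after model completion. Three things need to be checked: that $T^*$ is axiomatizable in the same language as $T$, that every model of $T$ embeds in a model of $T^*$, and that $T^*$ eliminates quantifiers.

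The key input is the openness assumption. For each quantifier-free formula $\varphi(\bar x, y)$, viewed as a continuous function on $\tSqf_{n+1}(T)$, I would consider the fibrewise infimum
\[
\tilde\varphi(p) = \inf\bigl\{\, \varphi(q) : q \in \pi_n^{-1}(p) \,\bigr\}.
\]
Lower semi-continuity of $\tilde\varphi$ is automatic, since sublevel sets of $\tilde\varphi$ are $\pi_n$-images of compact sublevel sets of $\varphi$, hence closed. Upper semi-continuity is precisely where openness of $\pi_n$ enters: if $\varphi(q_0) < r$ and $\pi_n(q_0) = p_0$, then $\pi_n$ sends the open set $\{\varphi < r\}$ to an open neighbourhood of $p_0$ on which $\tilde\varphi < r$. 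Hence $\tilde\varphi$ is a continuous function on $\tSqf_n(T)$, i.e., a quantifier-free definable predicate, and I would take as the axioms of $T^*$ the theory $T$ together with the scheme
\[
\sup_{\bar x} \bigl(\, \inf_y \varphi(\bar x, y) - \tilde\varphi(\bar x) \,\bigr) = 0
\]
ranging over quantifier-free $\varphi$.

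The amalgamation hypothesis is what ensures that these axioms really do capture existential closedness: given $M \models T$, a tuple $\bar a \in M^n$ with quantifier-free type $p$, and any $q \in \pi_n^{-1}(p)$, one realises $q$ in some model $N_0$ and amalgamates $N_0$ with $M$ over the finitely generated substructure generated by $\bar a$ to obtain an extension $N \supseteq M$ realising $q$. Consequently $\tilde\varphi(p)$ equals the infimum of $\varphi^N(\bar a, b)$ over all extensions $N \supseteq M$ and $b \in N$, and the axiom scheme states exactly that $M$ is e.c. Quantifier elimination then follows by induction on formula complexity: the axioms convert each $\inf_y \varphi$ into the quantifier-free $\tilde\varphi$, and uniform limits of quantifier-free definable predicates remain quantifier-free definable. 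The companion property is obtained by a standard chain construction — at stage $k$, extending $M_k \models T$ to some $M_{k+1} \models T$ realising all required $\tilde\varphi$-witnesses over parameters in $M_k$ up to error $1/k$ — with the union in $T^*$ because $T$ is inductive.

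The principal obstacle is the axiomatizability step: verifying rigorously that openness of $\pi_n$ yields continuity of $\tilde\varphi$ on \emph{qf-type} space, and that the resulting scheme characterises e.c. models rather than some weaker notion. The approximate-amalgamation variant is handled by replacing exact amalgams with $\varepsilon$-amalgams in the argument identifying $\tilde\varphi$ with the infimum over extensions, and then taking unions of chains of successively finer $\varepsilon$-amalgams to recover genuine amalgams in the limit; the axiomatizability step itself is untouched, as it only involves the \emph{value} of $\tilde\varphi$, which can equivalently be computed via $\varepsilon$-amalgams.
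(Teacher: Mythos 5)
Your proposal is correct and follows essentially the same route as the paper: define the fibrewise infimum $\tilde\varphi$ (the paper calls it $\rho$), use compactness of fibres and openness of $\pi_n$ to show it is continuous, axiomatize $T^*$ by the scheme equating $\inf_y \varphi$ with the quantifier-free definable predicate $\tilde\varphi$, use amalgamation to show e.c.\ models satisfy the scheme (hence $T,T^*$ are companions), and obtain quantifier elimination by induction on formula complexity. The only cosmetic difference is that the paper explicitly replaces $\tilde\varphi$ by a uniform approximation by genuine quantifier-free formulae $\psi_n$ and axiomatizes via $\sup_{\bar x}|\psi_n - \inf_y \varphi| \leq 2^{-n}$, which you subsume in the phrase ``quantifier-free definable predicate.''
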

\begin{proof}
  Let $\varphi(\bar x,y)$ be a quantifier-free formula, inducing a continuous function $\hat \varphi\colon \tSqf_{n+1}(T) \rightarrow \bR$ (which has compact range, by compactness of $\tSqf_{n+1}(T)$).
  Let $\pi\colon \tSqf_{n+1}(T) \rightarrow \tSqf_n(T)$ denote the variable restriction map, and define $\rho\colon \tSqf_n(T) \rightarrow \bR$ as the infimum over the fibre:
  \begin{gather*}
    \rho(q) = \inf \bigl\{ \hat \varphi(p) : \pi p = q \bigr\}.
  \end{gather*}
  Since $\pi$ is continuous (automatically) and open (by hypothesis), $\rho$ is continuous as well, and can therefore be expressed as a uniform limit of $\hat \psi_n\colon \tSqf_n(T) \rightarrow \bR$, where $\psi_n(\bar x)$ are quantifier-free formulae, say $\|\rho - \hat \psi_n\| \leq 2^{-n}$ for all $n$.
  One can now express that $\sup_{\bar x} \, |\psi_n(\bar x) - \inf_y \, \varphi(\bar x,y)| \leq 2^{-n}$ for all $n$ by a set of sentences.

  Let $T^*$ consist of $T$ together with all sentences constructed as above, for all possible quantifier-free formulae $\varphi(\bar x,y)$.
  Then every existentially closed model of $T$ is easily checked to be a model of $T^*$ (using our amalgamation hypothesis), so $T$ and $T^*$ are companions.
  Moreover, by induction on quantifiers, every formula is equivalent modulo $T^*$ to a uniform limit of quantifier-free formulae, so $T^*$ eliminates quantifiers.
\end{proof}

\begin{prp}
  \label{prp:BanachModelCompletion}
  Consider Banach spaces either as metric structures in unbounded continuous logic, or as bounded metric structures via their closed unit balls, as explained, say, in \cite{BenYaacov:NakanoSpaces}.
  Then (in either approach) the theory of the class of Banach spaces is inductive, and admits a model completion $T^*$ which is moreover complete and $\aleph_0$-categorical.

  When the entire Banach space is viewed as a structure, then the types over a subspace are as per \autoref{dfn:Type} and \autoref{dfn:TypeOfTuple}, and if one only considers the unit ball, then the space of $I$-types over $B(E)$ is $\tS_I^{\leq 1}(E) = \bigl\{ \xi \in \tS_I(E)\colon \|x_i\|^\xi \leq 1 \text{ for all } i \in I \bigr\}$.
\end{prp}
\begin{proof}
  Let us consider the theory $T$ of unit balls of Banach spaces.
  It is clearly inductive, and it is fairly easy to check that the space of quantifier-free $I$-types over a unit ball $B(E)$ is the space $\tS^{\leq 1}_I(E)$ defined in the statement.
  By the moreover part of \autoref{lem:VariableRestriction}, variable restriction $\tS^{\leq 1}_{n+1}(E) \rightarrow \tS^{\leq 1}_n(E)$ is metrically open.
  For $E = 0$ this implies in particular that $\tS^{\leq 1}_{n+1}(0) \rightarrow \tS^{\leq 1}_n(0)$ is topologically open, but this latter is just $\tSqf_{n+1}(T) \rightarrow \tSqf_n(T)$.
  This, together with \autoref{fct:BanachSpaceAmalgamation}, fulfils the hypotheses of \autoref{lem:QuantifierEliminatingTheoryFromTypeSpace}.
  By quantifier elimination, $\tS_n(T^*) = \tSqf_n(T) = \tS^{\leq 1}_n(0)$, so in particular, $\tS_0(T^*)$ is a singleton, whereby $T^*$ is complete.
  Finally, $T^*$ is $\aleph_0$-categorical by the Ryll-Nardzewski Theorem and the fact that all types over the trivial space are isolated (see \cite{BenYaacov-Usvyatsov:dFiniteness}).

  The case of Banach spaces as unbounded structures follows via the bi-interpretability of the whole Banach space with its unit ball.
\end{proof}

\section{The Gurarij space}
\label{sec:Gurarij}

\begin{dfn}
  \label{dfn:Gurarij}
  We recall from, say, Lusky \cite{Lusky:UniqueGurarij} that a \emph{Gurarij space} is a Banach space $\bG$ having the property that for any $\varepsilon > 0$, finite-dimensional Banach space $E \subseteq F$, and isometric embedding $\varphi\colon E \rightarrow \bG$, there is a linear map $\psi\colon F \rightarrow \bG$ extending $\varphi$ such that in addition, for all $x \in F$, $(1 - \varepsilon)\|x\| \leq \|\psi x\| \leq (1 + \varepsilon)\|x\|$.
\end{dfn}

Some authors add the requirement that a Gurarij space be separable, but from our point of view it seems more elegant to consider separability as a separate property.

\begin{lem}
  \label{lem:GurarijTarskiVaught}
  Let $F$ be a Banach space.
  Then the following are equivalent:
  \begin{enumerate}
  \item The space $F$ is a Gurarij space.
  \item \label{item:GurarijTarskiVaughtN}
    For every $n$, the set of realised types $\tp(\bar a/F)$, as $\bar a$ varies over $F^n$, is dense in $\tS_n(F)$.
  \item Same as \autoref{item:GurarijTarskiVaughtN} for $n = 1$.
  \end{enumerate}
\end{lem}
\begin{proof}
  \begin{cycprf}
  \item[\impnum{1}{3}]
    Let $U \subseteq \tS_1(F)$ be open and $\xi \in U$.
    We may assume that $U$ is defined by a finite set of conditions of the form $\bigl| \| a_i + r_i x \| - 1 \bigr| < \varepsilon$, where $\| a_i + r_i x \|^\xi = 1$.
    Let $E \subseteq F$ be the subspace generated by the $a_i$, and let $E' = E + \bR x$ be the extension of $E$ generated by the restriction of $\xi$ to $E$.
    By hypothesis, there is a linear embedding $\psi\colon E' \rightarrow F$ extending the identity such that $(1 - \varepsilon)\|y\| < \|\psi y\| < (1 + \varepsilon)\|y\|$ for all $y \in E'$, and in particular for $y = a_i + r_i x$, so $\tp(\psi x/F) \in U$.
  \item[\impprev]
    We prove this by induction on $n$, the case $n = 0$ being tautologically true.
    For the induction step, let $\emptyset \neq U \subseteq \tS_{\bar x,y}(F)$ be open, and let $V = U\rest_{\bar x} \subseteq \tS_{\bar x}(F)$.
    By \autoref{lem:VariableChange}, $V$ is open, and by the induction hypothesis there are $\bar b \in F^n$ such that $\tp(\bar b/F) \in V$.
    Now, consider the map $\theta\colon \tS_y(F) \rightarrow \tS_{\bar x,y}(F)$, sending $\tp(a/F) \mapsto \tp(\bar b,a/F)$.
    It is continuous (in fact, it is a topological embedding), so $\emptyset \neq \theta^{-1} U \subseteq \tS_1(F)$ is open.
    By hypothesis, there is $c \in F$ such that $\tp(c/F) \in \theta^{-1} U$, i.e., such that $\tp(\bar b,c/F) \in U$, as desired.
  \item[\impprev]
    Let $E \subseteq E'$ be finite-dimensional, with $E \subseteq F$, and let $\varepsilon > 0$.
    Let $\bar a$ be a basis for $E$, and let $\bar a,\bar b$ be a basis for $E'$, say $|\bar a| = n$ and $|\bar b| = m$.
    For $N \in \bN$, let $U_N \subseteq \tS_m(F)$ be defined by the (finitely many) conditions of the form $\| \sum s_i a_i + \sum r_j x_j \| \in (1 - \varepsilon, 1 + \varepsilon)$, where $s_i$ and $r_j$ are of the form $\frac{k}{N}$ and $\|\sum s_i a_i + \sum r_j b_j\| \in (1 - \varepsilon, 1 + \varepsilon)$.
    By hypothesis there is a tuple $\bar c \in F^m$ such that $\tp(\bar c/F) \in U_N$, and we may define $\psi\colon E' \rightarrow F$ being the identity on $E$ and sending $\bar b \mapsto \bar c$.
    For $N$ big enough, it follows from the construction that if $y \in E'$, $\|y\| = 1$, then $\bigl| \|\psi y\| - 1 \bigr| < 2\varepsilon$, which is good enough.
  \end{cycprf}
\end{proof}

Model theorists may find the second and third conditions of \autoref{lem:GurarijTarskiVaught} reminiscent of a topological formulation of the Tarski-Vaught Criterion: a metrically closed subset $A$ of a structure is an elementary substructure if and only if the set of types over $A$ realised in $A$ is dense.
Indeed,

\begin{thm}
  \label{thm:GurarijTheory}
  Let $T^*$ be the model completion of the theory of Banach spaces, as per \autoref{prp:BanachModelCompletion}.
  Then its models are exactly the Gurarij spaces.
  In particular, since $T^*$ is $\aleph_0$-categorical, there exists a unique separable Gurarij space (up to isometric isomorphism).
\end{thm}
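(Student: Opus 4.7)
The plan is to use Proposition \ref{prp:BanachModelCompletion} as the bridge between model theory and the geometric definition of Gurarij, with Lemma \ref{lem:GurarijTarskiVaught} doing the heavy lifting on the Banach-space side.

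First I would recall the standard fact (which is essentially contained in the proof of \autoref{prp:BanachModelCompletion}) that since $T^*$ is a model companion of $T$, its models are exactly the existentially closed models of $T$, i.e., the existentially closed Banach spaces. The uniqueness part is then immediate: once the equivalence ``Gurarij $\Leftrightarrow$ model of $T^*$'' is established, the existence of a unique separable Gurarij space (up to isometric isomorphism) is just the $\aleph_0$-categoricity of $T^*$, which is already recorded in \autoref{prp:BanachModelCompletion}.

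The main work is therefore to identify ``existentially closed Banach space'' with ``Gurarij space''. I would do this by translating existential closure into the language of types over $F$ and then matching it to one of the equivalent conditions in \autoref{lem:GurarijTarskiVaught}. Concretely, a quantifier-free existential statement over parameters $\bar a \in F$ that is satisfied in some Banach extension $F \subseteq F'$ corresponds, after rewriting, to an open subset $U$ of $\tS_n(F)$ containing a realised type $\tp(\bar b/F)$ for some $\bar b \in (F')^n$; and conversely every type in $\tS_n(F)$ arises this way, with \autoref{fct:BanachSpaceAmalgamation} (or rather the construction $F[\xi]$ from \autoref{dfn:TypeOfTuple}) providing the requisite extension. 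Thus $F$ is existentially closed as a Banach space if and only if, for every $n$, the types realised in $F$ form a dense subset of $\tS_n(F)$ — which is precisely condition (2) of \autoref{lem:GurarijTarskiVaught}, equivalent to $F$ being Gurarij.

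Two small points deserve care, and are probably the main technical obstacles. First, in the ``unit ball'' formulation of \autoref{prp:BanachModelCompletion} one must check that the relevant existential statements and types stay within $F_{\leq 1}$, which is harmless because bounded tuples have bounded types in $\tS^{\leq 1}_n(E)$ and the amalgamation of \autoref{fct:BanachSpaceAmalgamation} does not inflate norms. Second, the passage between ``every open set meets the realised types'' and ``every existential sentence with parameters is approximately satisfied in $F$'' requires that open sets in $\tS_n(F)$ be (uniform limits of) sets defined by quantifier-free formulae with parameters in $F$, which follows from \autoref{dfn:TypeSpaceTopometricStructure} together with quantifier elimination for $T^*$. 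Once these two checks are in place, the chain \emph{Gurarij} $\Leftrightarrow$ \emph{existentially closed} $\Leftrightarrow$ \emph{model of $T^*$} is complete, and $\aleph_0$-categoricity delivers the final sentence of the theorem.
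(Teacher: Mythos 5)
Your argument is correct and follows essentially the same path as the paper: reduce the claim to density of realised types in $\tS_n(F)$ and then invoke \autoref{lem:GurarijTarskiVaught}. The only variation is in the model-theoretic bridge: the paper characterises models of $T^*$ via quantifier elimination and the topological Tarski--Vaught criterion (so $E \models T^*$ iff $E \preceq F$ for some extension $F \models T^*$), whereas you go directly through the standard fact that models of a model companion are exactly the existentially closed models of $T$; these are interchangeable once quantifier elimination is in hand, and your remark on the unit-ball versus full-space issue matches the paper's dilation argument.
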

\begin{proof}
  Let $E$ be a Banach space, and embed it in a model $F \vDash T^*$.
  Then, first, by quantifier elimination, $E$ is a model of $T^*$ if and only if $E \preceq F$.
  Second, by the topological Tarski-Vaught Criterion evoked above, $E \preceq F$ if and only if the set of types over $E$, in the sense of $\Th(F) = T^*$, realised in $E$, is dense.

  By \autoref{prp:BanachModelCompletion} the space of types over $E$ (in the sense of $T^* = \Th(E)$) is $\tS^{\leq 1}_n(E)$ as defined there.
  By a dilation argument, the set of types realised in $E$ is dense in $\tS_1(E)$ if and only if the set of types realised in $B(E)$ is dense in $\tS_1^{\leq 1}(E)$, and we conclude by \autoref{lem:GurarijTarskiVaught} (or, if one works with the whole space as an unbounded structure, the same holds without the dilation argument).
\end{proof}

As mentioned in the introduction, the isometric uniqueness of the separable Gurarij space was originally proved by Lusky \cite{Lusky:UniqueGurarij} using the Lazar-Lindenstrauss matrix representation of $L^1$ pre-duals.
The same was recently re-proved by Kubiś and Solecki \cite{Kubis-Solecki:GurariiUniqueness} using more elementary methods.
Upon careful reading, their argument essentially consists of showing that the separable Gurarij space is the Fraïssé limit of the class of finite-dimensional Banach spaces.
The first author points this out in \cite{BenYaacov:MetricFraisse} as an application of a general development of Fraïssé theory for metric structures (yielding yet another uniqueness proof).
From this point onward we shall leave continuous logic aside, and work entirely within the formalism of type spaces as introduced in \autoref{sec:QuantifierFreeTypes}.
As we shall see, uniqueness and existence of the separable Gurarij space also follow as easy corollaries from later results that do not depend explicitly on any form of formal logic (\autoref{cor:UniqueUniversalGurarij} and \autoref{lem:GurarijTypesCoMeagre}).

\begin{dfn}
  \label{dfn:AtomicBanachSpace}
  Let $E \subseteq F$ be Banach spaces.
  We say that $F$ is \emph{atomic} over $E$ if the type over $E$ of every finite tuple in $F$ is isolated.
\end{dfn}

By \autoref{prp:SeparableCategoricity}, every Banach space is atomic over $0$.

\begin{thm}
  \label{thm:AtomicGurarijEmbedding}
  Let $E \subseteq F_0 \subseteq F_1$ be Banach spaces with $\dim F_0/E$ finite and $F_1$ separable and atomic over $E$.
  Also let $\bG \supseteq E$ be a Gurarij space, and let $\varphi\colon F_0 \rightarrow \bG$ be an isometric embedding extending $\id_E$.
  Then for every $\varepsilon > 0$ there exist an isometric embedding $\psi\colon F_1 \rightarrow \bG$ extending $\id_E$ with $\| \psi\rest_{F_0} - \varphi \| < \varepsilon$.

  In particular, any separable Banach space atomic over $E$ embeds isometrically over $E$ in any Gurarij space containing $E$.
\end{thm}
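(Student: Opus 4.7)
My plan is a back-and-forth construction along a chain of finite-dimensional extensions. Using separability of $F_1$, fix an increasing chain $F_0 = F^{(0)} \subseteq F^{(1)} \subseteq \cdots \subseteq F_1$ with each $F^{(n)}$ finite-dimensional over $E$ and $\bigcup_n F^{(n)}$ dense in $F_1$; let $\bar b_n$ be a basis of $F^{(n)}$ modulo $E$ extending $\bar b_{n-1}$. I will recursively construct linear maps $\varphi_n\colon F^{(n)} \to \bG$ extending $\id_E$, with $\varphi_0 = \varphi$, each exact on $E$ but only \emph{approximately} isometric on the finite-dimensional quotient; the isometry defect will be driven to zero, and the desired $\psi$ will arise as the limit.

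The core is the single extension step from $\varphi_n$ to $\varphi_{n+1}$, combining the isolation of $\tp(\bar b_{n+1}/E)$ (given by atomicity of $F_1$ over $E$) with density of realised types in $\bG$ (\autoref{lem:GurarijTarskiVaught}). Let $\bar a$ extend $\bar b_n$ to $\bar b_{n+1}$, set $m = |\bar b_{n+1}|$, $\bar \beta_n = \varphi_n(\bar b_n)$, and record the defect $e_n := d\bigl(\tp(\bar \beta_n/E), \tp(\bar b_n/E)\bigr)$. By isolation, fix a topological neighbourhood $U \ni \tp(\bar b_{n+1}/E)$ contained in $B(\tp(\bar b_{n+1}/E), \mu_{n+1})$. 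For any $\delta_n > e_n$, consider the open set
\begin{gather*}
  V = \bigl\{ \zeta \in \tS_m(\bG) : \zeta\rest_E \in U \text{ and } \|x_i - \beta_{n,i}\|^\zeta < \delta_n \text{ for } i < |\bar b_n| \bigr\}.
\end{gather*}
Then $V$ is non-empty: combining \autoref{fct:BanachSpaceAmalgamation} with the ``infimum attained'' clause of \autoref{prp:TypeDistance}, one can amalgamate $F^{(n+1)}$ with $\bG$ over $E$ into a Banach space $H$ in which $\|b_{n,i} - \beta_{n,i}\|_H = e_n < \delta_n$ for all $i$; the type $\tp(\bar b_n, \bar a/\bG)$ computed in $H$ then lies in $V$. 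By \autoref{lem:GurarijTarskiVaught} we obtain $(\bar \beta'_n, \bar c_n) \in \bG^m$ with $\tp(\bar \beta'_n, \bar c_n/\bG) \in V$, and I define $\varphi_{n+1}$ as the unique linear extension of $\id_E$ sending $\bar b_n \mapsto \bar \beta'_n$ and $\bar a \mapsto \bar c_n$. Then $e_{n+1} \leq \mu_{n+1}$, and continuity of the coordinate functionals on the finite-dimensional $F^{(n)}/E$ yields $\|\varphi_{n+1}\rest_{F^{(n)}} - \varphi_n\|_{\text{op}} \leq K^{(n)} \delta_n$ for a constant $K^{(n)}$ depending only on $\bar b_n$.

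Fix a target $\varepsilon > 0$ and recursively pick $\mu_n, \delta_n$ so that $\mu_n \to 0$ (hence $e_n \to 0$), the inequality $\delta_n > e_n$ holds at every step (feasible because $e_n$ is controlled by $\mu_n$ chosen at the previous stage), and $K^{(n)} \delta_n < 2^{-n} \varepsilon$. Then for each fixed $m$, the sequence $(\varphi_n\rest_{F^{(m)}})_{n \geq m}$ is Cauchy in operator norm and converges; since $e_n \to 0$, the limit is exactly isometric on each $F^{(m)}$, hence extends by continuity to an isometric embedding $\psi\colon F_1 \to \bG$ extending $\id_E$, with $\|\psi\rest_{F_0} - \varphi\|_{\text{op}} \leq \sum_n K^{(n)} \delta_n < \varepsilon$. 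The ``in particular'' clause is the specialisation $F_0 = E$, $\varphi = \id_E$.

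The main obstacle is the coupled scheduling of two error sequences $(e_n)$ and $(\delta_n)$: we need $e_n \to 0$ (so the limit is exactly isometric), $\sum K^{(n)}\delta_n < \infty$ (so the $\varphi_n$'s converge in operator norm), and the strict inequality $\delta_n > e_n$ at every step (so that $V$ is non-empty). A recursive choice in which $\mu_n$ and $\delta_n$ are fixed only after $F^{(n)}$, and hence $K^{(n)}$, is known handles this cleanly.
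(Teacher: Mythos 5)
Your proof is correct. The key ingredients are the same as the paper's — isolation of the types $\tp(\bar b_n/E)$ turns topological density of realised types (\autoref{lem:GurarijTarskiVaught}) into metric proximity, and a Cauchy argument yields the limit — but you organise the iteration differently. The paper first reduces to $\dim F_1/F_0 = 1$ and, within that single step, runs an inner Cauchy sequence of tuples $\bar c_k \in \bG^{n+1}$ converging to an \emph{exact} realisation of the isolated type over $E$; the infinite chain over $F^{(n)}$ is then an outer iteration of exact isometric embeddings. You flatten these two levels into one: the maps $\varphi_n$ are allowed to be only approximately isometric at every stage, the amalgamation step shows the scheme never gets stuck provided $\delta_n > e_n$, and a single telescoping Cauchy argument both forces operator-norm convergence (via $\sum K^{(n)}\delta_n < \infty$) and drives the isometry defect to zero (via $e_n \le \mu_n \to 0$). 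The trade-off is essentially cosmetic: the paper's version keeps the invariant ``$\varphi_n$ is isometric'' but pays for it with a nested limit, while yours keeps a single loop at the cost of tracking the coupled error sequences $(e_n), (\delta_n), (\mu_n)$ explicitly. One small remark: your amalgamation step actually needs a double amalgamation (first $H'$ with $\bG$ over $\langle E,\bar\beta_n\rangle$, then with $F^{(n+1)}$ over $F^{(n)}$), but this is routine from \autoref{fct:BanachSpaceAmalgamation}, and your use of the attained-infimum clause of \autoref{prp:TypeDistance} to get the strict inequality $\|b_{n,i}-\beta_{n,i}\|_H \le e_n < \delta_n$ is exactly right.
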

\begin{proof}
  It is enough to prove this in the case where $\dim F_1/F_0 = 1$.
  We may then choose a basis $\bar a \in F_1^{n+1}$ for $F_1$ over $E$, such that in addition $a_0,\ldots,a_{n-1}$ generate $F_0$.
  By hypothesis, $\xi = \tp(\bar a/E) \in \tS_{n+1}(E)$ is isolated.
  Let $\rho\colon \tS_{n+1}(\bG) \rightarrow \tS_{n+1}(E)$ be the parameter restriction map, and let $K = \rho^{-1}(\xi)$, observing that for any $\varepsilon > 0$, we have that $B(K,\varepsilon) = \rho^{-1} B(\xi,\varepsilon)$ is a neighbourhood of $K$.
  Given $r > 0$, we construct a sequence of tuples $\bar c_k \in \bG^{n+1}$, each of which realises a type in $B(\xi,2^{-k}r)$, as follows.

  For $k = 0$, we let $V \subseteq \tS_{n+1}(\bG)$ be the set of semi-norms defined by $\|x_i - \varphi a_i\| < r$ for $i < n$, which is open and intersects $K$.
  Then $V \cap B(K,r)^\circ \neq \emptyset$ (where $\cdot^\circ$ denotes topological interior), and we choose $\bar c_0$ to realise some type there.
  Given $\bar c_k$, we let $U_k \subseteq \tS_{n+1}(\bG)$ be the set of semi-norms defined by $\|x_i - c_{k,i}\| < 2^{-k}r$ for $i \leq n$, which is again open and intersects $K$, and we choose $\bar c_{k+1}$ to realise a type in $U_k \cap B(K,2^{-n-1}r)^\circ$.

  We obtain a Cauchy sequence $(\bar c_k)$ converging to some $\bar c \in \bG^{n+1}$, whose type $\tp(\bar c/E)$, being the metric limit of $\tp(\bar c_k/E)$, must be $\xi$.
  Then the linear map $\psi\colon F_1 \rightarrow \bG$ that extends $\id_E$ by $a_i \mapsto c_i$ is an isometric embedding.

  Finally, reading through our construction, we have $\|\varphi a_i - c_i\| < 3r$ for all $i < n$, and choosing $r$ small enough, $\|\psi\rest_{F_0} - \varphi\|$ is as small as desired.
\end{proof}

In particular, any two separable Gurarij spaces atomic over $E$ embed in one another, but we can do better.

\begin{thm}
  \label{thm:AtomicGurarijIsomorphism}
  Let $\bG_i$ be separable Gurarij spaces atomic over $E$ for $i = 0,1$, and let $E \subseteq F \subseteq \bG_0$ with $\dim F/E$ finite.
  Also let $\varphi\colon F \rightarrow \bG_1$ be an isometric embedding extending $\id_E$.
  Then for any $\varepsilon > 0$ there exists an isometric isomorphism $\psi\colon \bG_0 \cong \bG_1$ extending $\id_E$ with $\|\psi\rest_F - \varphi\| < \varepsilon$.

  In particular, any two separable Gurarij spaces atomic over $E$ are isometrically isomorphic over $E$.
\end{thm}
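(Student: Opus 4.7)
I will prove this by a perturbative back-and-forth argument, alternately applying \autoref{thm:AtomicGurarijEmbedding} in the two directions between $\bG_0$ and $\bG_1$. Fix dense sequences $(a_j)_{j \in \bN}$ in $\bG_0$ and $(b_j)_{j \in \bN}$ in $\bG_1$, and positive reals $(\varepsilon_k)_{k \geq 1}$ with $\sum_k \varepsilon_k < \varepsilon$. The plan is to build inductively an increasing chain $F = F_0 \subseteq F_1 \subseteq \cdots \subseteq \bG_0$ of finite-dimensional extensions of $E$, together with isometric embeddings $\varphi_k\colon F_k \to \bG_1$ extending $\id_E$, with $\varphi_0 = \varphi$ and $\|\varphi_{k+1}\rest_{F_k} - \varphi_k\| < \varepsilon_{k+1}$; these will converge to the desired $\psi$.

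At odd stages $k+1 = 2j+1$ (the forth step) I simply set $F_{k+1} := F_k + \bR a_j$, which remains atomic over $E$ as a finite-dimensional subspace of $\bG_0$, and invoke \autoref{thm:AtomicGurarijEmbedding} to extend $\varphi_k$ to $\varphi_{k+1}$ with perturbation $< \varepsilon_{k+1}$. At even stages $k+1 = 2j+2$ (the back step) the aim is to produce $c_{k+1} \in F_{k+1}$ with $\|\varphi_{k+1}(c_{k+1}) - b_j\| < \varepsilon_{k+1}$, so that $b_j$ lands in the closure of the image of the limit map. Applying \autoref{thm:AtomicGurarijEmbedding} to $\varphi_k^{-1}\colon G_k := \varphi_k(F_k) \to \bG_0$ and to the atomic extension $G_k + \bR b_j \subseteq \bG_1$, I obtain an isometric $\tau\colon G_k + \bR b_j \to \bG_0$ with $\|\tau\rest_{G_k} - \varphi_k^{-1}\|$ as small as desired; then $c_{k+1} := \tau(b_j)$ and $F_{k+1} := F_k + \bR c_{k+1}$.

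The hard part will be controlling $\|\varphi_{k+1}(c_{k+1}) - b_j\|$, since \autoref{thm:AtomicGurarijEmbedding} as stated does not prescribe the image of the new generator. The way around is to re-run the Cauchy construction from the proof of \autoref{thm:AtomicGurarijEmbedding}, inside $\bG_1$ this time, with a richer initial neighbourhood. With $\bar a_k$ a basis of $F_k$ over $E$, the type $\tp(\bar a_k, c_{k+1}/E)$ in $\bG_0$ is metrically as close as we please to the isolated type $\zeta := \tp(\varphi_k(\bar a_k), b_j/E)$ in $\tS_{n+1}(E)$, by \autoref{prp:TypeDistance} and the fact that $\tau$ is near-isometric. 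I accordingly take the initial open neighbourhood $V \subseteq \tS_{n+1}(\bG_1)$ to impose both $\|x_i - \varphi_k(a_{k,i})\| < r$ for $i < n$ \emph{and} $\|y - b_j\| < r$ on the new variable $y$; metric-to-topological closeness at the isolated type $\zeta$, together with \autoref{fct:BanachSpaceAmalgamation} to realise $\tp(\bar a_k, c_{k+1}/E)$ in an extension close to $(\varphi_k(\bar a_k), b_j)$, ensures that $V$ still meets the fibre $\rho^{-1}(\tp(\bar a_k, c_{k+1}/E))$. The Cauchy construction then delivers an isometric $\varphi_{k+1}\colon F_{k+1} \to \bG_1$ extending $\varphi_k$, with both $\|\varphi_{k+1}\rest_{F_k} - \varphi_k\|$ and $\|\varphi_{k+1}(c_{k+1}) - b_j\|$ bounded by a small multiple of $r$, hence $< \varepsilon_{k+1}$ for $r$ small enough.

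Finally, for each $x \in F_k$ the sequence $(\varphi_{k+j}(x))_{j \geq 0}$ is Cauchy in $\bG_1$ with error $\sum_{l > k} \varepsilon_l \|x\|$, so the $\varphi_k$ converge pointwise on $\bigcup_k F_k$ to a linear isometry, which extends by uniform continuity to an isometric embedding $\psi\colon \bG_0 = \overline{\bigcup_k F_k} \to \bG_1$. Density of $(a_j)$ together with the forth steps gives the first equality; density of $(b_j)$ together with the back-step condition forces each $b_j \in \overline{\psi(\bG_0)}$, whence $\psi$ is surjective. Telescoping the perturbation estimates yields $\|\psi\rest_F - \varphi\| \leq \sum_{k \geq 1} \varepsilon_k < \varepsilon$, completing the proof.
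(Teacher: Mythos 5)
Your proof correctly implements the back-and-forth argument that the paper's proof invokes in a single sentence, alternating applications of \autoref{thm:AtomicGurarijEmbedding} in both directions. You rightly spot that the embedding theorem, as a black box, does not say where the new generator lands, which matters at the back steps, and your repair (re-running the Cauchy construction from the proof of \autoref{thm:AtomicGurarijEmbedding} with the additional constraint $\|y - b_j\| < r$) is sound: both $\xi := \tp(\bar a_k, c_{k+1}/E)$ and $\zeta := \tp(\varphi_k\bar a_k, b_j/E)$ are isolated, being realised in extensions atomic over $E$, and \autoref{lem:ParameterRestriction} produces a point of $\rho^{-1}(\xi)$ metrically close to the realised type $\tp(\varphi_k\bar a_k, b_j/\bG_1) \in \rho^{-1}(\zeta)$, so the enlarged open set $V$ does still meet $B(\rho^{-1}\xi, r)^\circ$. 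Incidentally, you could avoid re-opening the proof by invoking \autoref{thm:AtomicGurarijEmbedding} twice at each back step: first get $\sigma\colon G_k + \bR b_j \to \bG_0$ with $\|\sigma\rest_{G_k}-\varphi_k^{-1}\|$ small, set $F' := \sigma(G_k+\bR b_j)$, then apply the theorem again to extend $\sigma^{-1}\colon F'\to\bG_1$ across $F' \subseteq F' + F_k =: F_{k+1}$; using $\sigma\varphi_k\approx\id_{F_k}$ one checks directly that both $\|\varphi_{k+1}\rest_{F_k}-\varphi_k\|$ and $\|\varphi_{k+1}(\sigma b_j)-b_j\|$ are controlled by the chosen tolerance.

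There is one small gap at the very end, in the surjectivity argument. The back-step estimate you obtain is $\|\psi(c_{2j+2}) - b_j\| \leq \varepsilon_{2j+2} + \|c_{2j+2}\|\sum_{l>2j+2}\varepsilon_l$, a fixed positive quantity for each $j$, so the assertion that ``each $b_j \in \overline{\psi(\bG_0)}$'' does not follow as stated. The standard fix is to enumerate the countable dense subset of $\bG_1$ so that every element recurs at arbitrarily late stages (or to interleave a diagonal scheme); each $b$ in that set is then brought within error tending to zero of the image, hence lies in the closed set $\psi(\bG_0)$, and surjectivity follows. With this adjustment the proof is complete, and it is essentially the argument the paper leaves implicit.
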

\begin{proof}
  Follows from \autoref{thm:AtomicGurarijEmbedding} by a back-and-forth argument.
  Indeed, by induction on $n$, using \autoref{thm:AtomicGurarijEmbedding}, we construct finite dimensional subspaces $E \subseteq F_n \subseteq \bG_1$ and $E \subseteq F'_n \subseteq \bG_0$, as well as isometric embeddings $\varphi_n\colon F_n \hookrightarrow \bG_0$ and $\varphi'_n\colon F'_n \hookrightarrow \bG_1$ extending $\id_E$, such that:
  \begin{enumerate}
  \item $F_0 = F$ and $\varphi_0 = \varphi$.
  \item $F_n \subseteq F_{n+1}$, $F'_n \subseteq F'_{n+1}$.
  \item $\varphi_n(F_n) \subseteq F_n'$ and $\varphi'_n(F'_n) \subseteq F_{n+1}$.
  \item $\|\varphi'_n\varphi_n - \id_{F_n}\| + \|\varphi_{n+1}\varphi'_n - \id_{F'_n}\| < 2^{-n-1}\varepsilon$.
  \item $\overline{\bigcup F_n} = \bG_1$, $\overline{\bigcup F'_n} = \bG_0$.
  \end{enumerate}
  Once the construction is complete, we have
  \begin{gather*}
    \| \varphi_n - \varphi_{n+1}\| \leq \| \varphi_n - \varphi_{n+1} \varphi'_n \varphi_n\| + \| \varphi_{n+1} \varphi'_n \varphi_n - \varphi_{n+1}\| < 2^{-n-1} \varepsilon \|x\|.
  \end{gather*}
  It follows that the sequence $(\varphi_n)$ converges in norm to an isometric embedding $\psi\colon \bigcup F_n \hookrightarrow \bG_0$, which extends uniquely to $\bG_1 \hookrightarrow \bG_0$.
  We obtain $\psi'\colon \bG_0 \hookrightarrow \bG_1$ as a limit of $\varphi'_n$ similarly.
  Then $\psi$ extends $\id_E$, $\psi' = \psi^{-1}$, and $\|\psi\rest_F - \varphi\| < \varepsilon$, as desired.
\end{proof}

Since every Banach space is atomic over $0$, we obtain the uniqueness and universality of the separable Gurarij space.

\begin{cor}
  \label{cor:UniqueUniversalGurarij}
  Every two separable Gurarij spaces are isometrically isomorphic, and every separable Banach space embeds isometrically in any Gurarij space (separable or not).
\end{cor}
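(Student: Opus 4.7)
The plan is to specialise the two preceding theorems to the case of the trivial parameter space $E = 0$. The essential observation, already flagged in the text right after \autoref{dfn:AtomicBanachSpace}, is that by \autoref{prp:SeparableCategoricity} every type in $\tS_n(0)$ is isolated, so every Banach space is automatically atomic over $0$. Once this is noted, both conclusions of the corollary reduce to instances of the atomic results that have just been established.

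For uniqueness, I would argue as follows. Let $\bG_0$ and $\bG_1$ be two separable Gurarij spaces. The trivial subspace $0$ is isometrically contained in each, and both $\bG_0$ and $\bG_1$ are separable Gurarij spaces atomic over $0$ by the observation above. Thus the hypotheses of \autoref{thm:AtomicGurarijIsomorphism} are satisfied with $E = 0$ and (for instance) $F = 0$, $\varphi$ the unique embedding of $0$ into $\bG_1$; the conclusion yields an isometric isomorphism $\psi\colon \bG_0 \cong \bG_1$ extending the identity on $0$, that is, an isometric isomorphism $\bG_0 \cong \bG_1$.

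For universality, let $F$ be a separable Banach space and let $\bG$ be any Gurarij space. Then $0 \subseteq F$, $F$ is separable and atomic over $0$, and $\bG \supseteq 0$, so the ``in particular'' clause of \autoref{thm:AtomicGurarijEmbedding} (with $E = 0$) supplies an isometric embedding $F \hookrightarrow \bG$ extending $\id_0$. Note that \autoref{thm:AtomicGurarijEmbedding} does not require $\bG$ itself to be separable, which is exactly what the statement of the corollary demands.

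There is no real obstacle here: the preceding two theorems were precisely set up to yield these corollaries, and the only step one has to verify is that the trivial parameter space $E = 0$ lies in the scope of their hypotheses and that every Banach space is atomic over it, which is the content of \autoref{prp:SeparableCategoricity}.
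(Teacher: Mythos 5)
Your proposal is correct and is exactly the paper's argument: the paper simply notes that every Banach space is atomic over $0$ (by \autoref{prp:SeparableCategoricity}) and then reads off both conclusions from \autoref{thm:AtomicGurarijEmbedding} and \autoref{thm:AtomicGurarijIsomorphism} with $E = 0$. You have merely spelled out the details, so there is nothing to add.
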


We also obtain that the Gurarij space is \emph{approximately homogeneous}.

\begin{cor}
  \label{cor:HomogeneousGurarij}
  Let $\bG$ be a separable Gurarij space, let $F \subseteq \bG$ be finite-dimensional, and let $\varphi\colon F \rightarrow \bG$ be an isometric embedding.
  Then there exists an isometric automorphism $\psi \in \Aut(\bG)$ such that $\|\psi\rest_F -\varphi\|$ is arbitrarily small.

  Moreover, if $E \subseteq F$ is such that $\bG$ is atomic over $E$, and $\varphi\rest_E = \id$, then we may require that $\psi\rest_E = \id$ as well.
\end{cor}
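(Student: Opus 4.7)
The plan is to derive both assertions as direct consequences of \autoref{thm:AtomicGurarijIsomorphism}, by taking $\bG_0 = \bG_1 = \bG$. The theorem was stated precisely for pairs of separable Gurarij spaces atomic over a common parameter subspace $E$, and it asserts that an isometric embedding of a finite-dimensional intermediate subspace $F$ can be approximated by an isomorphism of the two Gurarij spaces extending $\id_E$. Here the ``two'' Gurarij spaces will simply both be $\bG$.

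For the moreover part, the hypotheses match directly: $\bG$ is by assumption a separable Gurarij space atomic over $E$, we have $E \subseteq F \subseteq \bG$ with $\dim F/E$ finite, and $\varphi\colon F \to \bG$ is an isometric embedding with $\varphi\rest_E = \id_E$. Applying \autoref{thm:AtomicGurarijIsomorphism} with $\bG_0 = \bG_1 = \bG$ produces an isometric isomorphism $\psi\colon \bG \cong \bG$ extending $\id_E$ with $\|\psi\rest_F - \varphi\|$ arbitrarily small; such a $\psi$ is an automorphism of $\bG$.

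For the main statement, we take $E = 0$. By \autoref{prp:SeparableCategoricity}, every Banach space, and in particular $\bG$, is atomic over the trivial space $0$, so the hypotheses of \autoref{thm:AtomicGurarijIsomorphism} are fulfilled, and the condition $\varphi\rest_E = \id_E$ is vacuous. The theorem again yields an automorphism $\psi \in \Aut(\bG)$ with $\|\psi\rest_F - \varphi\|$ arbitrarily small.

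Since the corollary is a repackaging of \autoref{thm:AtomicGurarijIsomorphism} in the special case where the two Gurarij spaces coincide, there is no real obstacle; the only choice to make is the parameter space ($E = 0$ for the main assertion, the given $E$ for the moreover), and the work has already been done in the back-and-forth argument of \autoref{thm:AtomicGurarijIsomorphism}.
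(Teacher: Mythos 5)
Your proof is correct and matches the paper's intent: the corollary follows immediately from \autoref{thm:AtomicGurarijIsomorphism} by taking $\bG_0 = \bG_1 = \bG$, with $E = 0$ for the main assertion (using that every Banach space is atomic over $0$) and with the given $E$ for the moreover part. The paper leaves the proof implicit, signalling it with ``Similarly'' after deriving \autoref{cor:UniqueUniversalGurarij} in the same way, and your write-up supplies exactly the intended argument.
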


\begin{ntn}
  \label{ntn:Gurarij}
  We shall denote by $\bG$ the unique separable Gurarij space.
  Similarly, for a separable Banach space $E$, we let $\bG[E]$ denote the unique atomic separable Gurarij space over $E$, if such an extension of $E$ exists.
  Observe that since all types over $0$ are isolated, $\bG = \bG[0]$.
\end{ntn}

Let $E$ be a separable Banach space.
Let $\Emb(E,\bG)$ denote the space of linear isometric embeddings $E \hookrightarrow \bG$, on which $\Aut(\bG)$ acts by composition.
Say that $\varphi \in \Emb(E,\bG)$ is an \emph{atomic embedding} if $\bG$ is atomic over $\varphi E$.

\begin{cor}
  \label{cor:GenericOrbit}
  Let $E$ be a separable Banach space.
  Equip $\Emb(E,\bG)$ and $\Aut(\bG)$ with the topology of point-wise convergence (the strong operator topology).
  \begin{enumerate}
  \item The space $\Emb(E,\bG)$ is Polish, the action $\Aut(\bG) \curvearrowright \Emb(E,\bG)$ is continuous and all its orbits are dense.
  \item If $\bG[E]$ exists, then the set of atomic embeddings $\varphi \in \Emb(E,\bG)$ is a dense $G_\delta$ orbit under this action.
  \item If $\bG[E]$ does not exist, then there are no atomic embeddings and all orbits are meagre.
  \end{enumerate}
\end{cor}
\begin{proof}
  The first item is easy and left to the reader (density is by \autoref{cor:HomogeneousGurarij}).

  It follows from \autoref{thm:AtomicGurarijIsomorphism} that the set $Z \subseteq \Emb(E,\bG)$ of atomic embeddings forms a single orbit under $\Aut(\bG)$.
  By definition, $Z \neq \emptyset$ if and only if $\bG[E]$ exists.
  Let $\sI_n \subseteq \tS_n(E)$ denote the set of isolated types.
  For $r > 0$, we know that $B(\sI_n,r)$ is a neighbourhood of $\sI_n$, so there exists an open set $U_{n,r}$ such that $\sI_n \subseteq U_{n,r} \subseteq B(\sI_n,r)$ (in fact one can show that $B(\sI_n,r)$ is open, but we shall not require this).
  For each $\bar b \in \bG^n$, we define $V_{\bar b,r} \subseteq \Emb(E,\bG)$ to consist of all $\varphi$ such that $\tp(\bar b/\varphi E) \in \varphi U_{n,r}$.
  It is easy to see that since $U_{n,r}$ is open, so is $V_{\bar b,r}$.
  Since the set of isolated types is metrically closed, we have
  \begin{gather*}
    Z = \bigcap_{n,\bar b \in \bG^n, r>0} V_{\bar b,r} = \bigcap_{n, \bar b \in \bG_0^n,k} V_{\bar b, 2^{-k}},
  \end{gather*}
  where $\bG_0 \subseteq \bG$ is any countable dense subset.
  Thus, if $Z \neq \emptyset$ it is a dense $G_\delta$ orbit.

  Consider now a non atomic embedding $\psi \in \Emb(E,\bG)$.
  Non atomicity means that $\bG$ realises some non isolated type in $\tS_n(\psi E)$, which we may write as $\psi \xi$, where $\psi$ is applied to the parameters of $\xi$, and $\xi \in \tS_n(E)$ is non isolated.
  By \autoref{lem:IsolatedTypesMetricallyClosed}, for $r > 0$ small enough, the closed metric ball $\overline B(\xi,r)$ is (topologically) closed with empty interior.
  For $\bar b \in \bG^n$, let $V_{\bar b} \subseteq \Emb(E,\bG)$ consist of all $\varphi$ such that $\tp(\bar b/\varphi E) \notin \overline B(\varphi \xi,r)$.
  Reasoning as above, each $V_{\bar b}$ is a dense open set, and the set of $\varphi \in \Emb(E,\bG)$ such that $\bG$ omits $\varphi \xi$ is co-meagre.
  Since this set is also disjoint from the orbit of $\psi$, we are done.
\end{proof}

We now turn to a criterion for the existence of $\bG[E]$.
Say that a type $\xi \in \tS_\bN(E)$ is a \emph{Gurarij type} if $E[\xi]$, the generated space in the sense of \autoref{dfn:TypeOfTuple}, is a Gurarij space.
By an abuse of notation, we shall use $X = \{x_i\}_{i \in \bN}$ to denote both the set of variables and the set of distinguished generators of $E[\xi]$.

\begin{lem}
  \label{lem:GurarijTypesCoMeagre}
  Let $E$ be a separable Banach space.
  Then the set of Gurarij types over $E$ is co-meagre in $\tS_\bN(E)$.
  Moreover, there exists a dense $G_\delta$ set $Z \subseteq \tS_\bN(E)$ such that if some $\xi \in Z$ generates $F = E[\xi]$, then $F$ is Gurarij and the set of generators $\{x_i\}_{i \in \bN} \subseteq F$ is dense.

  In particular, the separable Gurarij space $\bG$ exists.
\end{lem}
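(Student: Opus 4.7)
My plan is to realize $Z$ as a countable intersection of dense open subsets of $\tS_\bN(E)$ imposing two kinds of conditions on $\xi$: (a) that $(x_i)_{i \in \bN}$ is dense in $E[\xi]$, and (b) a Tarski--Vaught-style criterion forcing $E[\xi]$ to satisfy the hypothesis of \autoref{lem:GurarijTarskiVaught}(3). Fix a countable dense set $D \subseteq E$ and, for each $n$, a countable basis $\{U^{(n)}_k\}_k$ of the topology of $\tS_{n+1}(E)$ obtained from defining inequalities with rational coefficients, constant terms drawn from $D$, and rational endpoints.

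For (a), for each $e \in D$ and $k \in \bN$ the set
\[
G_{e,k} \;=\; \bigcup_{i \in \bN}\bigl\{\xi \in \tS_\bN(E) : \|e - x_i\|^\xi < 2^{-k}\bigr\}
\]
is open; it is dense because, given any $\xi_0$ and any basic neighborhood $V_0$ mentioning only the variables $x_0,\ldots,x_{N-1}$, one picks $i \geq N$ and takes $\xi_1$ to be the type of a realization of $\xi_0$ with $x_i$ relocated to $e$, which sits in $V_0$ since $V_0$ does not see $x_i$. For (b), given a tuple $\bar\iota = (i_0,\ldots,i_{n-1})$ of distinct naturals and $U = U^{(n)}_k$, let $\varphi_j \colon E(y_0,\ldots,y_{n-1},z) \to E(\bN)$ be the linear map extending $\id_E$ with $y_l \mapsto x_{i_l}$ and $z \mapsto x_j$ (for $j \notin \bar\iota$), and set
\[
V_{\bar\iota,U} \;=\; \bigcup_{j \notin \bar\iota} (\varphi_j)_* U, \qquad G_{\bar\iota,U} \;=\; V_{\bar\iota,U} \,\cup\, \bigl\{\xi : \xi\rest_{(x_{i_0},\ldots,x_{i_{n-1}})} \notin \overline{\pi(U)}\bigr\},
\]
where $\pi \colon \tS_{n+1}(E) \to \tS_n(E)$ is variable restriction, topologically open by \autoref{lem:VariableChange}. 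Both pieces are open (the first by \autoref{dfn:VariableChange}, the second by continuity of the restriction map). To show $G_{\bar\iota,U}$ is dense, consider any nonempty basic open $V_0 \subseteq \tS_\bN(E)$ mentioning only $x_0,\ldots,x_{N-1}$ (WLOG containing $\bar\iota$). By \autoref{lem:VariableChange}, the image $V_0\rest_{(x_{i_0},\ldots,x_{i_{n-1}})}$ is open in $\tS_n(E)$ and hence meets the dense open set $\pi(U) \cup \bigl(\tS_n(E) \setminus \overline{\pi(U)}\bigr)$. If it meets $\tS_n(E) \setminus \overline{\pi(U)}$, any lift lies in $V_0 \cap G_{\bar\iota,U}$. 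Otherwise, lift to $\xi_0 \in V_0$ with $\xi_0\rest_{(x_{i_0},\ldots,x_{i_{n-1}})} \in \pi(U)$, pick $\eta \in U$ with $\pi\eta = \xi_0\rest_{(x_{i_0},\ldots,x_{i_{n-1}})}$, amalgamate $E[\xi_0]$ with $E[\eta]$ over the common subspace generated by $E$ and $\bar x_{\bar\iota}$ via \autoref{fct:BanachSpaceAmalgamation} to produce $F_1 \supseteq E[\xi_0]$ containing a realization $a$ of $\eta$; then, picking $j \geq N$ and defining $\xi_1$ as $\xi_0$ with $x_j$ relocated to $a$, we get $\xi_1 \in V_0 \cap (\varphi_j)_* U$ since $V_0$ does not mention $x_j$.

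Set $Z = \bigcap_{e,k} G_{e,k} \;\cap\; \bigcap_{\bar\iota,n,k} G_{\bar\iota,U^{(n)}_k}$; this is the desired dense $G_\delta$. Any $\xi \in Z$ has $(x_i)_i$ dense in $F = E[\xi]$ by (a), and for the Gurarij property we invoke \autoref{lem:GurarijTarskiVaught}(3): every basic open $W \subseteq \tS_1(F)$ is defined by finitely many inequalities in $F$-parameters which, by density of $(x_i)_i \cup D$ in $F$, can be approximated arbitrarily well by $\bR$-combinations of some $\bar x_{\bar\iota}$ and elements of $D$, translating a slight shrinking of $W$ into a basic $U = U^{(n)}_k$ satisfying $\xi\rest_{(x_{i_0},\ldots,x_{i_{n-1}})} \in \pi(U)$; the $G_{\bar\iota,U}$ condition then forces $\xi \in V_{\bar\iota,U}$, yielding $j$ with $\xi\rest_{(\bar x_{\bar\iota},x_j)} \in U$, i.e., $\tp(x_j/F) \in W$. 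For the ``in particular'' clause, nonemptiness of $Z$ follows from Baire category applied to the compact Hausdorff subspace $\{\xi : \|x_i\|^\xi \leq C_i \text{ for all } i\}$ (compact by Tychonoff and sub-additivity of semi-norms) for a sequence $C_i \to \infty$ growing fast enough that all perturbations used in the density arguments stay inside the slice; specializing to $E = 0$ gives $\bG$.

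The main obstacle is the density of $G_{\bar\iota,U}$: the open set $V_{\bar\iota,U}$ alone is not dense, since a generic nearby $\xi_0$ may impose a type on $\bar x_{\bar\iota}$ incompatible with any extension in $U$. The disjunctive definition absorbs such $\xi$ into the complementary open piece, and on the ``compatible'' branch one runs the Banach space amalgamation of \autoref{fct:BanachSpaceAmalgamation}, crucially placing the new realization at an index $j$ outside the finite support of the prescribed basic neighborhood $V_0$. Verifying the Baire-category hypothesis for the final existence claim is a secondary technicality, handled by restricting to a suitably bounded compact subspace.
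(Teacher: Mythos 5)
Your approach runs close to the paper's: both encode the Tarski--Vaught criterion as a countable family of conditions of the form ``either the extension-type $U$ is inconsistent with $\xi$'s restriction, or some $x_j$ already witnesses it,'' with each condition a dense $G_\delta$ set, then intersect. The paper's version is slightly leaner: it works in $\tS_{X,y}(E)$ with a single fresh variable $y$, so the same countable base $\{U_n\}$ of $\tS_{X,y}(E)$ serves both to state the conditions $Z_n$ and to supply the witnesses in the density argument (choosing $U_k \subseteq U_n \cap [\emptyset\to y]_* W$ and then $[y\to x_k]_* U_k \subseteq Z_n \cap W$), and it uses the exact projection $[\emptyset\to y]^* U_n$ (open) rather than $\overline{\pi(U)}$ in the disjunction; density of $\{x_i\}$ in $F$ then falls out as a byproduct of the Tarski--Vaught criterion. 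Your subtuple-indexed version with $\overline{\pi(U)}$ also works, and your amalgamation step for the density of $G_{\bar\iota,U}$ is essentially the same maneuver as the paper's.

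There is, however, one genuine error. You assert that density of $\{x_i\}$ in $F = E[\xi]$ follows from your condition (a). But (a) only yields $D \subseteq \overline{\{x_i\}}$, hence $E \subseteq \overline{\{x_i\}}$, whereas $F$ is generated \emph{over} $E$ by the $x_i$ and can strictly exceed the closure of the \emph{set} $\{x_i\}$. (Take $\xi$ with $x_0$ outside $E$ and $x_1, x_2, \ldots$ running densely through $E$: condition (a) holds, yet $2x_0 \notin \overline{\{x_i\}}$.) Density actually follows from your condition (b), exactly as in the paper: given $a \in F$ and $\varepsilon > 0$, the open set $\{\zeta \in \tS_1(F) : \|x - a\|^\zeta < \varepsilon\}$ is nonempty, and your Tarski--Vaught machinery produces some $x_j$ realizing a subset of it, so $\|x_j - a\| < \varepsilon$. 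With that observation, condition (a) is redundant and the error disappears. Separately, the compact-slice Baire argument for nonemptiness is unnecessary and invites the bookkeeping issue you flag: since $E$ is separable, $E(\bN)$ is separable, and the evaluation map $\xi \mapsto (\|x\|^\xi)_{x \in D'}$ for a countable dense $D' \subseteq E(\bN)$ embeds $\tS_\bN(E)$ as a closed subspace of $\prod_{D'}[0,\infty)$, so $\tS_\bN(E)$ is Polish and hence Baire, making a dense $G_\delta$ automatically nonempty.
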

\begin{proof}
  For $k \in \bN$, let $\{W_{k,m}\}_{m\in \bN}$ be a countable basis for the topology of $\tS_k(E)$, which exists since $E$ is separable.

  Whenever $I \subseteq J$ we shall use $\pi_{J,I}\colon \tS_J(E) \rightarrow \tS_I(E)$ to denote the variable restriction map, namely, $\tp\bigl( (a_i)_{i \in J} / E \bigr) \mapsto \tp\bigl( (a_i)_{i \in I} / E \bigr)$, which is an open map by \autoref{lem:VariableChange}.
  Thus, for example, $\bigl\{ \pi_{\bN,k}^{-1}(W_{k,m}) : k,m \in \bN \bigr\}$ is a basis of open sets for $\tS_\bN(E)$.

  Let us fix some $n \in \bN$, introduce yet another formal variable $y$, and consider an open set $U \subseteq \tS_{n+1}(E) = \tS_{x_0,\ldots,x_{n-1},y}(E)$.
  Let $F = \tS_n(E) \setminus \pi_{n+1,n}(U)$, which is closed.
  For each $k \in \bN$, let $\varphi_k^*\colon \tS_\bN(E) \rightarrow \tS_{n+1}(E)$ be the variable change map given by sending $x_i \rightarrow x_i$ for $i < n$ and $y \rightarrow x_k$, as per \autoref{dfn:VariableChange}, namely, $\tp(a_0,a_1,\ldots/E) \mapsto \tp(a_0,\ldots,a_{n-1},a_k/E)$.
  We then define
  \begin{gather*}
    \widetilde{U} = \pi_{\bN,n}^{-1}(F) \cup \bigcup_k (\varphi_k^*)^{-1}(U).
  \end{gather*}
  This is the union of an open and a closed set in a second-countable compact space, so it is $G_\delta$.

  First, we claim that if $k \geq n$ then $\pi_{\bN,k}(\widetilde{U}) = \tS_k(E)$.
  Indeed, we have a commutative diagram
  \begin{gather*}
    \begin{xy}
      (0,2)*+{\tS_\bN(E)}="SN",
      (2,0)*+{\tS_k(E)}="Sk",
      (2,4)*+{\tS_{n+1}(E)}="Sn1",
      (4,2)*+{\tS_n(E)}="Sn",
      \ar^{\pi_{\bN,k}} "SN";"Sk",
      \ar_{\varphi^*_k} "SN";"Sn1",
      \ar^{\pi_{k,n}} "Sk";"Sn"
      \ar_{\pi_{n+1,n}} "Sn1";"Sn"
      \ar_{\pi_{\bN,n}} "SN";"Sn",
    \end{xy}
  \end{gather*}
  Given any type $\xi \in \tS_k(E)$ there are two possibilities.
  \begin{itemize}
  \item If $\pi_{k,n}(\xi) \in F$ then $\pi_{\bN,k}^{-1}(\xi) \subseteq \pi_{\bN,n}^{-1}(F) \subseteq \widetilde{U}$, so $\xi \in \pi_{\bN,k}(\widetilde{U})$.
  \item If $\pi_{k,n}(\xi) \notin F$ then there exists $\zeta \in U$ such that $\pi_{n+1,n}(\zeta) = \pi_{k,n}(\xi) = \chi$, say.
    Amalgamating $E[\xi]$ with $E[\zeta]$ over $E[\chi]$, as per \autoref{fct:BanachSpaceAmalgamation}, we obtain a normed space $F \supseteq E$, a tuple $\bar b \in F^k$ such that $\tp(\bar b/E) = \xi$, and $c \in F$ such that $\tp(b_0,\ldots,b_{n-1},c) = \zeta$.
    Let $\rho = \tp(\bar b,c,c,c,\ldots/E) \in \tS_\bN(E)$.
    Then $\varphi_k^*(\rho) = \zeta$, and again $\xi = \pi_{\bN,k}(\rho) \in \pi_{\bN,k}(\widetilde{U})$.
  \end{itemize}

  Second, we claim that $\widetilde{U}$ is dense in $\tS_\bN(E)$.
  Indeed, consider a basic open set $\pi_{\bN,k}^{-1}(W_{k,m}) \neq \emptyset$.
  Since $\pi_{\bN,k}(\widetilde{U}) = \tS_k(E) \supseteq W_{k,m}$, we have $\emptyset \neq \pi_{\bN,k}^{-1}(W_k) \cap \widetilde{U}$.

  Lastly, we claim that the desired set is
  \begin{gather*}
    Z = \bigcap_{n,m} \widetilde{W_{n+1,m}} \subseteq \tS_\bN(E).
  \end{gather*}
  It is indeed a dense $G_\delta$ set.
  Let $\xi \in Z$, let $F = E[\xi]$ be the generated space, and $\bar b = (b_i)_{i\in \bN}$ be the generators.
  Then it will suffice to show that for any open $\emptyset \neq U \subseteq \tS_1(F)$ there exists $k$ such that $\tp(b_k/F) \in U$: this clearly implies that $\bar b$ is dense in $F$, and by \autoref{lem:GurarijTarskiVaught}, $F$ is Gurarij.
  Let us fix some $\rho \in U$, which we may always write as $\tp(c/F)$ where $c$ lies in some $F' \supseteq F$.

  We define $\theta\colon \tS_1(F) \rightarrow \tS_{Xy}(E)$ to be the map sending $\tp(a/F)$ to $\tp(\bar b,a/E)$, as in the proof of \autoref{lem:VariableChange} (working over $E$ instead of $0$).
  This is a topological embedding, so we may write $U = \theta^{-1}(W)$ with $W \subseteq \tS_{X,y}(E)$ open and $\theta(\rho) = \tp(\bar b,c/E) \in W$, and possibly shrinking $W$ (and $U$), we may assume that $W$ is defined using only finitely many variables, say $\bar x,y = x_0,\ldots,x_{n-1},y$.
  In other words, we may assume that $W = \pi_{Xy,\bar x y}^{-1}(W_{n+1,m})$ for some $m$, so $\pi_{Xy,\bar xy} \circ \theta(\rho) = \tp(b_0,\ldots,b_{n-1},c/E) \in W_{n+1,m}$.
  Thus $\rho$ provides us with a witness that
  \begin{gather*}
    \pi_{\bN,n}(\xi) = \tp(b_0,\ldots,b_{n-1}/E) \in \pi_{n+1,n}(W_{n+1,n}).
  \end{gather*}
  On the other hand, we have $\xi \in Z \subseteq \widetilde{W_{n+1,m}}$, so there must exist some $k$ such that
  \begin{gather*}
    \pi_{Xy,\bar xy} \circ \theta\bigl(\tp(b_k/F)\bigr) = \tp(b_0,\ldots,b_{n-1},b_k/E) = \varphi_k^*(\xi) \in W_{n+1,m}.
  \end{gather*}
  We conclude that $\tp(b_k/F) \in (\pi_{Xy,\bar xy} \circ \theta)^{-1}(W_{n+1,m}) = U$, and the proof is complete.
\end{proof}

\begin{thm}[Omitting Types Theorem for Gurarij spaces]
  \label{thm:GurarijOmittingTypes}
  Let $E$ be a separable Banach space, and assume we are given, for each $n \in \bN$, a metrically open and topologically meagre set $X_n \subseteq \tS_n(E)$.
  Then there exists a separable Gurarij space $\bG \supseteq E$ such that in addition, for every $n$, no type in $X_n$ is realised in $\bG$ (we then say that $\bG$ \emph{omits} all $X_n$).
  Moreover, the set of Gurarij types that generate such spaces is co-meagre.
\end{thm}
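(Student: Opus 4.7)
The plan is to refine the dense $G_\delta$ set $Z \subseteq \tS_\bN(E)$ produced by \autoref{lem:GurarijTypesCoMeagre}, whose members $\xi$ generate separable Gurarij spaces $F = E[\xi]$ in which the canonical sequence $\{x_i\}_{i \in \bN}$ is dense in $F$. I will intersect $Z$ with countably many further dense open conditions that force each such $F$ to omit every $X_n$.

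The key reduction is: for $\xi \in Z$ with generated space $F$, the space $F$ realises some type in $X_n$ if and only if $\tp(x_{\bar k}/E) \in X_n$ for some strictly increasing $\bar k = (k_0 < \cdots < k_{n-1}) \in \bN^n$. The non-trivial direction is exactly where the \emph{metric} openness of $X_n$ enters. Given $\bar a \in F^n$ with $\tp(\bar a/E) \in X_n$, pick $r > 0$ with $B(\tp(\bar a/E), r) \subseteq X_n$. Since $F$ is a non-trivial Banach space every non-empty open subset of $F$ is infinite, hence meets the dense sequence $\{x_i\}$ in infinitely many points; so one may inductively choose $k_0 < k_1 < \cdots < k_{n-1}$ with $\|x_{k_i} - a_i\| < r$ for each $i$. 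Using $F$ as a common ambient space, the definition of the distance on $\tS_n(E)$ yields $d(\tp(x_{\bar k}/E), \tp(\bar a/E)) < r$, whence $\tp(x_{\bar k}/E) \in X_n$.

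For each $n \in \bN$ and each strictly increasing $\bar k \in \bN^n$, let $\pi_{\bar k}\colon \tS_\bN(E) \rightarrow \tS_n(E)$ denote the variable restriction $\xi \mapsto \xi\rest_{x_{\bar k}}$. Since $x_{k_0}, \ldots, x_{k_{n-1}}$ are linearly independent over $E$, \autoref{lem:VariableChange} shows $\pi_{\bar k}$ is continuous and topologically open. Using topological meagreness of $X_n$, write $X_n \subseteq \bigcup_{m \in \bN} F_{n,m}$ with each $F_{n,m}$ closed and nowhere dense. Continuous open maps pull back nowhere dense sets to nowhere dense sets: if $\overline{\pi_{\bar k}^{-1}(F_{n,m})}$ contained a non-empty open $U$, then $\pi_{\bar k}(U) \subseteq F_{n,m}$ would be a non-empty open subset of a closed nowhere dense set, a contradiction. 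So each $\pi_{\bar k}^{-1}(F_{n,m})$ is closed and nowhere dense in $\tS_\bN(E)$, and
\begin{gather*}
  W = \bigcap_{n,\, \bar k,\, m} \bigl( \tS_\bN(E) \setminus \pi_{\bar k}^{-1}(F_{n,m}) \bigr),
\end{gather*}
indexed over $n \in \bN$, strictly increasing $\bar k \in \bN^n$, and $m \in \bN$, is a countable intersection of dense open sets. Then $Z \cap W$ is a dense $G_\delta$ of Gurarij types whose generated spaces each omit every $X_n$, by the reduction of the previous paragraph.

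The main conceptual point requiring care is the bridge between the two structures on $\tS_n(E)$: the \emph{metric} openness of $X_n$ is precisely what reduces realisation of a bad type in $F$ to one of countably many generator tuples $x_{\bar k}$, while the \emph{topological} meagreness of $X_n$ is precisely what feeds a Baire category argument in $\tS_\bN(E)$ through the topologically open restriction maps of \autoref{lem:VariableChange}. Modulo this translation, the proof is a routine application of Baire category on top of \autoref{lem:GurarijTypesCoMeagre}.
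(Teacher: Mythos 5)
Your proof is correct and follows essentially the same route as the paper's: refine the dense $G_\delta$ set $Z$ of \autoref{lem:GurarijTypesCoMeagre} by removing, for each $n$ and each increasing $\bar k \in \bN^n$, the meagre preimage of $X_n$ under the continuous open restriction map $\tS_\bN(E) \to \tS_n(E)$, and use metric openness of $X_n$ together with density of $\{x_i\}$ in the generated space to show that any realisation of a type in $X_n$ forces some $\tp(x_{\bar k}/E)$ to land in $X_n$. The only differences are cosmetic: you spell out the nowhere-dense decomposition and the pullback argument that the paper compresses into ``so $[s]_* X_n$ is meagre,'' and you write the removal as an intersection of complements rather than the paper's (slightly mis-typed) set difference.
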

\begin{proof}
  Let $Z \subseteq \tS_\bN(E)$ be the set produced by \autoref{lem:GurarijTypesCoMeagre}.
  For each $n$, let $[\bN]^n = \{s \subseteq \bN\colon |s| = n\}$.
  Any $s \in [\bN]^n$ can be enumerated uniquely as an increasing sequence $\{k_0,\ldots, k_{n-1}\}$, and we then define $[s]\colon E(n) \rightarrow E(\bN)$ by $x_i \mapsto x_{k_i}$ for $i < n$.
  Then $[s]^*\colon \tS_\bN(E) \rightarrow \tS_n(E)$ is continuous and open, so $[s]_* X_n \subseteq \tS_\bN(E)$ is meagre.
  Since everything is countable,
  \begin{gather*}
    Z_1 = Z \setminus \bigcap_{n, s \in [\bN]^n} [s]_* X_n
  \end{gather*}
  is co-meagre as well.
  All we need to show is that if $\xi \in Z_1$ generates $\bG$, then $\bG$ omits $X_n$.
  Indeed, assume that some $\xi \in X_n$ is realised in $\bG$, say by $\bar a$.
  Since $X_n$ is metrically open, there exists $r > 0$ such that $B(\xi,r) \subseteq X_n$.
  Since the sequence $\{x_i\}$ is dense in $\bG$, and $\bG$ has no isolated points, there exists an increasing sequence $k_0 < \ldots < k_{n-1}$ such that $\|x_{k_j} - a_j\| < r$.
  But then $\tp(x_{\bar k}/E) \in X_n$, so $\xi \in [\bar k]_* X_n$, contradicting the choice of $\xi$ and completing the proof.
\end{proof}

Say that a Gurarij space $G$ is \emph{prime} over a separable subspace $E$ if it embeds isometrically over $E$ in every Gurarij space containing $E$.

\begin{cor}[Criterion for primeness over $E$]
  \label{cor:PrimenessCriterion}
  Let $G$ be a Gurarij space, and let $E \subseteq G$ be a separable subspace.
  Then the following are equivalent:
  \begin{enumerate}
  \item The space $G$ is prime over $E$.
  \item The space $G$ is separable and atomic over $E$, namely, $G = \bG[E]$.
  \end{enumerate}
\end{cor}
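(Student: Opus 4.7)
The plan is to prove the two implications separately, with (2)$\Rightarrow$(1) being essentially a restatement of results already established and (1)$\Rightarrow$(2) requiring the Omitting Types Theorem.

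For (2)$\Rightarrow$(1): If $G = \bG[E]$, i.e.\ $G$ is separable and atomic over $E$, then the \emph{in particular} clause of \autoref{thm:AtomicGurarijEmbedding} states exactly that $G$ embeds isometrically over $E$ into every Gurarij space containing $E$. So this direction is immediate.

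For (1)$\Rightarrow$(2), I would first dispense with separability: by \autoref{lem:GurarijTypesCoMeagre} the separable Gurarij space over $E$ exists, so primeness of $G$ forces $G$ to embed isometrically over $E$ into a separable space, hence to be separable itself. Then I would argue atomicity by contradiction: suppose some tuple $\bar a \in G^n$ realises a non-isolated type $\xi = \tp(\bar a/E)$. Using \autoref{lem:IsolatedTypesMetricallyClosed}(1) together with the lower semi-continuity of the distance (\autoref{lem:TypeSpaceTopologyMetric}), exactly as in the last paragraph of the proof of \autoref{cor:GenericOrbit}, choose $r > 0$ small enough that the metric closed ball $\overline B(\xi,r)$ is topologically closed of empty interior, hence nowhere dense. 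Then $X_n := B(\xi,r) \subseteq \overline B(\xi,r)$ is metrically open and topologically meagre. Apply \autoref{thm:GurarijOmittingTypes} with this single set $X_n$ (and $X_m = \emptyset$ for $m \neq n$) to produce a separable Gurarij space $G' \supseteq E$ that omits $\xi$. Primeness would then furnish an isometric embedding $G \hookrightarrow G'$ over $E$, and the image of $\bar a$ would realise $\xi$ in $G'$ — a contradiction.

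The main technical point is the availability of a metrically open, topologically meagre set $X_n$ containing a given non-isolated type; this is not completely formal, but the required argument has already been isolated in the proof of \autoref{cor:GenericOrbit} and can be invoked with minimal modification. Beyond that, the proof is just a direct application of the Omitting Types Theorem against the definition of primeness.
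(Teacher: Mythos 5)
Your proof is correct and follows essentially the same route as the paper's: (2)$\Rightarrow$(1) via the ``in particular'' clause of \autoref{thm:AtomicGurarijEmbedding}, and (1)$\Rightarrow$(2) by choosing $r$ via \autoref{lem:IsolatedTypesMetricallyClosed} so that $B(\xi,r)$ is metrically open and meagre, then invoking \autoref{thm:GurarijOmittingTypes}. The only cosmetic difference is that for separability you cite \autoref{lem:GurarijTypesCoMeagre} directly to produce a separable Gurarij space containing $E$, which is arguably a cleaner reference than the paper's appeal to \autoref{thm:AtomicGurarijEmbedding}.
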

\begin{proof}
  If $G = \bG[E]$, then it is prime over $E$ by \autoref{thm:AtomicGurarijEmbedding}.
  For the other direction, assume that $\bG$ is prime over $E$.
  Since $E$ is separable, it embeds (by \autoref{thm:AtomicGurarijEmbedding}) in a separable Gurarij space, so $\bG$ must be separable as well.
  Finally, assume toward a contradiction that $\bG$ realises some non isolated type $\xi$.
  By \autoref{lem:IsolatedTypesMetricallyClosed} there exists $r > 0$ such that the closed metric ball $\overline B(\xi,r)$ has empty interior.
  Since the distance is lower semi-continuous, the closed metric ball is topologically closed, and is therefore meagre, as is the open ball $B(\xi,r)$.
  By \autoref{thm:GurarijOmittingTypes}, there exists a separable Gurarij space $\bG \supseteq E$ that omits $B(\xi,r)$.
  Thus $G$ cannot embed over $E$ in $\bG$, a contradiction.
\end{proof}

\begin{prp}
  \label{prp:AtomicGurarijExistence}
  Let $E$ be a separable Banach space.
  Then $\bG[E]$ exists if and only if, for each $n$, the set of isolated types in $\tS_n(E)$ is dense.
\end{prp}
\begin{proof}
  For a given $n$, let $\sI_n$ be the set of isolated types in $\tS_n(E)$, and assume that it is dense.
  Then $B(\sI_n,r)$ contains a dense open set, and $\bigcap_{r > 0} B(\sI_n,r) = \overline \sI_n$ is co-meagre.
  By \autoref{lem:IsolatedTypesMetricallyClosed} we have $\sI_n = \overline \sI_n$, so $\tS_n(E) \setminus \sI_n$ is meagre and metrically open.
  Therefore, by \autoref{thm:GurarijOmittingTypes}, if $\sI_n$ is dense for all $n$, then an atomic separable Gurarij space over $E$ exists.

  Conversely, assume that $\bG[E]$ exists.
  Then the set of $n$-types over $E$ realised in $\bG[E]$ is dense (by \autoref{lem:GurarijTarskiVaught}), and they are all isolated.
\end{proof}

Model theorists will recognise \autoref{prp:AtomicGurarijExistence} as the usual criterion for the existence of an atomic model, and as such it is in no way particular to Banach spaces.
In the specific context of Banach spaces, however, it can be improved, yielding \autoref{thm:AtomicGurarijExistence}.

\begin{lem}
  \label{lem:IsolatedTypeOneVariable}
  For a type $\xi \in \tS_{\bar x}(E)$ the following are equivalent
  \begin{enumerate}
  \item The type $\xi$ is isolated.
  \item The type $\xi \rest_{\bar y}$ is isolated for every $\bar y \in E(\bar x)^m$ (and every $m$).
  \item The type $\xi \rest_y$ is isolated for every $y \in E(\bar x)$.
  \end{enumerate}
\end{lem}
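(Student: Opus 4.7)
The implication $(2) \Rightarrow (3)$ is immediate by taking $m = 1$. The real content is in $(3) \Rightarrow (1)$ and then $(1) \Rightarrow (2)$. The key technical input, derived from \autoref{prp:TypeDistance} by writing a general element $w = a + y$ of $E(\bar x)_1$ with $a \in E$ and $y \in 0(\bar x)_1 = \{\sum \lambda_i x_i : \sum |\lambda_i| = 1\}$, and using the symmetry $y \mapsto -y$, is the identity
\[
  d(\xi,\zeta) = \sup_{y \in 0(\bar x)_1} d(\xi\rest_y, \zeta\rest_y).
\]
Crucially, $0(\bar x)_1$ is a compact subset of the finite-dimensional space $\mathrm{span}\{x_0, \ldots, x_{n-1}\}$ equipped with the $\ell^1$-distance.

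For the main direction $(3) \Rightarrow (1)$, I fix $r > 0$, set $M = \max_i \|x_i\|^\xi + 1$, and cover $0(\bar x)_1$ by finitely many $\ell^1$-balls of radius $\delta < r/(8M)$ centred at $y_1, \ldots, y_N$. The hypothesis supplies topologically open sets $U_j \ni \xi\rest_{y_j}$ with $U_j \subseteq B(\xi\rest_{y_j}, r/4)$. I define the open neighbourhood
\[
  V = \{\zeta : \|x_i\|^\zeta < M \text{ for all } i\} \cap \bigcap_{j=1}^N \{\zeta : \zeta\rest_{y_j} \in U_j\},
\]
which contains $\xi$ and is open by continuity of $\cdot\rest_{y_j}$ from \autoref{lem:VariableChange}. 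For $\zeta \in V$ and $y \in 0(\bar x)_1$, the elementary bound $d(\eta\rest_y, \eta\rest_{y_j}) \leq \max_i \|x_i\|^\eta \cdot \|y - y_j\|_1 \leq M\delta$ (valid for $\eta = \xi$ and, thanks to the norm bound, for $\eta = \zeta$), combined with the triangle inequality in $\tS_1(E)$, yields $d(\xi\rest_y, \zeta\rest_y) < 2M\delta + r/4 < r/2$ uniformly in $y$. Taking the supremum and invoking the identity above shows $V \subseteq B(\xi, r)$, witnessing isolation of $\xi$.

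For $(1) \Rightarrow (2)$, I first prove the special case $(1) \Rightarrow (3)$. If $y \in E$, then $\xi\rest_y = \tp(y/E)$ is isolated because $\{\eta : \|t - y\|^\eta < \varepsilon\}$ is a topological neighbourhood of $\tp(y/E)$ contained in $B(\tp(y/E), \varepsilon)$ (by amalgamation via \autoref{prp:TypeDistance}). If $y \notin E$, then $\{y\}$ is linearly independent over $E$, and \autoref{lem:VariableChange} makes $\cdot\rest_y$ topologically open and Lipschitz (with constant $\|y\|_1$) for the metric; pushing a topological neighbourhood of $\xi$ lying inside $B(\xi, s/\|y\|_1)$ forward produces a topological neighbourhood of $\xi\rest_y$ inside $B(\xi\rest_y, s)$. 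To bootstrap to (2), I note that for arbitrary $\bar y \in E(\bar x)^m$ and $z \in E(\bar y)$, the chain rule for restriction gives $(\xi\rest_{\bar y})\rest_z = \xi\rest_{\varphi(z)}$, where $\varphi\colon E(\bar y) \to E(\bar x)$ is the substitution $y_i \mapsto y_i$; since $\varphi(z) \in E(\bar x)$, this type is isolated by the $(1) \Rightarrow (3)$ case already handled for $\xi$. Thus $\xi\rest_{\bar y}$ satisfies condition (3), and the already-established $(3) \Rightarrow (1)$ applied to $\xi\rest_{\bar y}$ delivers its isolation.

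The main obstacle lies in $(3) \Rightarrow (1)$: converting pointwise one-variable isolation into a single open neighbourhood controlling the multi-variable metric requires both compactness of $0(\bar x)_1$ and uniform Lipschitz continuity of $y \mapsto \eta\rest_y$, whose constant is $\max_i \|x_i\|^\eta$ and therefore varies with $\eta$. This is what forces the auxiliary norm bound $\|x_i\|^\zeta < M$ into the definition of $V$, so that the Lipschitz estimate applies uniformly to $\zeta$ in the neighbourhood.
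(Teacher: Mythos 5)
Your proof is correct, and the direction you rightly flag as carrying the content, $(3) \Rightarrow (1)$, is the same argument the paper executes (by invoking the proof of \autoref{prp:SeparableCategoricity}): cover the compact $\ell^1$-sphere $0(\bar x)_1$ by a finite net $\{y_j\}$, pull back open neighbourhoods of each $\xi\rest_{y_j}$ along the continuous map $\cdot\rest_{y_j}$, and glue them via the $\ell^1$-Lipschitz estimate $d(\eta\rest_y, \eta\rest_{y'}) \leq \max_i \|x_i\|^\eta \cdot \|y - y'\|_1$, whose dependence on $\eta$ forces the auxiliary constraint $\|x_i\|^\zeta < M$ into the neighbourhood. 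You make that constraint explicit; the paper obtains it implicitly by including $y = x_i$ in the net. Likewise the identity $d(\xi,\zeta) = \sup_{y \in 0(\bar x)_1} d(\xi\rest_y,\zeta\rest_y)$ is just a convenient repackaging of \autoref{prp:TypeDistance}.

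Where you genuinely diverge is $(1) \Rightarrow (2)$. The paper handles this head-on: it cites the Lipschitz openness of \autoref{lem:VariableChange} when $\bar y$ is linearly independent over $E$, and otherwise reduces to the case where $\bar y$ extends $\bar x$ and constructs an explicit open neighbourhood of $\xi\rest_{\bar y}$ from a neighbourhood of $\xi$ together with the constraints $\|y_j - a_j - \sum_i \lambda_{ij} x_i\| < r$. You instead prove only the single-variable case $(1) \Rightarrow (3)$ directly (splitting on $y \in E$, handled by the realised-type observation, and $y \notin E$, handled by \autoref{lem:VariableChange}), and then bootstrap to the full $(1) \Rightarrow (2)$ by the chain rule $(\xi\rest_{\bar y})\rest_z = \xi\rest_{\varphi(z)}$ plus the already-established $(3) \Rightarrow (1)$ applied to $\xi\rest_{\bar y} \in \tS_m(E)$. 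This is a nice simplification: it entirely sidesteps the analysis of linearly dependent tuples in $(1) \Rightarrow (2)$, at no extra cost since $(3) \Rightarrow (1)$ is needed anyway to close the cycle. The only cosmetic remark: writing the Lipschitz constant as ``$\|y\|_1$'' is slightly ambiguous when $y = a + \sum \lambda_i x_i$ has $a \neq 0$; what you mean, and what the estimate requires, is $\sum_i |\lambda_i|$, the $\ell^1$-mass of the variable part only.
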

\begin{proof}
  \begin{cycprf}
  \item[\impnext]
    When $\bar y$ are linearly independent over $E$, this follows from \autoref{lem:VariableChange}.
    Hence, for the general case, it is enough to consider the situation where $\bar y$, of length $m$, extends the original tuple of variables $\bar x$, of length $n$.
    For $j < m$ let us write $y_j = a_j + \sum_{i<n} \lambda_{ij} x_i$.
    Given $r > 0$, there exists by hypothesis an open set $U$ such that $\xi \in U \subseteq B(\xi,r)$, and let $V = (\cdot\rest_{\bar x})^{-1} U \subseteq \tS_{\bar y}(E)$.
    Intersecting $V$ with the open sets defined by $\| y_j - \sum_{i<n} \lambda_{ij} y_i - a_j \| < r$ we obtain an open set $V'$ with $\xi \rest_{\bar y} \in V' \subseteq B(\xi \rest_{\bar y},r')$ for some $r' = r'(r,\bar y)$ that goes to zero with $r$.
  \item[\impnext]
    Immediate.
  \item[\impfirst]
    We repeat the proof of \autoref{prp:SeparableCategoricity} (in fact, that result is merely a special case of the present one, alongside the fact that types in $\tS_1(0)$ are trivially isolated).
    Indeed, for each $N$ there exists by hypothesis a neighbourhood $U_N \ni \xi$ consisting of $\zeta$ such that
    \begin{gather*}
      \forall y \in X_N \qquad d(\zeta \rest_y,\xi \rest_y) < 1/N.
    \end{gather*}
    Using \autoref{prp:TypeDistance} we conclude as for \autoref{prp:SeparableCategoricity}.
  \end{cycprf}
\end{proof}

\begin{thm}
  \label{thm:AtomicGurarijExistence}
  The following are equivalent for a separable Banach space $E$:
  \begin{enumerate}
  \item The space $\bG[E]$ exists.
  \item For each $n$, the set of isolated types in $\tS_n(E)$ is dense.
  \item The set of isolated types in $\tS_1(E)$ is dense.
  \end{enumerate}
\end{thm}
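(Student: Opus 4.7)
The equivalence (1) $\Leftrightarrow$ (2) is already \autoref{prp:AtomicGurarijExistence}, and (2) $\Rightarrow$ (3) is immediate by taking $n = 1$. The only substantive content is (3) $\Rightarrow$ (1), and my plan is to construct $\bG[E]$ directly via the Omitting Types Theorem, using \autoref{lem:IsolatedTypeOneVariable} as the crucial new input to pass from isolation of $1$-types to isolation of $n$-types.

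Assume (3) and let $\sI_1 \subseteq \tS_1(E)$ denote the set of isolated types. I first check that $\tS_1(E) \setminus \sI_1$ satisfies the hypotheses of \autoref{thm:GurarijOmittingTypes}: it is metrically open because $\sI_1$ is metrically closed by \autoref{lem:IsolatedTypesMetricallyClosed}, and it is topologically meagre by the same argument used in the proof of \autoref{prp:AtomicGurarijExistence}. Indeed, \autoref{lem:TopometricOpen} gives that each $B(\sI_1, 1/k)$ is open, and it contains the dense set $\sI_1$ by hypothesis, so it is dense open; hence $\bigcap_k B(\sI_1, 1/k)$ is a dense $G_\delta$ which, by metric closedness of $\sI_1$, coincides with $\sI_1$ itself. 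Applying \autoref{thm:GurarijOmittingTypes} with $X_1 = \tS_1(E) \setminus \sI_1$ and $X_n = \emptyset$ for $n \neq 1$ yields a separable Gurarij space $\bG \supseteq E$ in which every realised $1$-type over $E$ is isolated.

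The remaining step is to deduce that $\bG$ is atomic over $E$, i.e., that every $n$-type realised in $\bG$ is isolated, and this is where \autoref{lem:IsolatedTypeOneVariable} is indispensable. Given $\bar a \in \bG^n$, let $\xi = \tp(\bar a/E)$. For any $y = b + \sum \lambda_i x_i \in E(\bar x)$ one has $\xi\rest_y = \tp\bigl( b + \sum \lambda_i a_i \,/\, E \bigr)$, a $1$-type of an element of $\bG$, hence isolated by construction of $\bG$. By \autoref{lem:IsolatedTypeOneVariable} this forces $\xi$ itself to be isolated, so $\bG$ is atomic over $E$ and equals $\bG[E]$, establishing (1).

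There is no serious obstacle once the tools are in place. The conceptual point is that \autoref{lem:IsolatedTypeOneVariable} reduces the usual atomic-model density criterion from ``every $\tS_n(E)$'' to the single dimension $n = 1$, allowing the Omitting Types Theorem to be invoked just once rather than dimension by dimension as in the general proof.
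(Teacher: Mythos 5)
Your proof is correct and follows essentially the same route as the paper: apply the Omitting Types Theorem to obtain a separable Gurarij extension of $E$ realising only isolated $1$-types, then invoke \autoref{lem:IsolatedTypeOneVariable} to upgrade from isolated $1$-types to atomicity. The only cosmetic quibble is that \autoref{lem:TopometricOpen} does not directly give that $B(\sI_1,1/k)$ is open (the lemma assumes its argument is open, which $\sI_1$ need not be); what you actually need, and what the paper's proof of \autoref{prp:AtomicGurarijExistence} records, is just that $B(\sI_1,1/k)$ contains a dense open set -- namely the union of the open neighbourhoods witnessing isolation of each $\xi \in \sI_1$, which is dense since $\sI_1$ is.
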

\begin{proof}
  We only need to show that if the set of isolated $1$-types is dense, then $\bG[E]$ exists.
  Indeed, proceeding as in the proof of \autoref{prp:AtomicGurarijExistence} there exists a separable Gurarij space $\bG \supseteq E$ that only realises isolated $1$-types over $E$.
  By \autoref{lem:IsolatedTypeOneVariable}, $\bG$ is atomic over $E$.
\end{proof}

\section{Isolated types over one-dimensional spaces}
\label{sec:IsolatedTypesDimensionOne}

Recall that one of the goals of this paper is to characterise isolated types over arbitrary $E$.
We start with the next-easiest case after $E = 0$, namely when $\dim E = 1$.
Even though this case will be fully subsumed in the general case, it is technically significantly simpler and deserves some specific comments, so we chose to treat it separately.

\begin{dfn}
  \label{dfn:NormingLinearFunctional}
  A \emph{norming linear functional} for $v \in E \setminus \{0\}$ is a continuous linear functional $\lambda \in E^*$ such that $\|\lambda\| = 1$ and $\lambda v = \|v\|$.

  By the Hahn-Banach Theorem, a norming linear functional always exists.
  We say that $v$ is \emph{smooth} in $E$ if the norming linear functional is unique.
\end{dfn}

\begin{prp}
  \label{prp:IsolatedTypesDimensionOne}
  Let $E$ be a Banach space, and let $v \in E \setminus \{0\}$.
  Then $E$ is atomic over $v$ if and only if $v$ is smooth in $E$.
\end{prp}
\begin{proof}
  By \autoref{lem:IsolatedTypeOneVariable}, we may assume that $E = \langle v, u \rangle$ and show that $\tp(u/v)$ is isolated if and only if $v$ is smooth in $E$.
  Assume first that for some $s,\varepsilon > 0$ and $D \in \bR$ we have
  \begin{gather*}
    \|v \pm su\| < \|v\| \pm sD + s\varepsilon.
  \end{gather*}
  It follows by the triangle inequality that
  \begin{gather*}
    \|v\| \pm tD - t\varepsilon \leq \|v \pm tu\| < \|v\| \pm tD + t\varepsilon, \qquad 0 < t \leq s,
  \end{gather*}
  or equivalently,
  \begin{gather*}
    \bigl| \|\pm rv + u\| - r\|v\| \mp D \bigr| < \varepsilon, \qquad r \geq s^{-1}.
  \end{gather*}

  If $v$ is smooth, let $\lambda$ be the unique norming functional, and let $D = \lambda u$.
  Then for any $\varepsilon > 0$ there exists $s$ as above.
  Then $\xi = \tp(u/v)$ satisfies the open condition $\|v \pm sx\| < \|v\| \pm sD + s\varepsilon$, which in turn implies that $\bigl| \|rv - u\| - \|rv - x\| \bigr| \leq 2\varepsilon$ for all $|r| \geq s^{-1}$.
  Finitely many additional open conditions can ensure that that the same holds for all $r$, yielding an open set $\xi \in U \subseteq B(\xi,3\varepsilon)$, showing that $\xi$ is isolated.

  Conversely, if $v$ is not smooth, then there are norming functionals $\lambda^\pm$, where $D^- = \lambda^- u < D^+ = \lambda^+ u$.
  Any neighbourhood of $\xi$ contains an open set $U$ that is defined by finitely many conditions of the form $\bigl| \|r_i v + x\| - \|r_i v + u\| \bigr| < \varepsilon$.
  We can construct a Banach space $E'$ generated by $\{v,w\}$, with $\|v\|$ as in $E$, such that $\zeta = \tp(w/v) \in U$ and $v$ is smooth in $E'$, with unique norming functional being defined by $\mu w = D^-$.
  This means that for $r$ big enough we have
  \begin{gather*}
    \|rv + w\| \approx r \|v\| + D^- \leq \|rv + u\| + D^- - D^+,
  \end{gather*}
  so $d(\xi,\zeta) \geq D^+ - D^-$.
  Therefore $B(\xi,D^+-D^-)$ is \emph{not} a topological neighbourhood of $\xi$, and $\xi$ is not isolated.
\end{proof}

We provided a fairly elementary argument to the ``only if'' part of \autoref{prp:IsolatedTypesDimensionOne}.
The machinery developed above provides us with a conceptually different argument, which in a sense we find preferable.
First, let us recall that by Mazur \cite[Satz~2]{Mazur:Konvexe}, the set of smooth points in the unit sphere of a separable Banach space is a dense $G_\delta$.
Assume now that $E$ is atomic over $v$, and without loss of generality, say that $\|v\| = 1$, and let $u \in \bG$ be smooth of norm one.
By \autoref{thm:AtomicGurarijEmbedding} there exists an isometric embedding of $E$ in $\bG$ sending $v$ to $u$, so $v$ must be smooth.

We obtain the following result of Lusky \cite{Lusky:UniqueGurarij}.

\begin{cor}
  The smooth points in the unit sphere of $\bG$ form a single dense $G_\delta$ orbit under isometric automorphisms.
\end{cor}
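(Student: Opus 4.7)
The plan is to derive the corollary as a direct consequence of \autoref{cor:GenericOrbit}(2) applied to a one-dimensional Banach space $E = \bR v$ with $\|v\| = 1$. The key identification is $\Emb(E,\bG) \cong \bG_{=1}$ via $\varphi \mapsto \varphi v$, which is a homeomorphism from the strong operator topology to the norm topology and is $\Aut(\bG)$-equivariant: post-composition on embeddings corresponds to the natural action on the unit sphere. Under this identification, \autoref{prp:IsolatedTypesDimensionOne} tells us that an embedding $\varphi$ is atomic (i.e., $\bG$ is atomic over $\varphi E$) precisely when $\varphi v$ is a smooth unit vector of $\bG$. Thus the set of smooth unit vectors corresponds bijectively, and $\Aut(\bG)$-equivariantly, to the set of atomic embeddings of $E$ into $\bG$.

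Before invoking part (2) of \autoref{cor:GenericOrbit} (rather than part (3)), I must check that $\bG[E]$ exists. This is immediate from Mazur's theorem, already recalled in the paper after \autoref{prp:IsolatedTypesDimensionOne}: the unit sphere of the separable space $\bG$ contains some smooth point $w$, and then by \autoref{prp:IsolatedTypesDimensionOne}, $\bG$ itself is a separable Gurarij space atomic over $\bR w \cong E$, witnessing the existence of $\bG[E]$.

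With that in hand, \autoref{cor:GenericOrbit}(2) states that the atomic embeddings of $E$ in $\bG$ form a single dense $G_\delta$ orbit under $\Aut(\bG)$; translating across the homeomorphism $\Emb(E,\bG) \cong \bG_{=1}$ yields exactly the assertion that the smooth points of $\bG_{=1}$ form a single dense $G_\delta$ orbit under isometric automorphisms. There is no genuine obstacle: the only step requiring any thought is the elementary identification of embeddings of a one-dimensional space with the unit sphere of the target, and all the substantive work has been done in the preceding results.
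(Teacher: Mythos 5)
Your proposal is correct. It takes a somewhat different route than the paper, which proves the corollary by citing \autoref{prp:IsolatedTypesDimensionOne} together with \autoref{thm:AtomicGurarijIsomorphism} (for the single-orbit claim) and implicitly relying on the citation of Mazur's theorem in the immediately preceding paragraph for the ``dense $G_\delta$'' claim. You instead route everything through \autoref{cor:GenericOrbit}(2), which packages both the single-orbit claim and the dense-$G_\delta$ claim, at the cost of needing to verify that $\bG[E]$ exists. What your approach buys is that the dense $G_\delta$ property of the smooth points now comes entirely out of the model-theoretic machinery (via the Omitting Types Theorem through \autoref{cor:GenericOrbit}), rather than from Mazur's classical theorem; you use Mazur only as a convenient shortcut to verify the hypothesis that $\bG[E]$ exists, i.e., that there is at least one smooth unit vector. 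This is more in the spirit of the paper's declared goal of exhibiting Mazur-type results as special cases of model-theoretic phenomena. One could in fact dispense with Mazur entirely: by \autoref{thm:AtomicGurarijExistence} and \autoref{prp:IsolatedTypesDimensionOne}, $\bG[E]$ exists for one-dimensional $E$ as soon as one checks the elementary fact that any two-dimensional norm can be perturbed slightly to make a prescribed vector smooth, so that isolated $1$-types are dense in $\tS_1(E)$. All the other steps in your argument --- the equivariant homeomorphism $\Emb(\bR v,\bG)\cong\bG_{=1}$, the identification of atomic embeddings with smooth unit vectors via \autoref{prp:IsolatedTypesDimensionOne} applied with $E:=\bG$ --- are correct and carefully stated.
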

\begin{proof}
  Immediate from \autoref{prp:IsolatedTypesDimensionOne} and \autoref{thm:AtomicGurarijIsomorphism}.
\end{proof}

\begin{cor}
  The distance strictly refines the topology on $\tS_n(F)$ for every $F \neq 0$.
\end{cor}
\begin{proof}
  It follows from \autoref{lem:ParameterRestriction} that if $E \subseteq F$, and the topology and distance agreed on $\tS_n(F)$, then they would also agree on $\tS_n(E)$, and every type in $\tS_n(E)$ would be isolated.
  However, by \autoref{prp:IsolatedTypesDimensionOne}, not all types over a $1$-dimensional $E$ are isolated.
\end{proof}

\section{The Legendre-Fenchel transformation of $1$-types}
\label{sec:LegendreFenchel}

In this section we recall and develop a few technical tools that will be used later in order to characterise isolated types over arbitrary $E$.
We start with the \emph{Legendre-Fenchel transformation}.
This being a duality construction, it will be convenient for us to put $E$ and its dual $E^*$ on a more equal footing.

Recall that a \emph{locally convex} topology on a vector space (for our purposes, only over $\bR$), is a vector space topology admitting a basis of convex neighbourhoods for $0$.
Examples of such topologies include the norm topology on a normed space $E$, as well as the weak topology $w$ on $E$ and the weak$^*$ topology $w^*$ on $E^*$.
Moreover, $(E,w)$ and $(E^*,w^*)$ are one another's dual in the locally convex category, yielding the desired symmetry.

\begin{conv}
  In the rest of the paper, unless a more restrictive hypothesis is stated, $E$ will denote a locally convex topological vector space over $\bR$.
  Its \emph{topological dual} $E^*$ is the space of continuous linear functionals, always equipped with the \emph{weak$^*$ topology}, namely the least topology in which $\hat v\colon \lambda \mapsto \lambda v$ is continuous for each $v \in E$.

  This applies in particular when $E$ is a normed space: we may refer to the dual norm via the sets $B(E^*)$ and $\partial B(E^*)$, or the corresponding properties $\|\lambda\| \leq 1$ and $\|\lambda\| = 1$, but the topology on $E^*$ is always taken to be the weak$^*$ topology, so $B(E^*)$ is compact for any normed $E$.
\end{conv}

The weak$^*$ topology is again locally convex, and the bi-dual $E^{**}$ is canonically identified, as a set, with $E$ (which would not always be true if for a normed space $E$ we calculated $E^{**}$ with respect to the dual norm on $E^*$).
This induces the \emph{weak topology} on $E$, which may be weaker than the original one, but gives rise to the same dual $E^*$ ($= E^{***}$).
The Hahn-Banach Theorem tells us that the weak topology on $E$ agrees with the original one when it comes to closed convex sets.
In other words, both topologies give rise to the same notion of a closed convex function (defined below), which, for our purposes, is good enough.

\begin{fct}[Hahn-Banach Theorem, see Brezis \cite{Brezis:AnalyseFonctionnelle}]
  \label{fct:HahnBanach}
  A closed convex subset of $E$ (a locally convex vector space) is the intersection of the closed half-spaces that contain it, and is therefore weakly closed (a half-space of $E$ is a set of the form $\{v : \lambda v \leq r\}$ where $\lambda \in E^*$ and $r \in \bR$).
\end{fct}

\begin{dfn}
  \label{dfn:ProperClosedConvexFunction}
  Following Rockafellar \cite{Rockafellar:ConvexAnalysis}, a \emph{proper convex function} on $E$ is a convex function $f\colon E \rightarrow \bR \cup \{\infty\}$ that is not identically $\infty$.
  It is \emph{closed} if it is lower semi-continuous (equivalently, by \autoref{fct:HahnBanach}, lower semi-continuous in the weak topology).
  We then define its \emph{domain} $\dom f= \{v \in E\colon f(v) < \infty\}$.
  A \emph{closed convex function} is either a proper one or one of the constant functions $f = \pm \infty$ (an \emph{improper} one).

  For an arbitrary function $f\colon E \rightarrow [-\infty,+\infty]$ we define $f^*\colon E^* \rightarrow [-\infty,+\infty]$ by
  \begin{gather*}
    f^*(\lambda) = \sup_{v \in E} \lambda v - f(v).
  \end{gather*}
  If $f$ is closed convex we call $f^*$ its \emph{conjugate}.
\end{dfn}

\begin{fct}
  \label{fct:ConvexConjugate}
  For any $f\colon E \rightarrow [-\infty,+\infty]$, the function $f^*$ is closed convex.
  If $f$ is closed and convex, then $f = f^{**}$ under the canonical identification $E = E^{**}$, and $f^*$ is proper if and only if $f$ is.

  Moreover, if $g$ is another closed convex function, then $\|f-g\| = \|f^*-g^*\|$, where $\|{\cdot}\|$ denotes the supremum norm, possibly infinite, and we agree that $|\pm\infty \mp\infty| = 0$.
\end{fct}
\begin{proof}
  For the finite-dimensional case, see Rockafellar \cite[Section~12]{Rockafellar:ConvexAnalysis}.
  The general case is proved essentially in the same fashion, using \autoref{fct:HahnBanach}.
  The moreover part is easy to check directly.
\end{proof}

\begin{lem}
  \label{lem:GeneratedConvexFunction}
  Let $X \subseteq E$ and suppose $f\colon X \rightarrow \bR \cup \{\infty\}$ is not identically $\infty$.
  Assume moreover that whenever $x \in X$ can be expressed as a limit of convex combinations $\sum_{i<\ell_k} \, t_{k,i} x_{k,i}$, where $x_{k,i} \in X$, we have $f(x) \leq \liminf_k \, \sum_{i<\ell_k} \, f_{k,i} f(x_{k,i})$.
  Then extending $f$ by $\infty$ outside $X$, we have that $f^*$ is a proper closed convex function on $E^*$ and $f = f^{**}\rest_X$.
\end{lem}
\begin{proof}
  Let $\epi f = \bigl\{ (v,s) : f(v) \leq s \} \subseteq X \times \bR \subseteq E \times \bR$, the \emph{epigraph} of $f$, let $Y = \cco(\epi f) \subseteq E \times \bR$ be the closed convex hull and define $g(v) = \inf \{ t : (v,t) \in Y \} \in \bR \cup \{\infty\}$.
  Then $g$ is a closed proper convex function, $g \leq f$, and the hypothesis implies that $g$ agrees with $f$ on $X$.
  Now $g^* \geq f^*$, so $f^*$ is in particular proper (it is automatically closed and convex), and $g = g^{**} \leq f^{**} \leq f$.
  Therefore $f^{**}\rest_X = f$.
\end{proof}

We recall that if $X \subseteq E$ is convex, then $\partial X$ is defined as the set of all $v$ such that, for some affine line $L$, $v$ is one of two distinct boundary points of $L \cap X$ in $L$.
The \emph{relative interior}, sometimes denoted $\mathrm{ri}(X)$, is defined as $X \setminus \partial X$: the set of all $v \in X$ such that for every affine line $L$ going though $v$, either $L \cap X$ is a single point or contains $v$ in its interior relative to $L$.
When $X$ generates a finite-dimensional affine subspace, this agrees with the usual topological notions as calculated in that space.

\begin{lem}
  \label{lem:ConvexBoundaryContinuity}
  Let $E$ be finite-dimensional and let $X \subseteq E$ be a compact convex subset.
  Let $f\colon X \rightarrow \bR$ be closed and convex, and assume that $f\rest_{\partial X}$ is continuous.
  Then $f$ is continuous.
\end{lem}
\begin{proof}
  We need to show that if $x_n \rightarrow x$ in $X$ and $f(x_n) \rightarrow \alpha \in [-\infty,\infty]$, then $f(x) = \alpha$.
  Since $f$ is closed, $f(x) \leq \alpha$, and let us assume that $f(x) < \alpha$.
  We may then assume that $f(x_n) > f(x) + \varepsilon$ for some $\varepsilon > 0$ and all $n$.
  Then the ray $R_n = x_n + \bR^{\geq 0} (x_n-x)$ intersects $\partial X$ at a single point, call it $y_n = x_n + s_n(x_n-x)$, and we may further assume that $y_n \rightarrow y$, where $y$ is necessarily also on the boundary.
  Notice that $x_n = \cfrac{ s_n x + y_n }{s_n + 1}$, so by convexity $f(y_n) \geq (s_n+1) f(x_n) - s_n f(x) > f(x) + (s_n+1) \varepsilon \geq f(x) + \varepsilon$.
  Since $f$ is continuous on the boundary we must have $y \neq x$.
  But then $\|y_n - x\|$ is bounded away from zero, so $s_n \rightarrow \infty$, so $f$ is unbounded on the boundary, even though $\partial X$ is compact and $f$ is continuous there, a contradiction.
\end{proof}

The relevance of convex conjugation to our context comes from the following alternative characterisation of $1$-types over a normed space $E$, introduced in \cite{BenYaacov:UniversalGurarijIsometryGroup} (see also Katětov \cite{Katetov:UniversalMetricSpaces} and Uspenskij \cite{Uspenskij:SubgroupsOfMinimalTopologicalGroups}).
From now on, $E$ denotes a normed space.

\begin{dfn}
  \label{dfn:ConvexKatetov}
  Let $X$ be an arbitrary metric space.
  A \emph{Katětov function} on $X$ is a function $f\colon X \rightarrow \bR$ satisfying $f(x) \leq f(y) + d(x,y)$ and $d(x,y) \leq f(x) + f(y)$ for all $x,y \in X$.
  The space of Katětov functions on $X$ is denoted $K(X)$.
  As with type spaces, we equip $K(X)$ with a double structure, the topology of point-wise convergence and the distance of uniform convergence (i.e., the supremum distance).
  With this topology and distance, $K(X)$ is a topometric space (that is to say that the distance refines the topology, and is lower semi-continuous).

  If $X$ is a normed space, or a convex subset thereof, we let $K_C(X)$ denote the space of \emph{convex} Katětov functions on $X$, with the induced topometric structure.
\end{dfn}

\begin{fct}
  \label{fct:ConvexKatetovOneTypes}
  Let $\xi \in \tS_x(E)$ be a $1$-type over a normed space $E$, and let $f_\xi(a) = \|x-a\|^\xi$ for $a \in E$.
  Then
  \begin{enumerate}
  \item The map $\xi \mapsto f_\xi$ defines a bijection between $S_1(E)$ and $K_C(E)$, whose inverse is given by
    \begin{gather*}
      \|\alpha x - a\|^\xi =
      \begin{cases}
        \|a\| & \alpha = 0 \\
        |\alpha| f_\xi(a/\alpha) & \alpha \neq 0.
      \end{cases}
    \end{gather*}
  \item This bijection is a topological homeomorphism and a metric isometry.
  \end{enumerate}
\end{fct}
\begin{proof}
  The first item is \cite[Lemma~1.2]{BenYaacov:UniversalGurarijIsometryGroup}.
  For the second, that the bijection is homeomorphic (in the respective topologies of point-wise convergence) follows easily from the characterisation of the inverse, while the isometry is exactly \autoref{prp:TypeDistance} for $1$-types.
\end{proof}

Consequently, from now on we shall identify $K_C(E)$ with $\tS_1(E)$.

\begin{fct}
  \label{fct:ConvexKatatovExtension}
  Let $X \subseteq Y$ be metric spaces, and for $f \in K(X)$ and $y \in Y$ define
  \begin{gather*}
    \tilde f(y) = \inf_{x \in X} f(x) + d(x,y).
  \end{gather*}
  Then $\tilde f \in K(Y)$ extends $f$, and the induced embedding $K(X) \subseteq K(Y)$ is isometric.
  When $Y = E$ is a normed space, $X \subseteq E$ is convex and $f \in K_C(X)$, the extension $\tilde f$ is convex as well, inducing an isometric embedding $K_C(X) \subseteq K_C(E)$.
\end{fct}
\begin{proof}
  The first assertion goes back to Katětov \cite{Katetov:UniversalMetricSpaces}, and the second is \cite[Lemma~1.3(i)]{BenYaacov:UniversalGurarijIsometryGroup}.
\end{proof}

\begin{qst}
  If $X \subseteq E$ is convex and compact (or totally bounded), then the topology and distance agree on $K_C(X)$, and it follows that the inclusion $K_C(X) \subseteq K_C(E)$ is also continuous, and therefore homeomorphic (since the restriction map is always continuous).
  At the other extremity, if $X = E$, then the inclusion is homeomorphic as well.
  What about general convex $X \subseteq E$?
\end{qst}

A closed proper convex function $f\colon E \rightarrow \bR \cup \{\infty\}$ is essentially the same thing as a closed convex function $f\colon X \rightarrow \bR$, with convex domain $X$, such that $\liminf_{v \rightarrow u} f(v) = \infty$ for all $u \in \overline X \setminus X$.
Indeed, we can get one from the other by restricting to the finite domain in one direction, or by extending by $\infty$ in the other.
A special case of the second form is when $X \subseteq E$ is closed and convex and $f \in K_C(X)$.
If $X$ is merely convex, every $f \in K_C(X)$, being $1$-Lipschitz, admits a unique extension to $\overline f \in K_C(\overline X)$, so requiring $X$ to be closed is not truly a constraint.

\begin{dfn}
  \label{dfn:Antipode}
  Let $f\colon E \rightarrow \bR \cup \{\infty\}$ be a proper closed convex function, and let $\lambda \in E^*$.
  \begin{itemize}
  \item We shall say that $f$ (or more precisely, $f^*$) satisfies the \emph{antipode inequality} at $\lambda$ if $f^*(\lambda) + f^*(-\lambda) \leq 0$.
  \item It satisfies the \emph{antipode identity} at $\lambda$ if $f^*(\lambda) + f^*(-\lambda) = 0$.
  \end{itemize}
\end{dfn}

\begin{lem}
  \label{lem:ConvexKatetovConjugate}
  Let $X \subseteq E$ be closed and convex and let $f \in K_C(X)$.
  Then
  \begin{enumerate}
  \item
    \label{item:ConvexKatetovConjugateDomain}
    The domain $\dom f^*$ contains $B(E^*)$, and if $\lambda \in \dom f^*$ with $\|\lambda\| > 1$, then
    \begin{gather*}
      f^*(\lambda) = \sup_{v \in \partial X} \lambda v - f(v).
    \end{gather*}
    In particular, if $X = E$ (so $\partial X = \emptyset$), then $\dom f^*$ is exactly the closed unit ball.
  \item
    \label{item:ConvexKatetovConjugateBoundedDomain}
    If $X$ is bounded and $\|\lambda\| = 1$, then $f^*(\lambda) = \sup_{v \in \partial X} \lambda v - f(v)$.
  \item
    \label{item:ConvexKatetovConjugateExtension}
    Let $\tilde f \in K_C(E)$ be as per \autoref{fct:ConvexKatatovExtension}.
    Then
    \begin{gather*}
      \tilde f^*(\lambda) =
      \begin{cases}
        f^*(\lambda) & \|\lambda\| \leq 1 \\
        \infty & \|\lambda\| > 1
      \end{cases}
      \quad \text{and} \quad
      \tilde f(v) = \sup_{\|\lambda\| \leq 1} \lambda v - f^*(\lambda).
    \end{gather*}
    In addition, if $v \notin X$, then $\tilde f(v) = \sup_{\|\lambda\| = 1} \lambda v - f^*(\lambda)$.
  \item
    \label{item:ConvexKatetovConjugateHB}
    When $X = E$, we have $f \in K_C(E) = \tS_1(E)$.
    For $\lambda \in \partial B(E^*)$, the least possible value of a norm-preserving extension of $\lambda$ at a realisation of $f$ is $f^*(\lambda)$.
  \item
    \label{item:ConvexKatetovConjugateCriterion}
    Let $g\colon E \rightarrow \bR \cup \{\infty\}$ be any closed proper convex function.
    Then $g \in K_C(E)$ if and only if $\dom g^* = B(E^*)$ and the antipode inequality $g^*(\lambda) + g^*(-\lambda) \leq 0$ holds for all $\lambda \in \partial B(E^*)$, or, equivalently, for all $\lambda \in B(E^*)$.
  \item
    \label{item:ConvexKatetovAntipodeIdentity}
    Assume $g \in K_C(E)$ is such that the antipode identity $g^*(\lambda) + g^*(-\lambda) = 0$ holds at some $\lambda \in B(E^*)$.
    Then $g^*$ is continuous at $\lambda$.
  \end{enumerate}
\end{lem}
\begin{proof}
  For \autoref{item:ConvexKatetovConjugateDomain}, first let $\|\lambda\| \leq 1$, and let $u \in X$ be fixed.
  Then for all $v \in X$ we have $f(u) + \|u\| \geq \|v - u\| - f(v) + \|u\| \geq \lambda v - f(v)$, whereby $f^*(\lambda) \leq f(u) + \|u\| < \infty$.
  Now let $\|\lambda\| > 1$, say $\lambda w > \|w\|$, and assume that $\lambda \in \dom f^*$.
  Then for each $v \in \dom f$, the ray $\{v + \alpha w\colon \alpha \geq 0\}$ cannot be contained in $X$ (or else $f^*(\lambda) = \infty$) and therefore intersects the boundary, say at $v'$.
  In this case $\lambda v - f(v) \leq \lambda v' - f(v')$, proving our assertion.
  When $\|\lambda\| = 1$ but $X$ is assumed to be bounded, for every $v \in X$ we can find $v' \in \partial X$ with $\lambda v - f(v) \leq \lambda v' - f(v') + \varepsilon$ for $\varepsilon$ arbitrarily small, whence \autoref{item:ConvexKatetovConjugateBoundedDomain}.

  For \autoref{item:ConvexKatetovConjugateExtension}, we already know that $\dom \tilde f^*$ is exactly the closed unit ball.
  In addition, $\tilde f \leq f$ implies $\tilde f^* \geq f^*$, and if $\|\lambda\| \leq 1$, then for every $v \in E$ and $u \in X$:
  \begin{gather*}
    \lambda v - \tilde f(v) = \lambda v - \inf_{u \in X} \bigl[ f(u) + \|v-u\| \bigr] \leq \sup_{u \in X} \lambda u - f(u) = f^*(\lambda),
  \end{gather*}
  whereby $\tilde f^*(\lambda) \leq f^*(\lambda)$.
  This gives us the first identity, and then \autoref{fct:ConvexConjugate} gives the second one.
  Now assume that $v \notin X$, so by \autoref{fct:HahnBanach} there exists $\mu \in \partial B(E^*)$ such that $\mu\rest_X < \mu v$.
  For any $\lambda \in B(E^*)$ there exists $\alpha \geq 0$ such that $\|\lambda + \alpha \mu\| = 1$.
  Then $f^*(\lambda + \alpha \mu) \leq f^*(\lambda) + \alpha \mu v$, or equivalently, $\lambda v - f^*(\lambda) \leq (\lambda + \alpha \mu)v - f^*(\lambda + \alpha \mu)$, whence it follows that $\tilde f(v) = \sup_{\|\lambda\| = 1} \lambda v - f^*(\lambda)$.

  Item \autoref{item:ConvexKatetovConjugateHB} is immediate.
  For \autoref{item:ConvexKatetovConjugateCriterion}, we have already seen that if $g \in K_C(E)$, then $\dom g^* = B(E^*)$, and the previous item implies that $g^*(\lambda) + g^*(-\lambda) \leq 0$ for $\lambda \in \partial B(E^*)$.
  Conversely, assume that $\dom g^* = B(E^*)$ and the antipode inequality holds.
  Then $g = g^{**}$ is necessarily $1$-Lipschitz, so $\dom g = E$.
  For distinct $v,u \in E$, let $\lambda \in \partial B(E^*)$ norm $v - u$.
  Then
  \begin{gather*}
    g(v) + g(u) \geq \lambda v - g^*(\lambda) - \lambda u - g^*(-\lambda) \geq \lambda(v-u) = \|v-u\|,
  \end{gather*}
  as desired.

  For \autoref{item:ConvexKatetovAntipodeIdentity}, let $\lambda_\alpha \rightarrow \lambda$.
  Then $g^*(\lambda_\alpha) \leq -g^*(-\lambda_\alpha)$ by the antipode inequality and $g^*(\lambda) = -g^*(-\lambda)$ by hypothesis.
  Since $g^*$ is lower semi-continuous,
  \begin{align*}
    g^*(\lambda)
    & \leq \liminf g^*(\lambda_\alpha)
      \leq \limsup g^*(\lambda_\alpha)
      \leq \limsup -g^*(-\lambda_\alpha)
    \\
    &
      = -\liminf g^*(-\lambda_\alpha)
      \leq -g^*(-\lambda) = g^*(\lambda).
  \end{align*}
  Therefore $\lim g^*(\lambda_\alpha) = g^*(\lambda)$, as desired.
\end{proof}

\begin{rmk}
  \label{rmk:ConvexKatetovConjugateExtensionSubSpace}
  Let $F \subseteq E$ be normed spaces and let $g \in K_C(F)$.
  Since $F$ is convex in $E$, there may be some ambiguity about $g^*$, so let $g^*_F$ denote the conjugate as a convex function on $F$ and let $g^*_E$ denote the conjugate of the extension by infinity to $E^*$.
  Also let $\tilde g \in K_C(E)$ denote the canonical extension of $g$.
  Then $g^*_E(\lambda) = g^*_F(\lambda\rest_F)$ for $\lambda \in E^*$, and by \autoref{lem:ConvexKatetovConjugate}\autoref{item:ConvexKatetovConjugateExtension}, if $\|\lambda\| \leq 1$, then this is further equal to $\tilde g^*(\lambda)$.
  Therefore, in what interests us, this ambiguity can never lead to any form of confusion.
\end{rmk}

We obtain a characterisation of the realised types.

\begin{lem}
  \label{lem:AntipodeIdentityRealisedType}
  Let $E$ be a Banach space, $f \in K_C(E)$.
  Then the following are equivalent:
  \begin{enumerate}
  \item The type $f$ is realised in $E$, i.e., there exists $v \in E$ such that $f(x) = \|x-v\|$ for all $x \in E$.
  \item The conjugate $f^*$ satisfies the antipode identity throughout $B(E^*)$.
  \item The conjugate $f^*$ satisfies the antipode identity at some $\lambda \in B(E^*) \setminus \partial B(E^*)$.
  \item We have $f^*(0) = 0$.
  \end{enumerate}
\end{lem}
\begin{proof}
  \begin{cycprf}
  \item We have $f^*(\lambda) = \lambda v$.
  \item Clear.
  \item Assume $f^*(0) \neq 0$.
    Then necessarily $f^*(0) < 0$ and $\lambda \neq 0$.
    Let $\alpha = \|\lambda\| < 1$ and $\mu = \lambda / \alpha$, so $\lambda = \alpha \mu + (1-\alpha) 0$.
    By convexity, $\alpha f^*(\mu) + \alpha f^*(-\mu) + 2(1-\alpha) f^*(0) \geq f^*(\lambda) + f^*(-\lambda)  = 0$, contradicting the antipode inequality at $\mu$.
  \item[\impfirst]
    We have $0 = f^*(0) = - \inf f$.
    Since $f$ is Katětov, any sequence $v_k$ such that $f(v_k) \rightarrow 0$ must be Cauchy, say with limit $v$.
    It follows that $f(v) = 0$ and consequently that $f$ is realised by $v$.
  \end{cycprf}
\end{proof}

\section{Characterising isolated types over arbitrary spaces}
\label{sec:IsolatedTypes}

In \cite{BenYaacov:UniversalGurarijIsometryGroup} a special kind of ``well behaved'' convex Katětov functions is distinguished.
These will play a crucial role here as well, and admit a natural characterisation in terms of their conjugates.

\begin{dfn}
  \label{dfn:ConvexKatetovLocal}
  We say that a function $f \in K_C(E)$ is \emph{local} if there are $f_k \in K_C(X_k)$, where each $X_k \subseteq E$ is convex and compact, such that $\tilde f_k \rightarrow f$ uniformly.
  The set of local functions in $K_C(E)$ was denoted in \cite{BenYaacov:UniversalGurarijIsometryGroup} by $K_{C,0}(E)$.
\end{dfn}

\begin{lem}
  \label{lem:ConvexKatetovLocal}
  Let $E$ be a normed space, and let $f \in K_C(E)$.
  Then $f$ is local if and only if $f^*$ is continuous on $B(E^*)$.
\end{lem}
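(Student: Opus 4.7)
The plan is to exploit the isometry $\|h-h'\|_\infty=\|h^*-(h')^*\|_\infty$ from the moreover part of \autoref{fct:ConvexConjugate}, combined with \autoref{lem:ConvexKatetovConjugate}\autoref{item:ConvexKatetovConjugateExtension}, which, for $g\in K_C(X)$ with $X\subseteq E$ compact and convex, identifies $\tilde g^*$ as $+\infty$ outside $E^*_{\leq 1}$ and as $\sup_{v\in X}\lambda v-g(v)$ inside. Under the convention $|\infty-\infty|=0$, this means that uniform convergence $\tilde f_k\to f$ on $E$ is equivalent to uniform convergence of the conjugates on the weak$^*$-compact set $E^*_{\leq 1}$.

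For the forward direction, suppose $f$ is local, witnessed by $f_k\in K_C(X_k)$ with $X_k$ convex and compact and $\tilde f_k\to f$ uniformly. It suffices to show that each $\tilde f_k^*\rest_{E^*_{\leq 1}}$ is weak$^*$-continuous, so that $f^*$, as a uniform limit of weak$^*$-continuous functions on the weak$^*$-compact ball, inherits the property. The key point is joint continuity of the evaluation map $\Phi\colon X_k\times E^*_{\leq 1}\to\bR$ sending $(v,\lambda)\mapsto \lambda v-f_k(v)$, when $E^*_{\leq 1}$ carries the weak$^*$ topology. Separate continuity is built in, and joint continuity follows from the fact that weak$^*$-convergent bounded nets in $E^*$ converge uniformly on the norm-compact set $X_k$, by Arzel\`a-Ascoli applied to the equi-$1$-Lipschitz restrictions of $E^*_{\leq 1}$ to $X_k$. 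Then $\tilde f_k^*(\lambda)=\sup_{v\in X_k}\Phi(v,\lambda)$ is continuous in $\lambda$ by the standard fact that the supremum over a compact parameter space of a jointly continuous function is continuous.

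For the converse, assume $f^*$ is weak$^*$-continuous on $E^*_{\leq 1}$. Pick a sequence $(v_j)_{j\in\bN}$ dense in $E$ and let $X_k=\operatorname{conv}(v_0,\ldots,v_k)$, a compact convex subset of $E$. Set $f_k=f\rest_{X_k}\in K_C(X_k)$, so that $\tilde f_k^*(\lambda)=\sup_{v\in X_k}\lambda v-f(v)$ on $E^*_{\leq 1}$. Each $\tilde f_k^*$ is weak$^*$-continuous by the previous paragraph, the sequence is increasing in $k$, and it converges pointwise to $f^*$: given $\lambda\in E^*_{\leq 1}$ and $\varepsilon>0$, pick $v\in E$ with $\lambda v-f(v)>f^*(\lambda)-\varepsilon$, and use the $1$-Lipschitzness of both $\lambda$ and $f$ to replace $v$ by a sufficiently close $v_j$ (hence a member of $X_k$ for $k$ large) without losing more than an additional $\varepsilon$. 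Dini's theorem on the weak$^*$-compact $E^*_{\leq 1}$ promotes the pointwise convergence to uniform convergence, and transferring through the Legendre-Fenchel isometry of the first paragraph yields $\tilde f_k\to f$ uniformly on $E$, so $f$ is local.

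The main obstacle is the joint-continuity argument in the forward direction, which is precisely what makes the norm-compactness of $X_k$ essential; thereafter, Dini's theorem together with the conjugation isometry closes the loop.
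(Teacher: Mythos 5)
Your proof follows the paper's route in its essentials: the Legendre--Fenchel isometry from \autoref{fct:ConvexConjugate} together with \autoref{lem:ConvexKatetovConjugate}\autoref{item:ConvexKatetovConjugateExtension} translate uniform convergence of the $\tilde f_k$ on $E$ into uniform convergence of their conjugates on the weak$^*$-compact ball $E^*_{\leq 1}$, and Dini's theorem upgrades the monotone pointwise convergence to uniform. The forward direction is sound; your Arzel\`a--Ascoli detour establishes exactly what the paper's explicit estimate $g^*(\lambda)-g^*(\mu) \leq 2r\|\lambda-\mu\|+\max_i(\lambda-\mu)v_i$ gives in a more hands-on way, namely equi-weak$^*$-continuity on the bounded ball.

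There is, however, a genuine gap in the converse. You ``pick a sequence $(v_j)_{j\in\bN}$ dense in $E$'', but the lemma assumes only that $E$ is a normed space, not that it is separable. When $E$ is non-separable no such sequence exists, and the exhaustion $X_k = \operatorname{conv}(v_0,\ldots,v_k)$ cannot approach $E$ --- a countable union of compacta is separable --- so the pointwise convergence $\tilde f_k^* \nearrow f^*$ on $E^*_{\leq 1}$ simply fails: the suprema over $X_k$ need not approach the supremum over $E$. The paper handles this with a preliminary reduction that you omit: since $E^*_{\leq 1}$ with the weak$^*$ topology embeds as a compact subset of $\prod_{v\in E}[-\|v\|,\|v\|]$, a weak$^*$-continuous function on it depends on only countably many coordinates, so there is a separable $F\subseteq E$ with $f^*(\lambda)$ depending only on $\lambda\rest_F$; one then checks that $f = \widetilde{f\rest_F}$ and reduces to the separable case, where your argument applies verbatim. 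Alternatively, you could repair your argument without the reduction by replacing the sequence with the net of all compact convex $X\subseteq E$ directed by inclusion: then $\tilde f_X^* \nearrow f^*$ pointwise, each $\tilde f_X^*$ is weak$^*$-continuous by your forward argument, Dini's theorem holds for monotone nets on a compact space, and a sequence can be extracted from the resulting uniformly convergent net to witness locality. As written, though, choosing a dense sequence silently strengthens the hypothesis to separability, and the weak$^*$-continuity of $f^*$ is needed precisely to justify the reduction that restores the full generality.
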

\begin{proof}
  First let $X \subseteq E$ be compact and let $g \in K_C(X)$.
  If $X \subseteq \bigcup_{i<n}  B(v_i,r)$, then $g^*(\lambda) - g^*(\mu) \leq 2r\| \lambda-\mu\| + \max_i (\lambda-\mu)v_i$, whence it follows that $g^*$ is continuous on every bounded subset of $E^*$, and in particular on $B(E^*)$.
  Since a uniform limit of continuous functions is continuous, if $f$ is local, then $f^*$ is continuous on $B(E^*)$.

  Conversely, assume that $f^*$ is continuous on $B(E^*)$.
  Since $\bR$ is second countable, there exists a separable subspace $F \subseteq E$ such that, for $\lambda \in B(E^*)$, the value of $f^*(\lambda)$ only depends on $\lambda\rest_F$.
  In this case, the map $f'\colon B(F^*) \rightarrow \bR$, $\mu \mapsto f^*(\mu')$, where $\mu'$ is any norm-preserving extension of $\mu$, is continuous.
  By \autoref{fct:ConvexConjugate} there exists a proper closed convex function $g\colon F \rightarrow \bR \cup \{\infty\}$ such that $g^* = f'$.
  By \autoref{lem:ConvexKatetovConjugate} we then have $g \in K_C(F)$ and by \autoref{rmk:ConvexKatetovConjugateExtensionSubSpace} we have $f^* = \tilde g^*$, so $f = \tilde g$.
  We may therefore assume that $E$ is separable, and choose an increasing sequence of compact convex subsets $X_k \subseteq E$ such that $\bigcup X_k$ is dense in $E$ (take closed balls of increasing radius, of sub-spaces of increasing finite dimension).
  For each $k$ let $f_k = f\rest_{X_k}$.
  Then $\tilde f_k \searrow f$ point-wise, and for $\lambda \in B(E^*)$ we have
  \begin{gather*}
    f^*(\lambda) = \sup_v \lambda v - f(v) = \sup_{v,k} \lambda v - f_k(v) = \sup_k f_k^*(\lambda),
  \end{gather*}
  i.e., $f_k^* \nearrow f^*$ point-wise on $B(E^*)$.
  Since each $f_k^*$ is lower semi-continuous, $f^*$ is continuous, and $B(E^*)$ is compact, this implies that $f_k^* \rightarrow f^*$ uniformly on $B(E^*)$, whereby $\tilde f_k \rightarrow f$ uniformly, and $f$ is local.
\end{proof}

A second ingredient is the following.

\begin{dfn}
  \label{dfn:BoundaryExtreme}
  Let $E$ be a normed space and let $r \geq 0$.
  We say that $f \in K_C(E)$ is \emph{$\partial$-$r$-extreme} (where $\partial$ could be pronounced \emph{boundary}) if for every $g \in K_C(E)$ whenever $g \leq f$ (i.e., $g^* \geq f^*$) we have $g^* \leq f^* + r$ on $\partial B(E^*)$.
  If $r = 0$ we omit it and say that $f$ is \emph{$\partial$-extreme}.
\end{dfn}

Notice that \autoref{dfn:BoundaryExtreme} would remain unchanged if we replaced the requirement that $g^* \geq f^*$ on $B(E^*)$ with $g^* \geq f^*$ on $\partial B(E^*)$.
Indeed, assume \autoref{dfn:BoundaryExtreme} holds of $f$ and $g^* \geq f^*$ on $\partial B(E^*)$.
Then $h' = \max(f^*, g^*)$ agrees with $g^*$ on $\partial B(E^*)$ and is therefore of the form $h^*$ for some $h \in K_C(E)$, by \autoref{lem:ConvexKatetovConjugate}\autoref{item:ConvexKatetovConjugateCriterion}, so \autoref{dfn:BoundaryExtreme} applies to $h$ and yields $g^* = h^* \leq f^* + r$ on $\partial B(E^*)$.

\begin{lem}
  \label{lem:BoundaryExtreme}
  Let $f,g \in K_C(E)$ with $f$ $\partial$-$r$-extreme and $g \leq f$.
  Then $g$ is $\partial$-$r$-extreme as well.

  Assume furthermore that $f = \widetilde{f\rest_X}$ for some convex $X \subseteq E$.
  Then outside $X$ we have $g \geq f - r$.
\end{lem}
\begin{proof}
  By hypothesis we have $g^* \geq f^*$, and we clearly obtain the first assertion, as well as $g^*(\lambda) \leq f^*(\lambda) + r$ for $\|\lambda\| = 1$.
  By \autoref{lem:ConvexKatetovConjugate}\autoref{item:ConvexKatetovConjugateExtension}, if $v \notin X$, then
  \begin{gather*}
    \tilde f(v)
    = \sup_{\|\lambda\| = 1} \, \lambda v - f^*(\lambda)
    \leq \sup_{\|\lambda\| = 1} \, \lambda v - g^*(\lambda) + r
    \leq g(v) + r,
  \end{gather*}
  as claimed.
\end{proof}

\begin{lem}
  \label{lem:BoundaryExtremeWithError}
  Let $E$ be a normed space, let $f \in K_C(E)$ be $\partial$-$r$-extreme, and let $\delta > 0$.
  Then $f + \delta$ is $\partial$-$(r+2\delta)$-extreme.
\end{lem}
\begin{proof}
  Assume not, so let $g \in K_C(E)$ satisfy $g \leq f + \delta$ (i.e., $g^* \geq f^* - \delta$), and let $\lambda \in \partial B(E^*)$ be such that $g^*(\lambda) > f^*(\lambda) + r + \delta$.
  By \autoref{fct:ConvexConjugate} there exists a closed convex $h$ on $E$ such that $h^* = \max \bigl(f^*, (g^*-\delta)\bigr)$.
  For $\mu \in B(E^*)$ we have $f^*(\mu) + f^*(-\mu) \leq 0$, $g^*(\mu) - \delta + g^*(-\mu) - \delta \leq -2\delta < 0$ and $f^*(\mu) + g^*(-\mu) - \delta \leq g^*(\mu) + f^*(-\mu) \leq 0$.
  Thus $h^*(\mu) + h^*(-\mu) \leq 0$, and since the domain of $h^*$ is the unit ball, $h \in K_C(E)$.
  Now, $h^* \geq f^*$ implies $h \leq f$, while on the other hand $h^*(\lambda) \geq g^*(\lambda) - \delta > f^*(\lambda) + r$, witnessing that $f$ is not $\partial$-$r$-extreme.
\end{proof}

\begin{lem}
  \label{lem:BoundaryExtremeNeighbourhood}
  Let $E$ be a normed space, let $f \in K_C(E)$ be $\partial$-$r$-extreme and local, and let $r' > r$.
  Then $f$ admits a neighbourhood $f \in W \subseteq K_C(E)$ such that $\diam W < r'$ and every $g \in W$ is $\partial$-$r'$-extreme.
\end{lem}
\begin{proof}
  Let $\delta > 0$ be small enough.
  By locality, there exists a compact convex set $X \subseteq E$ such that $f + \delta > f' = \widetilde{f\rest_X}$.
  Let $W \subseteq K_C(E)$ consist of all $g$ such that $|f-g| < \delta$ on $X$.
  Since $X$ is compact and Katětov functions are $1$-Lipschitz, $W$ is open, and we claim that it is as desired.

  If $g \in W$, then $g < f' < f + \delta$.
  By \autoref{lem:BoundaryExtremeWithError}, $f + \delta$ is $\partial$-$(r + 2\delta)$-extreme.
  By \autoref{lem:BoundaryExtreme} so are $f'$ and $g$, and since $f' = \widetilde{f'\rest_X}$ we have $g \geq f' - r - 2\delta \geq f - r - 2\delta$ outside $X$.
  Since $g \geq f-\delta$ inside $X$, we conclude that $f - r - 2\delta \leq g < f + \delta$ throughout, which is enough.
\end{proof}

\begin{thm}
  \label{thm:IsolatedTypeCharacterisation}
  Let $f \in K_C(E)$.
  Then $f$ is isolated if and only if it is both local and $\partial$-extreme.
\end{thm}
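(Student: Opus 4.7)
The plan is to prove both implications. The reverse is immediate: assuming $f$ is local and $\partial$-$0$-maximal, \autoref{lem:BoundaryMaximalNeighbourhood} applied with $r = 0$ yields, for every $r' > 0$, a topological neighborhood $W \ni f$ with $\diam W < r'$, so $W \subseteq B(f, r')$; since $r'$ is arbitrary, $f$ is isolated.

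For the forward direction, assume $f$ is isolated. To prove locality, fix $\delta > 0$ and pick an open $U \ni f$ with $U \subseteq B(f, \delta)$; shrinking $U$ to a basic open set, we may assume it is defined by conditions $|h(a_i) - f(a_i)| < \eta$ at finitely many points $a_1, \ldots, a_n \in E$. Let $X \subseteq E$ be the convex hull of $\{a_1, \ldots, a_n\}$, a compact convex set. By \autoref{fct:ConvexKatatovExtension}, $\widetilde{f\rest_X}$ is in $K_C(E)$ and agrees with $f$ on $X$, so $\widetilde{f\rest_X} \in U \subseteq B(f, \delta)$, whence $\|\widetilde{f\rest_X} - f\|_\infty < \delta$. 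As $\delta$ was arbitrary, $f$ is a uniform limit of such canonical extensions from compact convex sets, and hence local.

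To prove $\partial$-maximality, suppose for contradiction there exist $g \in K_C(E)$ with $g \leq f$ and $\lambda_0 \in E^*_{=1}$ with $g^*(\lambda_0) - f^*(\lambda_0) > 3\varepsilon$ for some $\varepsilon > 0$; fix $v_0 \in E$ with $\lambda_0 v_0 - g(v_0) > f^*(\lambda_0) + 2\varepsilon$. Isolation provides an open $U \ni f$ with $U \subseteq B(f, \varepsilon)$, defined by conditions at finitely many $a_i \in E$. It suffices to construct $h \in K_C(E)$ with (i) $h \in U$ and (ii) $h^*(\lambda_0) > f^*(\lambda_0) + \varepsilon$; by the moreover clause of \autoref{fct:ConvexConjugate}, (ii) yields $d(f, h) = \|h^* - f^*\|_\infty > \varepsilon$, contradicting $h \in U$.

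Constructing $h$ is the principal obstacle, and is carried out on the conjugate side in the spirit of the proof of \autoref{lem:BoundaryMaximalWithError}. Choose $u \in E$ norming (a weak$^*$-perturbation of) $\lambda_0$ via \autoref{fct:BishopPhelps}, and for parameters $\beta$ large and $\delta' > 0$ small, set $h^* = f^* \vee h_\beta$ where $h_\beta(\mu) = \mu v_0 - g(v_0) - \delta' + \beta(\mu u - 1)$. At $\lambda_0$, $h_\beta$ takes the value $\lambda_0 v_0 - g(v_0) - \delta'$, while off the face $\{\mu : \mu u = 1\}$ the term $\beta(\mu u - 1)$ drives $h_\beta$ well below $f^*$. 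The antipode inequality required by \autoref{lem:ConvexKatetovConjugate}\autoref{item:ConvexKatetovConjugateCriterion} holds by a finite case check using $g \leq f$ together with the antipode inequality for $g^*$. Locality, already established, provides weak$^*$-continuity of $f^*$ on the weak$^*$-compact ball $E^*_{\leq 1}$ via \autoref{lem:ConvexKatetovLocal}, letting us choose $u$, $\beta$, $\delta'$ so that the bump of $h^*$ is confined to a weak$^*$-neighborhood of $\lambda_0$ disjoint from the functionals governing the values $f(a_i)$; this guarantees $h \in U$ and completes the contradiction.
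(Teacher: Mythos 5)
Your argument for the reverse implication and for locality in the forward direction coincide with the paper's proof: both use \autoref{lem:BoundaryMaximalNeighbourhood} directly for one direction, and both extract locality by restricting $f$ to the compact convex hull of the finitely many parameters defining a small neighbourhood. For $\partial$-maximality, however, you take a genuinely different route from the paper, and it has a gap.

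You fix $g \leq f$ with $g^*(\lambda_0) > f^*(\lambda_0) + 3\varepsilon$, take a basic open $U \ni f$ with $U \subseteq B(f,\varepsilon)$ defined by conditions at finitely many $a_i$, and try to build $h \leq f$ with $h^*(\lambda_0) > f^*(\lambda_0) + \varepsilon$ and $h \in U$, setting $h^* = f^* \vee h_\beta$ for an affine $h_\beta$ peaked near $\lambda_0$. But the $a_i$ (and hence the supporting functionals at the $a_i$) are handed to you by isolation, not chosen by you, and nothing prevents $\lambda_0$ from being exactly a maximizer of $\mu \mapsto \mu a_i - f^*(\mu)$ for some $i$; your final sentence --- that locality lets you confine the bump to a weak$^*$-neighbourhood disjoint from the functionals governing the $f(a_i)$ --- is precisely the unjustified step in that case, and moreover the set $\{\mu \in E^*_{\leq 1} : \mu u = 1\}$ through which $\beta$ controls the bump is a face of the ball whose size you cannot force to fit inside a prescribed weak$^*$-neighbourhood of $\lambda_0$. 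The paper sidesteps this entirely by perturbing $g$ rather than $f$: it sets $g' = g \vee \bigvee_{i<n}(\lambda_i - f^*(\lambda_i))$ with $\lambda_i \in E^*_{<1}$ chosen so that $g'(v_i) > f(v_i) - \varepsilon$, so that $g' \leq f$, $g' \in W$, and --- because $\|\lambda_i\| < 1$ strictly --- $g'$ agrees with $g$ outside a ball. Isolation then yields $|f - g| \leq \diam W$ outside that ball, hence $f = g$ asymptotically, which gives $g^* = f^*$ on the unit sphere. The decisive advantage is that the auxiliary affine pieces are anchored at the prescribed points $v_i$ with strictly subunit slope, rather than near an uncontrolled $\lambda_0$ on the boundary of the dual ball, so there is no interference to rule out.
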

\begin{proof}
  One direction follows directly from \autoref{lem:BoundaryExtremeNeighbourhood}, so let us assume that $f$ is isolated.
  We can then construct a sequence of neighbourhoods $W_k$ of $f$ such that $\diam W_k \rightarrow 0$, each defined using finitely many parameters.
  We let $X_k$ be the (compact) convex hull of these parameters and $f_k = f\rest_{X_k}$.
  Then $\tilde f_k \in W_k$, so $\tilde f_k \rightarrow f$ uniformly and $f$ is local.

  It remains to show that $f$ is $\partial$-extreme, so let $g \in K_C(E)$ satisfy $g \leq f$.
  Let $W$ be a neighbourhood of $f$ of small diameter, say $g \in W$ if and only if $|g(v_i)-f(v_i)| < \varepsilon$ for some $v_i$, $i < n$.
  We know that $f(v) = \sup_{\|\lambda\| \leq 1} \, \lambda v - f^*(\lambda)$, and since a closed convex function in dimension one is continuous on its domain, we have in fact $f(v) = \sup_{\|\lambda\| < 1} \, \lambda v - f^*(\lambda)$.
  Therefore, for each $i < n$ we may choose $\lambda_i \in E^*_{<1}$ such that $f(v_i) - \varepsilon < \lambda_i v_i - f^*(\lambda_i) \leq f(v_i)$.
  Let $g' = \max\biggl(g, \max_{i<n} \, \bigl( \lambda_i - f^*(\lambda_i) \bigr)\biggr)$.
  Since $\|\lambda_i\| < 1$ for each $i$, $g'$ agrees with $g$ outside some ball.
  On the other hand, we have $g'(v_i) > f(v_i) - \varepsilon$ and $g' \leq f$, so $g' \in W$.
  Therefore $|f-g| \leq \diam W$ outside some ball.
  Since $\diam W$ can be taken arbitrarily small, $\lim_{\|x\| \rightarrow \infty} f(x)-g(x) = 0$.
  It follows that $f^* = g^*$ on $\partial B(E^*)$.
\end{proof}

Before stating a few more corollaries, let us recall a few definitions and facts.
\begin{dfn}
  Let $E$ be a locally convex space and let $X \subseteq E$ be convex.
  \begin{enumerate}
  \item A convex subset $F \subseteq X$ is called a \emph{face} of $X$ if a member of $F$ cannot be expressed as a proper convex combination of two points in $X$ that are not both in $F$.
    A \emph{proper face}, i.e., a face $F \neq X$, is always contained in the relative boundary $\partial X$.
  \item A face consisting of a single point is called an \emph{extreme point}.
    We shall denote the set of extreme points of $X$ by $\cE(X)$.
    We shall also denote by $\cE_0(X)$ the set of $v \in \cE(X)$ such that $\lambda v = \sup \, \lambda\rest_X$ for some $\lambda \in E^* \setminus \{0\}$.
  \end{enumerate}
\end{dfn}

By the Krein-Milman Theorem \cite[Chapitre~II.7, Théorème~1]{Bourbaki:EspacesVectorielsTopologiques}, if $X$ is compact and convex, then $X = \cco\bigl( \cE(X) \bigr)$.
In addition, by \cite[Chapitre~II.7, Proposition~2]{Bourbaki:EspacesVectorielsTopologiques}, if $v \in \cE(X)$, then the family of sets $\{u \in X : \lambda u > r\}$, where $\lambda \in E^*$ and $\lambda v > r \in \bR$, forms a basis of neighbourhoods for $v$ in $X$.
Since every such neighbourhood contains a member of $\cE_0(X)$ (any extreme point of $\{u \in X : \lambda u = \sup \, \lambda\rest_X\}$ will do), we have $\cE(X) \subseteq \overline{\cE_0(X)}$.
In the special case where $X = B(E^*)$, the set $\cE_0\bigl( B(E^*) \bigr)$ consists exactly of those $\lambda \in \cE\bigl( B(E^*) \bigr)$ for which some vector $v \neq 0$ is normed by $\lambda$.

\begin{cor}
  \label{cor:IsolatedTypeNecessaryCondition}
  Let $E$ be a Banach space, $f \in K_C(E)$ isolated.
  Let $F_v = \{\lambda \in B(E^*) : \lambda v = 1\}$ where $\|v\| = 1$.
  Then $f^*\rest_{F_v}$ is the greatest closed convex function less than $\lambda \mapsto -f^*(\lambda)$, i.e., $f^*(\lambda) = \sup_{u \in E} \, \inf_{\mu \in F_v} \, (\lambda-\mu) u - f^*(-\mu)$ for $\lambda \in F_v$.

  In particular, the antipode identity $f^*(\lambda) + f^*(-\lambda) = 0$ holds at every $\lambda \in \cE\bigl( B(E^*) \bigr)$.
\end{cor}
\begin{proof}
  For $u \in E$ and $\alpha > 0$ define
  \begin{gather*}
    h_u(\lambda) = \inf_{\mu \in F_v} \, (\lambda - \mu)u - f^*(-\mu),
    \qquad
    h_{u,\alpha}(\lambda) = \alpha(\lambda v - 1)  + h_u(\lambda).
  \end{gather*}
  Since $F_v$ is closed, applying \autoref{fct:ConvexConjugate} we have $f^*(\lambda) = \sup_{u \in E} \, \inf_{\mu \in F_v} \, (\lambda-\mu) u + f^*(\mu) \leq \sup_{u \in E} \, h_u(\lambda)$.
  For the converse inequality it will suffice to show that $f^* \geq h_u$ on $F_v$.
  Notice that $h_u$ is linear and continuous, and in addition, for every $\lambda \in F_v$ we have $h(\lambda) + f^*(-\lambda) \leq 0$.
  Fixing $\varepsilon > 0$, there exists an open set $V \supseteq F_v$ such that $h(\lambda) + f^*(-\lambda) < \varepsilon$ for all $\lambda \in V \cap B(E^*)$.
  By compactness of $B(E^*) \setminus V$, we know that $\lambda \mapsto \lambda v$ is bounded there below some $r < 1$.
  We may therefore assume that $V = \{\lambda : \lambda v > r\}$, and that $r > 0$, so $V \cap -V = \emptyset$.
  For $\alpha$ big enough we have $h_{u,\alpha}(\lambda) \leq \inf f^*$ for all $\lambda \in B(E^*) \setminus V$.
  Having fixed such $\alpha$, for $\lambda \in B(E^*) \cap V$ we have $h_{u,\alpha}(\lambda) - \varepsilon + f^*(-\lambda) \leq h_u(\lambda) + f^*(-\lambda) - \varepsilon \leq 0$.
  It follows that $\max \bigl( f^*, (h_{u,\alpha}-\varepsilon) \bigr)$ satisfies the antipode inequality, and is therefore of the form $g^*$ for some $g \in K_C(E)$.
  But then $g \leq f$, so $g^* = f^*$ on $\partial B(E^*)$ and in particular on $F_v$.
  Thus $f^* \geq h_{u,\alpha} - \varepsilon = h_u - \varepsilon$ on $F_v$.
  Since $\varepsilon$ was arbitrary, $f^* \geq h_u$ on $F_v$, as desired.

  It follows that the antipode identity holds on $\cE_0\bigl( B(E^*) \bigr)$.
  By continuity of $f^*$, it holds throughout $\cE\bigl( B(E^*) \bigr)$.
\end{proof}

Thus isolated types satisfy the antipode identity at some boundary points (and recall that by \autoref{lem:AntipodeIdentityRealisedType}, the antipode identity at a non-boundary point amounts to the type being realised).
When $\dim E = 1$ we recover the characterisation of isolated types given in \autoref{sec:IsolatedTypesDimensionOne}.

\begin{cor}
  Assume $\dim E = 1$ and let $\partial B(E^*) = \{\pm\lambda\}$.
  Then a type $f \in K_C(E)$ is isolated if and only if it satisfies the antipode identity at $\lambda$.
  By \autoref{lem:ConvexKatetovConjugate}\autoref{item:ConvexKatetovConjugateHB}, this is equivalent to: any vector $v \in E$ is smooth in the generated extension $E[f]$.
\end{cor}
\begin{proof}
  If $f$ is isolated then the antipode identity holds at $\lambda$ by \autoref{cor:IsolatedTypeNecessaryCondition}.
  Conversely, if the antipode identity holds at $\pm \lambda$, namely, on the entire boundary, then $f$ is $\partial$-extreme.
  Since, in dimension one, every closed convex function is continuous, $f$ is isolated.
\end{proof}

\section{Existence and non-existence results}
\label{sec:Existence}

This section consists of examples of various cases where densely many isolated types are known to exist or not to exist.
We do \emph{not} have a full characterisation of separable spaces $E$ such that isolated types over $E$ are dense.

\begin{dfn}
  \label{dfn:StronglyBoudaryExtreme}
  Let $E$ be a Banach space.
  We say that $f \in K_C(E)$ is \emph{strongly $\partial$-extreme} if
  \begin{enumerate}
  \item The antipode identity $f^*(\lambda) + f^*(-\lambda) = 0$ holds on $\cE(B(E^*))$.
  \item The values of $f^*$ on $\partial B(E^*)$ are maximal given the values of $f^*$ at the extreme points and the fact that $f^*$ is convex.
  \end{enumerate}
\end{dfn}

Clearly, a strongly $\partial$-extreme type is in particular $\partial$-extreme.

\begin{lem}
  \label{lem:IsolatedTypesFiniteDimension}
  Let $E$ be a normed space.
  \begin{enumerate}
  \item Then the set of strongly $\partial$-extreme $f \in K_C(E)$ is dense.
  \item Assume moreover that $\dim E < \infty$ and $f^*\rest_{\partial B(E^*)}$ is continuous whenever $f \in K_C(E)$ is strongly $\partial$-extreme.
    Then the isolated types over $E$ are dense and $\bG[E]$ exists.
  \end{enumerate}
\end{lem}
\begin{proof}
  In order to show that the strongly $\partial$-extreme types are dense, let $U$ be open and $f \in U$.
  We may assume that $U$ consists of all $g \in K_C(E)$ such that $|g(v_i) - f(v_i)| < \varepsilon$ for $i < n$.
  Let $X = \co(v_i : i < n) \subseteq E$, and replacing $f$ with $\widetilde{f\rest_X}$ we may assume that $f$ is local, i.e., that $f^*$ is continuous.
  For each $i < n$ fix $\lambda_i \in B(E^*)$ such that $f(v_i) + f^*(\lambda_i) < \lambda_i v_i  + \varepsilon$, and we may require $\|\lambda_i\| < 1$.
  Define
  \begin{gather*}
    \hat f(\lambda) =
    \begin{cases}
      \half[f^*(\lambda) - f^*(-\lambda)] & \lambda \in \cE(B(E^*)), \\
      f^*(\lambda_i) & \lambda = \lambda_i.
    \end{cases}
  \end{gather*}
  Then $\hat f$ satisfies the hypotheses of \autoref{lem:GeneratedConvexFunction}, and is therefore the restriction of $g^*\colon B(E^*) \rightarrow \bR$ for some $g \in K_C(E)$.
  We have $g^* \geq f^*$, i.e., $g \leq f$, and $g(v_i) \geq \lambda_i v_i - g^*(\lambda_i) = \lambda_i v_i - f^*(\lambda_i) > f(v_i) - \varepsilon$, so $g \in U$.
  Also, $g^*$ is strongly $\partial$-extreme by construction.

  Assume now that $E$ is as in the second item.
  By \autoref{lem:ConvexBoundaryContinuity}, if $f \in K_C(E)$ is strongly $\partial$-extreme then $f^*$ is continuous, so $f$ is isolated by \autoref{thm:IsolatedTypeCharacterisation}.
  It follows that the isolated types are dense in $K_C(E) = \tS_1(E)$, so $\bG[E]$ exists by \autoref{thm:AtomicGurarijExistence}.
\end{proof}

\begin{rmk}
  \label{rmk:IsolatedTypeReflexive}
  Over a reflexive Banach space $E$, every isolated type is strongly $\partial$-extreme, by \autoref{cor:IsolatedTypeNecessaryCondition} and the fact that every $\lambda \in \partial B(E^*)$ norms some $v \neq 0$.
\end{rmk}

\renewcommand{\labelenumii}{\theenumii}
\renewcommand{\theenumii}{\textnormal{(\alph{enumii})}}

\begin{thm}
  \label{thm:IsolatedTypesFiniteDimension}
  Let $E$ be a normed space of finite dimension.
  Then the isolated types in $K_C(E)$ are dense if either of the following holds:
  \begin{enumerate}
  \item \label{item:IsolatedTypesFiniteDimensionSimplex}
    Every face of $B(E^*)$ of dimension at most $\dim E - 2$ is a simplex.
    This holds in particular whenever $\dim E \leq 3$.
    Special cases of this include:
    \begin{enumerate}
    \item \label{item:IsolatedTypesFiniteDimensionSimplexAll}
      Every proper face of $B(E^*)$ is a simplex.
      This holds in particular whenever $\dim E \leq 2$.
      In this case $\bG$ is atomic over $E \subseteq \bG$ if and only if every $\lambda \in E^*$ admits a unique extension of the same norm to $\bG$.
    \item The space $E$ is smooth, i.e., every $v \in E$ is smooth.
      In this case $\bG$ is atomic over $E \subseteq \bG$ if and only if every $v \in E$ is smooth in $\bG$.
    \end{enumerate}
  \item \label{item:IsolatedTypesFiniteDimensionPolyhedral}
    The space $E$ is polyhedral.
  \end{enumerate}
\end{thm}
\begin{proof}
  In each case we apply \autoref{lem:IsolatedTypesFiniteDimension}, so we assume throughout that $f \in K_C(E)$ is strongly $\partial$-extreme.

  In the first case, let $n = \dim E$ and let $X \subseteq \partial B(E^*)$ be the union of all faces of $\partial B(E^*)$ that are simplexes.
  Then $\partial B(E^*) \setminus X$ consists of the relative interiors of some faces of dimension $n-1$, so $X$ is closed.
  On each face that is a simplex, $f^*$ is affine, and since it satisfies the antipode identity at the extreme points in satisfies it throughout $X$.
  By \autoref{lem:ConvexKatetovConjugate}\autoref{item:ConvexKatetovAntipodeIdentity}, $f^*$, as a function on $B(E^*)$, is continuous at every $\lambda \in X$.
  It follows by \autoref{lem:ConvexBoundaryContinuity} that the restriction of $f^*$ to any face (be it a simplex or not) is continuous.
  Thus, if $\lambda_k \in \partial B(E^*)$, $\lambda_k \rightarrow \lambda$, then either $\lambda$ belongs to the relative interior of some face of dimension $n-1$ or else belongs to $X$, and in any case $f^*(\lambda_k) \rightarrow f^*(\lambda)$.

  In the first special case $X = \partial B(E^*)$ and the characterisation of $\bG[E]$ follows from \autoref{lem:ConvexKatetovConjugate}\autoref{item:ConvexKatetovConjugateHB}.
  The second special case is clear.

  If $E$ is polyhedral, one prove by induction on $m$, using \autoref{lem:ConvexBoundaryContinuity}, that $f^*$ is continuous on each face of dimension $m$ and therefore on the union of all such faces.
  For $m=n-1$, this means that $f^*$ is continuous on $\partial B(E^*)$.
\end{proof}

Notice that \autoref{prp:IsolatedTypesDimensionOne} fits all cases mentioned in \autoref{thm:IsolatedTypesFiniteDimension}.
More generally, the case where $E = \ell_\infty(n)$, namely $\bR^n$ equipped with the supremum norm, fits cases \autoref{item:IsolatedTypesFiniteDimensionSimplexAll} and \autoref{item:IsolatedTypesFiniteDimensionPolyhedral}.
We next use this to show that there are infinitely many distinct orbits in the action of $\Aut(\bG) \curvearrowright \partial B(\bG)$ (previously we only knew there were at least two, since there are both smooth and non-smooth points).

\begin{cor}
  \label{cor:DistinctOrbits}
  For each $n \in \bN$ there exists $v \in \partial B(\bG)$ such that $N(v) = \bigl\{ \lambda \in \partial B(\bG^*) : \lambda v = 1 \bigr\}$ is a simplex of dimension $n$.
  Consequently, the action $\Aut(\bG) \curvearrowright \partial B(\bG)$ admits infinitely many distinct orbits.
\end{cor}
\begin{proof}
  By \autoref{thm:IsolatedTypesFiniteDimension}\autoref{item:IsolatedTypesFiniteDimensionSimplexAll}, $\bG\bigl[\ell_\infty(n+1))\bigr]$ exists, and letting $v = (1,1,\ldots) \in \ell(n+1)$, the set $N(v)$ is a simplex of dimension $n$.

  Say that two convex sets are isomorphic if there exists a homeomorphism between them respecting convex combinations.
  Then the isomorphism type of $N(v)$ is invariant under the action of $\Aut(\bG)$, whence the existence of distinct orbits.
\end{proof}



Let us now give some examples in which the conclusion of \autoref{thm:IsolatedTypesFiniteDimension} fails.
This will show that while \autoref{thm:IsolatedTypesFiniteDimension} is not necessarily optimal, none of the hypotheses can be simply done away with.

\begin{exm}
  \label{exm:IsolatedTypesDimension4}
  We construct an example of a space $E$ of dimension $4$, such that $\bG[E]$ does not exist.
  Let $B_0 = \{\pm 1\}^4 \subseteq \bR^4$, $B_1 = \{ (x,y,0,0) : x^2 + y^2 = 2 \text{ and } x,y \neq \pm 1 \}$ and $B = \co(B_0 \cup B_1)$.
  Then $B$ is a compact symmetric convex neighbourhood of $0$, so we may take $E^* = \bR^4$ with $B(E^*) = B$.
  Moreover, the set of extreme points in $B$ is exactly $B_0 \cup B_1$.
  It follows by \autoref{cor:IsolatedTypeNecessaryCondition} that if $f \in K_C(E)$ is isolated, then $f^*$ satisfies the antipode identity also at the four points $(\pm 1, \pm 1, 0,0)$ (which are not extreme).

  Let us construct a special $f \in K_C(E)$ by constructing $f^*$.
  At a point $(x,y,z,w) \in B_0$ we let $f^*(x,y,z,w) = xyz$, on $B_1$ we let $f^*$ vanish, and the define $f^*$ on $B$ as the generated closed convex function.
  This function satisfies the antipode inequality and therefore is indeed of the form $f^*$ for some $f \in K_C(E)$.
  In addition, a direct calculation reveals that $f^*(\pm 1, \pm 1, 0,0) = -1$.

  Fix $\varepsilon > 0$ ($\varepsilon = 1/2$ will do).
  At each point $\lambda \in B_0$ there is some $v_\lambda \in E$ such that $f^*(\lambda) < \lambda v_\lambda - f(v_\lambda) + \varepsilon$.
  Let $U \subseteq K_C(E)$ consist of all $g$ such that $|g(v_\lambda) - f(v_\lambda)| < \varepsilon$ for all $\lambda \in B_0$.
  Then $U$ is a neighbourhood of $f$, and if $g \in U$, then at $\lambda \in B_0$ we have
  \begin{gather*}
    g^*(\lambda)
    \geq \lambda v_\lambda - g(v_\lambda)
    > \lambda v_\lambda - f(v_\lambda) - \varepsilon
    > f^*(\lambda) - 2\varepsilon,
  \end{gather*}
  and therefore
  \begin{gather*}
    g^*(\lambda)
    \leq -g(-\lambda)
    < -f^*(-\lambda) + 2\varepsilon
    = f^*(\lambda) + 2\varepsilon.
  \end{gather*}
  Thus $g^* < f^* + 2\varepsilon$ throughout the unit cube, and in particular $g^*(\pm 1, \pm 1, 0, 0) < 2\varepsilon - 1$.
  Thus, for $\varepsilon = 1/2$ or less, the antipode identity fails at $(\pm 1, \pm 1,0,0)$ for every $g \in U$, so $U$ contains no isolated points.
\end{exm}

If we want counter-examples consisting of smooth spaces we need to move to infinite dimension.
In fact, we obtain a plethora of examples over which there are no isolated types other than the obvious ones.

\begin{prp}
  \label{prp:IsolatedTypesNone}
  Let $E$ be a Banach space such that $\overline{\cE(B(E^*))} \nsubseteq \partial B(E^*)$.
  Then the only isolated types over $E$ are the realised ones.

  This holds in particular when $E = c_0$, $E^* = \ell_1$ or $E = \ell_p$, $E^* = \ell_q$ with $1 < p,q$, $\frac{1}{p} + \frac{1}{q} = 1$, since $\lambda_i \rightarrow 0$, where $\lambda_i$ consists of a single $1$ at position $i$ and $0$ elsewhere.
\end{prp}
\begin{proof}
  Indeed, let $f \in K_C(E)$ be isolated.
  By \autoref{cor:IsolatedTypeNecessaryCondition}, $f^*$ satisfies the antipode identity on $\cE(B(E^*))$ and therefore (by continuity of $f^*$) at some non-boundary point.
  By \autoref{lem:AntipodeIdentityRealisedType}, $f$ is realised.
\end{proof}

Over $E = \bG$ the isolated types are again exactly the realised ones, but in this specific case they are dense and $\bG[E] = E$.

\begin{qst}
  Is there any infinite-dimensional $E$ other than $\bG$ over which the isolated types are dense?
  Is there any infinite-dimensional $E$ over which there are unrealised isolated types?
  Specifically, what happens in the case $E = \ell_1$, $E^* = \ell_\infty$, to which \autoref{prp:IsolatedTypesNone} does not apply?
\end{qst}

\section{Counting types}
\label{sec:CountingTypes}

We conclude with a calculation of the size of the type-space over a separable Banach space $E$.
By ``size'' we mean here the metric density character (since the cardinal $|\tS_n(E)|$ is the continuum as soon as $n > 0$ and $E \neq 0$).

\begin{thm}
  \label{thm:CountingTypes}
  Let $E$ be a separable Banach space.
  \begin{enumerate}
  \item \label{item:CountingTypesFew}
    If $E$ is finite-dimensional and polyhedral, then $\tS_n(E)$ is metrically separable.
  \item \label{item:CountingTypesMany}
    Otherwise, $\tS_n(E)$ has metric density character equal to the continuum for every $n \geq 1$.
  \end{enumerate}
\end{thm}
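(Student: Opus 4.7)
My plan begins with the easy upper bound: for any separable $E$, a semi-norm on the separable space $E \oplus \bR^n$ is determined by its values on a countable dense subset, so $|\tS_n(E)| \leq \mathfrak c$, and the metric density character of $\tS_n(E)$ is automatically at most $\mathfrak c$ in both cases. For \autoref{item:CountingTypesFew}, I would exhibit a countable dense subset of $\tS_n(E)$ consisting of semi-norms on $E(n)$ with polyhedral unit ball defined by rational data extending the polyhedral norm on $E$ (whose dual $E^*_{=1}$ has only finitely many extreme points $\pm\lambda_1,\ldots,\pm\lambda_k$). Denseness follows from the standard polytope approximation of convex bodies, with attention to the fact that the supremum in \autoref{prp:TypeDistance} is taken over the unbounded slab $E(n)_1$: the finiteness of extreme functionals of $E$ controls the asymptotic behaviour of $\|a + \sum\lambda_i x_i\|^\xi$ as $\|a\| \to \infty$ by finitely many directional profiles, effectively reducing the supremum to a compact-set calculation.

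For \autoref{item:CountingTypesMany}, it suffices to construct $\mathfrak c$ many types in $\tS_1(E)$ pairwise at metric distance $\geq c > 0$, since the variable-restriction map $\tS_n(E) \twoheadrightarrow \tS_1(E)$ is continuous and metrically non-expanding. When $E$ is infinite-dimensional, fix, by Mazur, a normalized basic sequence $(e_n) \subseteq E$ of basis constant $K$, so that the coordinate functionals satisfy $\|e_k^*\| \leq 2K$ and hence $d(e_k, \overline{\operatorname{span}}\{e_n : n \neq k\}) \geq 1/(2K)$. For each $A \subseteq \bN$, set $C_A = \overline{\operatorname{conv}}\{e_n : n \in A\}$ and define
\begin{gather*}
  f_A(v) = \max\bigl(\|v\|,\, d(v,C_A) + 1\bigr);
\end{gather*}
this lies in $K_C(E)$ because $d(\cdot,C_A)$ is convex and $1$-Lipschitz, and $f_A \geq \|\cdot\|$ gives the Katětov inequality $f_A(v) + f_A(u) \geq \|v\| + \|u\| \geq \|v - u\|$. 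For $k \in A \setminus B$, $f_A(e_k) = \max(1,1) = 1$ while $f_B(e_k) \geq 1 + 1/(2K)$, so the resulting types $\xi_A$ form a $\mathfrak c$-sized family with pairwise distances $\geq 1/(2K)$.

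When $E$ is finite-dimensional and non-polyhedral, the smooth points of $E_{=1}$ form a dense $G_\delta$ of cardinality $\mathfrak c$, and the map $u \mapsto \lambda_u$ assigning each smooth unit vector its unique norming functional is injective; I pick a continuum $\{\lambda_t : t \in T\} \subseteq E^*_{=1}$ with distinct uniquely-norming vectors $u_t \in E_{=1}$, and define
\begin{gather*}
  f_t(v) = \max(\|v\|, \lambda_t(v) + 1),
\end{gather*}
again a convex Katětov function. For $t \neq s$, uniqueness of norming gives $\lambda_s(u_t) < 1$, and evaluation at $v = r u_t$ for $r \geq 1/(1 - \lambda_s(u_t))$ yields $f_t(r u_t) = r + 1$ while $f_s(r u_t) = r$, so $d(\xi_t, \xi_s) \geq 1$. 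The main obstacle is \autoref{item:CountingTypesFew}: one must justify that the supremum in the type metric over the unbounded slab $E(n)_1$ reduces, thanks to the finitely many extreme functionals of the polyhedral norm, to a compact-set approximation where rational polyhedral semi-norms suffice; this asymptotic control, present for polyhedral finite-dimensional $E$ and absent otherwise, is exactly the dichotomy separating the two parts of the theorem.
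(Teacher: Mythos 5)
Your framework for part~\ref{item:CountingTypesMany} --- reducing to $\tS_1(E)$ via the non-expanding restriction map and working in $K_C(E)$ --- is sound, and your infinite-dimensional construction with a normalized basic sequence $(e_n)$ and the functions $f_A = \max(\|\cdot\|, d(\cdot, C_A)+1)$ is correct and genuinely more elementary than the paper's route. The paper instead applies a single result of Lindenstrauss (there is a sequence $\{v_n\}$ with $\|v_n \pm v_m\| \leq \|v_n\|+\|v_m\|-1$ whenever $E$ fails to be finite-dimensional polyhedral) which uniformly handles both the infinite-dimensional and the finite-dimensional non-polyhedral cases; you have avoided that black box in one of the two sub-cases.

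However, your finite-dimensional non-polyhedral argument has a genuine gap. You assert that the norming-functional map $u \mapsto \lambda_u$ on smooth unit vectors is injective, and you then need a continuum of distinct $\lambda_t$'s each of which is the unique norming functional of some smooth $u_t$. The injectivity claim is simply false (already in $\ell_1^2$, every smooth point $(t,1-t)$ with $0<t<1$ has the same norming functional), and the weaker claim that one can extract a \emph{continuum} of such $\lambda$'s can also fail for non-polyhedral finite-dimensional $E$: take $E=\bR^2$ with unit ball the closed convex hull of a symmetric sequence of points on the unit circle accumulating at $\pm(1,0)$. This ball is not a polytope (it has infinitely many extreme points, so $E$ is non-polyhedral), yet its boundary consists of countably many open edges together with countably many vertices; the smooth points are exactly the edge interiors, and each edge contributes a single norming functional, so only \emph{countably} many $\lambda$ norm a smooth $u$. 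Your construction cannot get started here. Repairing it requires a different mechanism that exploits the accumulating vertices rather than smooth points --- essentially what Lindenstrauss's theorem provides, and this is precisely why the paper cites it.

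For part~\ref{item:CountingTypesFew} your proposal is only a sketch: you correctly flag that the supremum in \autoref{prp:TypeDistance} ranges over the unbounded slab $E(n)_1$ and that the finitely many extreme functionals of the polyhedral norm should control the asymptotics, but you do not carry out the rational-polyhedral approximation or verify the reduction to a compact set. The paper instead cites Melleray's separability of $K(E)$ for finite-dimensional polyhedral $E$ and passes from $1$-types to $n$-types as in the proof of \autoref{lem:IsolatedTypeOneVariable}, so your intended route is different but left incomplete exactly where the difficulty lies.
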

\begin{proof}
  Assume first that $E$ is finite-dimensional and polyhedral.
  Then by Melleray \cite[Remarks following Corollary~4.6]{Melleray:UrysohnGeometry}, the space $K(E)$ is separable, and \textit{a fortiori} so is $\tS_1(E) = K_C(E)$.
  The passage from $1$-types to $n$-types is done as in the proof of \autoref{lem:IsolatedTypeOneVariable}, and is left to the reader.

  Now assume that $E$ is not both finite-dimensional and polyhedral.
  Then by Lindenstrauss \cite[Theorem~7.7]{Lindenstrauss:ExtensionOfCompactOperators} there exists a sequence $\{v_n\} \subseteq E$ such that for any $n \neq m$ and choice of signs:
  \begin{gather*}
    \|v_n \pm v_m\| \leq \|v_n\| + \|v_m\| - 1.
  \end{gather*}
  Embed $E$ (isometrically) in $\ell_\infty$, and for a sequence $\bar \varepsilon \in \{\pm 1\}^\bN$, consider the family of closed balls $\overline{B}(\varepsilon_n v_n, \|v_n\| - \half)$.
  By hypothesis every two such balls intersect at a non empty set, and therefore there exists $v \in \ell_\infty$ that belongs to them all.
  In other words, there exists $\xi_{\bar \varepsilon} = \tp(v/E) \in \tS_1(E)$ such that $\|x - \varepsilon_n v_n\|^{\xi_{\bar \varepsilon}} \leq \|v_n\| - \half$.
  If $\bar \varepsilon \neq \bar \varepsilon'$, then $d(\xi_{\bar \varepsilon},\xi_{\bar \varepsilon'}) \geq 1$, so the density character of $\tS_1(E)$ is at least the continuum.
  The same holds \textit{a fortiori} for $\tS_n(E)$, $n \geq 1$.
\end{proof}

\begin{rmk}
  Lindenstrauss's argument is quite elementary and yields a quick proof for \autoref{thm:CountingTypes}\ref{item:CountingTypesMany} that does not depend on the machinery developed in earlier sections.
  An argument closer to the spirit of the present paper can also be given.

  Let $\Xi$ be the set of lower semi-continuous functions $f_0\colon \cE(B(E^*)) \rightarrow \bR$ that satisfy in addition $f_0(\lambda) + f_0(-\lambda) \leq 0$.
  Then $E$ is not a finite-dimensional polyhedral space if and only if $\cE(B(E^*))$ is infinite, in which case $\Xi$ has density character continuum.
  If $f_0 \in \Xi$ and $f = f_0^*$ as in \autoref{lem:GeneratedConvexFunction}, then $f^*\rest_{\cE(B(E^*))} = f_0$ and $f^*(\lambda) + f^*(-\lambda) \leq 0$ throughout $B(E^*)$, so $f \in K_C(E)$ and we are done.
  Notice that this argument has the advantage of treating the two cases of ``finite-dimensional, non polyhedral'' and ``infinite-dimensional'' in the same manner, while the proof of \cite[Theorem~7.7]{Lindenstrauss:ExtensionOfCompactOperators} treats them separately, with the second one being significantly more involved.
\end{rmk}

\autoref{thm:CountingTypes}\autoref{item:CountingTypesMany} answers Problem 2 of Avilés et al.\ \cite[Section~4]{Aviles-CabelloSanchez-Castillo-Gonzalez-Moreno:UniversalDisposition} in the negative (and we thank Wiesław \textsc{Kubiś} for having pointed this out to us).
They say that a Banach space $G$ is \emph{of universal disposition for finite-dimensional spaces} if it satisfies a strengthening of \autoref{dfn:Gurarij} with $\psi$ being an isometry.

\begin{cor}
  The density character of any space of universal disposition for finite-dimensional spaces is at least the continuum.
  In other words, the answer to Problem 2 of \cite[Section~4]{Aviles-CabelloSanchez-Castillo-Gonzalez-Moreno:UniversalDisposition} is negative.
\end{cor}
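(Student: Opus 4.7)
The plan is to use \autoref{thm:CountingTypes} to produce many metrically separated types over a finite-dimensional subspace of $G$, then realize those types in $G$ via universal disposition. First, applying universal disposition to the inclusion $0 \subseteq F$ and the trivial embedding $0 \hookrightarrow G$ shows that every finite-dimensional Banach space $F$ embeds isometrically in $G$. In particular, fix an isometric copy $E \subseteq G$ of a finite-dimensional non-polyhedral space such as $\ell_2^2$.

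Since $E$ is not polyhedral, \autoref{thm:CountingTypes} says that $\tS_1(E)$ has metric density character equal to the continuum. A standard pigeonhole argument then yields some $\delta > 0$ and a family $\{\xi_\alpha : \alpha < 2^{\aleph_0}\} \subseteq \tS_1(E)$ with pairwise distances at least $\delta$: for each $n \in \bN$ pick a maximal $(1/n)$-separated subset $D_n \subseteq \tS_1(E)$, which is automatically $(1/n)$-dense, so that $\bigcup_n D_n$ is dense. If every $D_n$ had cardinality less than $2^{\aleph_0}$, then since $2^{\aleph_0}$ has uncountable cofinality (by König's theorem), the union would have cardinality less than $2^{\aleph_0}$, contradicting the density character; hence some $D_n$ has cardinality continuum, and we set $\delta = 1/n$ and enumerate $D_n$ as $\{\xi_\alpha\}$.

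For each $\xi_\alpha$, the space $E[\xi_\alpha]$ is finite-dimensional (of dimension at most $\dim E + 1$), so universal disposition applied to the inclusion $E \subseteq E[\xi_\alpha]$ and the given embedding $E \hookrightarrow G$ produces an isometric embedding $\psi_\alpha \colon E[\xi_\alpha] \to G$ extending $\id_E$. Setting $a_\alpha = \psi_\alpha(x)$, one has $\tp(a_\alpha/E) = \xi_\alpha$. Since realizations in a common ambient Banach space upper-bound the type-distance, $\|a_\alpha - a_\beta\|_G \geq d(\xi_\alpha,\xi_\beta) \geq \delta$ whenever $\alpha \neq \beta$, so $\{a_\alpha\}_{\alpha < 2^{\aleph_0}}$ is a $\delta$-separated subset of $G$ of cardinality continuum, forcing the density character of $G$ to be at least $2^{\aleph_0}$. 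The only non-routine step is the pigeonhole extraction of a uniformly separated family of types from bare density-character information; everything else is a direct application of results already established.
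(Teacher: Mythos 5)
Your proof is correct and takes essentially the same approach as the paper: embed a finite-dimensional non-polyhedral $E$ (the Euclidean plane $\ell_2^2$) in $G$, invoke \autoref{thm:CountingTypes} to get continuum-many types over $E$, and realize them in $G$ by universal disposition. The only difference is that you extract a single uniformly $\delta$-separated family via a cofinality/pigeonhole argument, whereas the paper more directly observes that since every type over $E$ is realized in $G$ and $a \mapsto \tp(a/E)$ is $1$-Lipschitz, the density character of $G$ is at least that of $\tS_1(E)$ --- this makes the pigeonhole step unnecessary, though your version is also valid.
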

\begin{proof}
  Assume that $G$ is of universal disposition for finite-dimensional spaces.
  Then the Euclidean plane $E$ embeds isometrically in $G$, and all types over $E$ are realised in $G$, so the density character of $G$ must be at least the metric density character of $S_1(E)$, namely the continuum.
\end{proof}

On the other hand, say that a Gurarij space $G$ is \emph{strongly $\aleph_1$-homogeneous} if the following stronger version of \autoref{cor:HomogeneousGurarij} holds in $G$:
\begin{quote}
  For every separable $F \subseteq G$ and isometric embedding $\varphi\colon F \rightarrow G$ there exists an isometric automorphism $\psi \in \Aut(G)$ extending $\varphi$.
\end{quote}
Clearly, a strongly $\aleph_1$-homogeneous Gurarij space is of universal disposition for finite-dimensional (and even separable) spaces.
Moreover, there does exist such a space of density character the continuum.
This is merely a special case of a general model theoretic result: for any cardinal $\kappa$ and structure $\bM$ of density character $\leq 2^\kappa$, in a language of cardinal $\leq \kappa$, there exists an elementary extension $\bM' \succeq \bM$ of density character still $\leq 2^\kappa$, which is moreover $\kappa^+$-saturated and strongly $\kappa^+$-homogeneous.
Apply this to $\bM = \bG$ and $\kappa = \aleph_0$.

\newcommand{\etalchar}[1]{$^{#1}$}
\providecommand{\bysame}{\leavevmode\hbox to3em{\hrulefill}\thinspace}

\end{document}